\newtheorem{theorem}{Theorem}
\newtheorem{prop}{Proposition}
\newtheorem{lemma}{Lemma}
\newtheorem{cor}{Corollary}
\theoremstyle{definition}
\newtheorem{defn}{Definition}
\newtheorem{remark}{Remark}
\newtheorem{notation}{Notation}
\def\A{\mathcal A}
\def\m{\mathcal M}
\def\N{\mathbb N}
\def\R{\mathbb R}
\def\phi{\varphi}
\def\rho{\varrho}
\def\epsilon{\varepsilon}
\def\mm{\mathfrak C}
\def\nn{\mathfrak D}
\def\ee{\mathfrak E}
\def\ss{\mathcal I}
\def\t{\mathcal T}
\def\sm{\mathcal S}
\def\na{\mathbb M}
\begin{document}

\newcounter{algnum}
\newcounter{step}
\newtheorem{alg}{Algorithm}

\newenvironment{algorithm}{\begin{alg}\end{alg}}

\title[Joint spectral radius, Sturmian measures, finiteness conjecture]
{Joint spectral radius, Sturmian measures, and the finiteness conjecture}

\author{O.~Jenkinson \& M. Pollicott}

\address{Oliver Jenkinson;
School of Mathematical Sciences, Queen
  Mary, University of London, Mile End Road, London, E1 4NS, UK.
\newline {\tt omj@maths.qmul.ac.uk}}

\address{Mark Pollicott;
Mathematics Institute,
University of Warwick,
Coventry, CV4 7AL, UK.
\newline {\tt mpollic@maths.warwick.ac.uk}}

\subjclass[2010]{Primary 15A18, 15A60; Secondary 37A99, 37B10, 68R15}

\begin{abstract}
The joint spectral radius of a pair of $2 \times 2$ real matrices $(A_0,A_1)\in M_2(\mathbb{R})^2$
is defined  to be
$r(A_0,A_1)= \limsup_{n\to\infty} \max \{\|A_{i_1}\cdots A_{i_n}\|^{1/n}: i_j\in\{0,1\}\}$,
the optimal growth rate of  the norm of products of these matrices.

The Lagarias-Wang finiteness conjecture \cite{lagariaswang}, asserting that $r(A_0,A_1)$ is
always the $n$th root of the spectral radius of some length-$n$ product $A_{i_1}\cdots A_{i_n}$, has been refuted by
Bousch \& Mairesse \cite{bouschmairesse}, with subsequent counterexamples presented by
Blondel, Theys \& Vladimirov \cite{btv}, Kozyakin \cite{kozyakin},  Hare, Morris, Sidorov \& Theys
\cite{hmst}.

In this article we introduce a new approach to generating finiteness counterexamples, 
and use this to exhibit an open subset of $M_2(\mathbb{R})^2$ with the property that each member $(A_0,A_1)$ of the subset generates 
uncountably many counterexamples of the form $(A_0,tA_1)$.
Our methods employ ergodic theory, in particular the analysis of Sturmian invariant measures; 
 this approach allows a short proof 
that the relation between the parameter $t$ and the Sturmian parameter $\mathcal{P}(t)$ is a devil's staircase.
\end{abstract}

\maketitle

\section{Introduction}\label{generalsection}

\subsection{Problem and setting}\label{problemsetting}

For a square matrix $A$ with real entries, its \emph{spectral radius} $r(A)$, defined as the maximum modulus of its eigenvalues, satisfies \emph{Gelfand's formula}
$$
r(A)= \lim_{n\to\infty} \|A^n\|^{1/n}\,,
$$
where $\|\cdot\|$ is a matrix norm.
More generally, for a finite collection $\A=\{A_0,\ldots, A_l\}$ of real square matrices, all of the same size,
the \emph{joint spectral radius} $r(\A)$ is defined by
\begin{equation}\label{jsrlimsup}
r(\A) = 
\limsup_{n\to\infty} \max \{\|A_{i_1}\cdots A_{i_n}\|^{1/n}: i_j\in\{0,\ldots, l\}\} \,,
\end{equation}
or equivalently (see e.g.~\cite{jungers}) by
\begin{equation}\label{rjsr}
r(\A) = 
 \lim_{n \to +\infty} \max
\{  r(A_{i_1}\cdots A_{i_n})^{1/n}: i_j\in\{0, \dots,l\}\} \,.
\end{equation}
The notion of joint spectral radius was introduced by Rota \& Strang \cite{rotastrang},
and notably popularised by Daubechies \& Lagarias \cite{daubechieslagarias} in their work on wavelets.
Since the 1990s it has become an area of very active research interest, from both a pure and an applied perspective
(see e.g.~\cite{blondel, jungers, kozyakinbiblio, strang}).

The set $\A$ is said to have the \emph{finiteness property} if $r(\A)=r(A_{i_1}\cdots A_{i_n})^{1/n}$
for some $i_1,\ldots, i_n\in\{0,\ldots, l\}$.
It was conjectured by Lagarias \& Wang \cite{lagariaswang}
(see also Gurvits \cite{gurvits}) that every such $\A$ enjoys the finiteness property.
This so-called finiteness conjecture was, however, refuted by Bousch \& Mairesse \cite{bouschmairesse}, and a number of authors (see \cite{btv,hmst, kozyakin, morrissidorov}) have subsequently given examples of sets $\A$ for which the finiteness property fails.
A common feature of these finiteness counterexamples has been a judicious choice of a pair of $2 \times 2$ matrices $A_0,A_1$, followed by an argument that for certain $t>0$, the finiteness property fails for the set $\A(t)= \{ A_0^{(t)}, A_1^{(t)}\} = \{A_0, tA_1\}$. 

In fact for many of these examples it has been observed that the family $(\A(t))_{t>0}$ can be associated with the class of \emph{Sturmian} sequences of Morse \& Hedlund \cite{morsehedlund}: 
for a given $t>0$ an appropriate Sturmian sequence $(i_n)_{n=1}^\infty \in\{0,1\}^\N$ turns out to give the optimal matrix product, in the sense that the joint spectral radius $r(\A(t))$ equals
$\lim_{n\to\infty} r(A_{i_1}^{(t)}\cdots A_{i_n}^{(t)})^{1/n}$ 
(see \cite{btv,bouschmairesse,hmst,kozyakin,morrissidorov} for further details).
A Sturmian sequence $(i_n)_{n=1}^\infty$ has a well-defined \emph{1-frequency}
$\mathcal{P} = \lim_{N\to\infty} \frac{1}{N} \sum_{n=1}^N i_n$, and it is those sets $\A(t)$ whose associated Sturmian sequences\footnote{We follow the definition of Sturmian sequence given in \cite{bullettsentenac}, though note that
some authors refer to these as \emph{balanced} sequences,
reserving the nomenclature \emph{Sturmian} precisely for those balanced sequences with irrational 1-frequency.}
 have \emph{irrational} 1-frequency which yield counterexamples to the finiteness conjecture
 (see Proposition \ref{irrationalcounterexampleconnection} below for a more precise description of the connection between finiteness counterexamples and Sturmian sequences with irrational 1-frequency).
For certain such families $(\A(t))_{t>0}$ (which henceforth we refer to as \emph{Sturmian families}), it has been proved by Morris \& Sidorov \cite{morrissidorov} 
(see also \cite[p.~109]{bouschmairesse})
that
if $\mathcal{P}(t)$ denotes the 1-frequency associated 
to $\A(t)$, then the parameter
mapping $t\mapsto \mathcal{P}(t)$ is continuous and monotone, but \emph{singular}
in the sense that 
$\{t>0:\mathcal{P}(t)\notin \mathbb{Q}\}$ is nowhere dense; in other words, 
the uncountably many parameters
$t$ for which finiteness counterexamples occur only constitute a thin subset\footnote{The belief that finiteness counterexamples are rare appears to be widespread; for example Maesumi \cite{maesumi} conjectures
that they consitute a set of (Lebesgue) measure zero in the space of matrices.} 
of $\mathbb{R}^+$.

Examples of Sturmian families $(\A(t))_{t>0}$
have been given by
Bousch \& Mairesse \cite{bouschmairesse}, who considered the family
generated by matrix pairs of the form
\begin{equation}\label{bmfamily}
\A= \left( \begin{pmatrix} e^{\kappa h_0}+1 & 0 \cr e^\kappa & 1 \cr \end{pmatrix}\, , \
\begin{pmatrix} 1 & e^\kappa \cr 0 & e^{\kappa h_1} +1 \cr \end{pmatrix} \right)\,,\,
\kappa>0\,, h_0,h_1>0\,, h_0+h_1 < 2\,,
\end{equation}
by Kozyakin \cite{kozyakin}, who studied the family
generated by pairs of the form
\begin{equation}\label{kozyakinpairs}
\A = \left(  \begin{pmatrix} 1 & 0 \cr c & d \cr \end{pmatrix}\, , \
 \begin{pmatrix} a & b \cr 0 & 1 \cr \end{pmatrix}  \right) \,,\,
 0<a,d<1\le bc\,,
\end{equation}
and by various authors \cite{btv,hmst,morrissidorov} focusing on the family generated by
the particular pair 
\begin{equation}\label{standardpair}
\A = \left(  \begin{pmatrix} 1 & 0 \cr 1 & 1 \cr \end{pmatrix}\, , \
 \begin{pmatrix} 1 & 1 \cr 0 & 1 \cr \end{pmatrix}  \right) \,.
\end{equation}

 
 For an invertible matrix $P$, 
 the simultaneous similarity $(A_0,A_1)\mapsto (P^{-1}A_0P, P^{-1}A_1P)$
leaves invariant the joint spectral radius, 
and does not change the sequences $(i_n)_{n=1}^\infty$ attaining the optimal matrix product,
while if $u,v>0$ then $(uA_0,vA_1)$ 
has the same optimizing sequences as $(A_0, (v/u)A_1)$.
Therefore, declaring $\A=(A_0,A_1)$ and $\A'=(A_0',A_1')$ to be equivalent if
$A_0'=uP^{-1}A_0P$ and $A_1'=vP^{-1}A_1P$ for some invertible $P$ and $u,v>0$, we see that the
equivalence of $\A$ and $\A'$ implies that $(\A(t))_{t>0}$ is a Sturmian family if and only if
$(\A'(t))_{t>0}$ is.
In particular, $(\A'(t))_{t>0}$ is a Sturmian family whenever $\A'$ is equivalent to a matrix pair $\A$ of the form
(\ref{bmfamily}), (\ref{kozyakinpairs}), or (\ref{standardpair}).

The purpose of this article is to introduce an approach to studying the joint spectral radius and
generating finiteness counterexamples, which in particular yields new examples of Sturmian
families $(\A(t))_{t>0}$, i.e.~where $\A$ is not equivalent to a matrix pair of the form
(\ref{bmfamily}), (\ref{kozyakinpairs}), or (\ref{standardpair}).
 Our method is conceptually different to previous authors, employing notions from
 dynamical systems, ergodic theory, and in particular ergodic optimization (see e.g.~\cite{jeo}).
Specifically, we identify a dynamical system $T_\A$ with the matrix pair $\A=(A_0,A_1)$,
and cast the problem of determining the joint spectral radius $r(\A)$ in terms of ergodic optimization
(see Theorem \ref{maxtheoremintro} below): it suffices to determine the
$T_\A$-invariant probability measure which maximizes the integral of a certain auxiliary real-valued function $f_\A$.
Working with the family of \emph{$\A$-Sturmian measures} (certain probability measures
invariant under $T_\A$) instead of Sturmian sequences, we exploit a characterisation of these measures in terms of the smallness of their support to show that they give precisely the family
of $f_{\A(t)}$-maximizing measures, $t>0$.
In particular, whenever the $f_{\A(t)}$-maximizing measure is Sturmian of \emph{irrational} parameter
$\mathcal{P}(t)$ then $\A(t)$ is a finiteness counterexample (cf.~Proposition \ref{irrationalcounterexampleconnection}).

The $\A$-Sturmian measures are naturally identified with Sturmian measures on $\Omega = \{0,1\}^\N$, the full shift on two symbols (see Notation \ref{sturmianfullshift}). A notable feature of our approach is that the singularity of the parameter mapping
$t\mapsto \mathcal{P}(t)$ (and in particular the fact that $\{t>0:\mathcal{P}(t)\notin\mathbb{Q}\}$ is nowhere dense in $\mathbb{R}^+$)
is then readily deduced (see Theorem \ref{deviltheorem} in  \S \ref{devilsection}) as a consequence of classical facts about 
parameter dependence of Sturmian measures on $\Omega$
(i.e.~rather than requiring the {\it ab initio} approach of \cite{morrissidorov}).

\subsection{Statement of results}\label{statementsubsection}

We use $M_2(\mathbb{R})$ to denote the set of real $2 \times 2$ matrices,
and focus attention on certain of its open subsets:

\begin{notation} 
$M_2(\mathbb{R}^+)$  will denote the set of \emph{positive matrices}, i.e.~matrices in $M_2(\mathbb{R})$
with entries in $\mathbb{R}^+ =\{x\in\mathbb{R}:x>0\}$, 
and
 $M_2^+(\mathbb{R}^+)=\{A\in M_2(\mathbb{R}^+):\det A>0\}$
 will denote the set of \emph{positive orientation-preserving matrices}.
 \end{notation}

Turning to \emph{pairs} of matrices, we shall consider the following open subset
of $M_2^+(\mathbb{R}^+)^2$:

\begin{defn}\label{frakmdefn}
Let $\mm\subset M_2^+(\mathbb{R}^+)^2$ denote the set of matrix pairs
\begin{gather*}
\left( A_0, A_1\right)
= \left( \begin{pmatrix} a_0&b_0\\c_0&d_0\end{pmatrix} ,
\begin{pmatrix} a_1&b_1\\ c_1& d_1\end{pmatrix} \right) \in M_2^+(\mathbb{R}^+)^2
\end{gather*}
satisfying 
\begin{equation}\label{definequal1}
\frac{a_0}{c_0}  <  \frac{b_1}{d_1}
\end{equation}
and 
\begin{equation}\label{definequal2}
a_1+c_1-b_1-d_1 < 0 < a_0+c_0-b_0-d_0\,.
\end{equation}
For reasons which will become apparent later (see Proposition \ref{concavconvexprop}),
$\mm$ will be referred to as the set of \emph{concave-convex matrix pairs}.
\end{defn}

Finally, our counterexamples to the Lagarias-Wang finiteness conjecture will be drawn from 
a certain open subset  $\nn$
(given by Definition \ref{defnofnn} below) of $\mm$
which is conveniently described in terms of quantities $\rho_A$ and $\sigma_A$ defined as follows:

\begin{defn}
For $A= \begin{pmatrix} a&b\\c&d\end{pmatrix} \in M_2^+(\mathbb{R}^+)$,
define
$$
\rho_A = \frac{2b}{a-d-2b+\sqrt{(a-d)^2+4bc}}\,,
$$
and if $a+c\neq b+d$ then define
$$
\sigma_A = \frac{b-a}{a+c-b-d}\,.
$$
\end{defn}

It turns out (see Corollary \ref{rhoposlessminusone}) that if
$(A_0,A_1)\in\mm$
then
$\sigma_{A_0}<0<\rho_{A_0}$ and $\rho_{A_1}<-1$.
The set $\nn$ is defined by imposing two inequalities:

\begin{defn}\label{defnofnn}
Define 
\begin{equation*}
\nn
=
\left\{\, (A_0,A_1)\in\mm : \
\rho_{A_1} < \sigma_{A_0}
\ \text{and}\
\sigma_{A_1} < \rho_{A_0} \right\}\,.
\end{equation*}
\end{defn}

Clearly $\nn$ is an open subset of $\mm$, hence also of $M_2(\mathbb{R})^2$.
It is also non-empty: for example 
it is readily verified that the two-parameter family
\begin{equation}\label{2dimreg}
\nn_2 = \left\{
\left( \begin{pmatrix} 1&b\\c&1\end{pmatrix} ,
\begin{pmatrix} 1& c\\  b & 1\end{pmatrix} \right)
: \
bc < 1 < c \ \ , \ \ (b,c)\in (\mathbb{R}^+)^2
\right\}
\end{equation}
is a subset of $\nn$.
%
%
%
Note that the pair (\ref{standardpair})
studied in \cite{btv, hmst,morrissidorov}, and
corresponding to $(b,c)=(0,1)$ in (\ref{2dimreg}), lies on the boundary of both $\nn$ and $\nn_2$.

\medskip

A version of our main result is the following:

\begin{theorem}\label{nontechnictheorem}
The open subset $\nn\subset M_2(\mathbb{R})^2$ is such that if $\A=(A_0,A_1)\in\nn$ then
for uncountably many $t\in\mathbb{R}^+$, the matrix
pair $(A_0,tA_1)$
is a
finiteness counterexample.
\end{theorem}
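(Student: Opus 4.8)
The plan is to reduce the finiteness conjecture to a statement about ergodic optimization and then to exploit the structure of Sturmian measures. First I would invoke the (forthcoming) \theoremref{maxtheoremintro}, which identifies $r(\A(t))$ with the maximal value of $\int f_{\A(t)}\,d\mu$ over $T_{\A(t)}$-invariant probability measures $\mu$; the finiteness property for $\A(t)$ is then equivalent to the $f_{\A(t)}$-maximizing measure being supported on a periodic orbit of $T_{\A(t)}$ (equivalently, being a measure of rational parameter). So it suffices to produce uncountably many $t\in\R^+$ for which the $f_{\A(t)}$-maximizing measure is an $\A$-Sturmian measure of \emph{irrational} parameter $\mathcal{P}(t)$.

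The key step is to show that, for $\A\in\nn$, the family of $f_{\A(t)}$-maximizing measures ($t$ ranging over $\R^+$) is exactly the family of $\A$-Sturmian measures. The hypotheses defining $\mm$ — the inequality \eqref{definequal1} and the sign conditions \eqref{definequal2} — are precisely what make $f_\A$ behave like a concave-convex function with respect to $T_\A$ (cf.~\propref{concavconvexprop}); and the extra inequalities cutting out $\nn$ from $\mm$ (expressed via $\rho_{A_i}$ and $\sigma_{A_i}$) should be exactly the conditions guaranteeing that, as $t$ varies, the optimizing measure always falls into the Sturmian class rather than degenerating. I would use the characterization of Sturmian measures by the ``smallness'' of their support (they are the invariant measures whose support meets every cylinder in a uniformly controlled way, equivalently those supported on a Sturmian subshift), together with a monotonicity/continuity argument in $t$: as $t$ increases the optimizing measure shifts monotonically through the Sturmian family, sweeping out a continuum of parameters $\mathcal{P}(t)$. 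Since the Sturmian parameters $\mathcal{P}$ range over all of $[0,1]$ and only countably many of them are rational, uncountably many $t$ yield irrational $\mathcal{P}(t)$, hence uncountably many finiteness counterexamples.

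The main obstacle I anticipate is establishing the ``only'' direction — that for every $t$ the $f_{\A(t)}$-maximizing measure really is Sturmian — since a priori ergodic optimization can produce maximizing measures with complicated (even positive-entropy) support. This is where the concave-convex geometry encoded in $\mm$ must be used decisively: one wants a result saying that for a function of this type the maximizing measure is automatically ``as simple as possible'', i.e.~has minimal complexity support, which forces it into the balanced/Sturmian class. A secondary technical point is verifying that the $\rho,\sigma$ inequalities defining $\nn$ hold robustly on an open set and that they are exactly what is needed to keep the optimizing measure inside the Sturmian regime for \emph{all} $t>0$ rather than only a bounded range of $t$; the explicit subfamily $\nn_2$ in \eqref{2dimreg} can be used as a sanity check. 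Once these are in place, the continuity and monotonicity of $t\mapsto \mathcal{P}(t)$ (to be proved in \S\ref{devilsection} via classical facts about Sturmian measures on $\Omega$) deliver the uncountability immediately, so the heart of the argument is the identification of the optimizing measures with the $\A$-Sturmian measures.
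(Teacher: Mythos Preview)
Your high-level plan is correct and coincides with the paper's: reduce via \theoremref{maxtheoremintro} to ergodic optimization, show that for every $t>0$ the $f_{\A(t)}$-maximizing measure is the unique $\A$-Sturmian measure of some parameter $\mathcal{P}(t)$, and then use surjectivity and continuity of $t\mapsto\mathcal{P}(t)$ onto $[0,1]$ to conclude that uncountably many $t$ yield irrational $\mathcal{P}(t)$, hence a non-periodic support, hence a finiteness counterexample. You have also correctly located the crux, namely proving that the maximizing measure really is Sturmian.

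What is missing is the concrete mechanism for that crux. The paper does not invoke a general ``minimal complexity'' principle; instead, for each $\A$-Sturmian interval $\Gamma$ it builds an explicit \emph{Sturmian transfer function} $\phi_\Gamma$ (a Lipschitz coboundary, defined via a hybrid contraction $\tau_\Gamma$ patched from $T_{A_0}$ and $T_{A_1}$, in the spirit of Bousch) so that $f_{\A(t)}+\phi_\Gamma-\phi_\Gamma\circ T_\A$ is constant on $\Gamma$ for the appropriate $t$. The role of the $\nn$ inequalities $\rho_{A_1}<\sigma_{A_0}$ and $\sigma_{A_1}<\rho_{A_0}$ is not, as you suggest, to control the \emph{range} of $t$ --- for $t$ outside a bounded interval $(t_0(\A),t_1(\A))$ one matrix simply dominates and the maximizing measure is a Dirac at a fixed point, already Sturmian of parameter $0$ or $1$ --- but rather to force $f_\A+\phi_\Gamma$ to be strictly increasing on $X_{A_0}$ and strictly decreasing on $X_{A_1}$ for \emph{every} $\Gamma$. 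This monotonicity is exactly what makes the constant value on $\Gamma$ the strict global maximum of $f_{\A(t)}+\phi_\Gamma-\phi_\Gamma\circ T_\A$, pinning down the Sturmian measure on $\Gamma$ as the unique maximizing measure. Without this transfer-function construction your proposal has the right skeleton but leaves the decisive step unspecified.
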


If we define $\ee \subset M_2(\R)^2$
to be the set of matrix pairs which are equivalent to some pair in $\nn$
(recall that
$\A=(A_0,A_1)$ and $\A'=(A_0',A_1')$ are equivalent if
$A_0'=uP^{-1}A_0P$ and $A_1'=vP^{-1}A_1P$ for some invertible $P$ and $u,v>0$) then clearly:

\begin{cor}\label{nontechnictheoremcor}
The open subset $\ee\subset M_2(\mathbb{R})^2$ is such that if $\A=(A_0,A_1)\in\ee$ then
for uncountably many $t\in\mathbb{R}^+$, the matrix
pair $(A_0,tA_1)$
is a
finiteness counterexample.
\end{cor}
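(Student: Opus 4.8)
The plan is to derive Corollary~\ref{nontechnictheoremcor} directly from Theorem~\ref{nontechnictheorem} by exploiting the invariance of the finiteness property under the equivalence relation introduced earlier in \S\ref{problemsetting}. Recall that two pairs $\A=(A_0,A_1)$ and $\A'=(A_0',A_1')$ are equivalent when $A_0'=uP^{-1}A_0P$ and $A_1'=vP^{-1}A_1P$ for some invertible $P$ and some $u,v>0$; the key facts already established are that simultaneous similarity $(A_0,A_1)\mapsto(P^{-1}A_0P,P^{-1}A_1P)$ leaves the joint spectral radius unchanged and does not alter the set of optimizing sequences $(i_n)_{n=1}^\infty$, while diagonal rescaling $(A_0,A_1)\mapsto(uA_0,vA_1)$ likewise preserves the optimizing sequences (though it scales $r$). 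The definition of $\ee$ makes it an open subset of $M_2(\R)^2$: it is the union over all invertible $P$ and all $u,v>0$ of the images of the open set $\nn$ under the (homeomorphic) maps $(A_0,A_1)\mapsto(uP^{-1}A_0P,vP^{-1}A_1P)$, and an arbitrary union of open sets is open.

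First I would fix $\A=(A_0,A_1)\in\ee$ and unwind the definition: there exist an invertible $P$, reals $u,v>0$, and a pair $\A_\star=(B_0,B_1)\in\nn$ with $A_0=uP^{-1}B_0P$ and $A_1=vP^{-1}B_1P$. Next I would apply Theorem~\ref{nontechnictheorem} to $\A_\star$: there are uncountably many $s\in\R^+$ for which $(B_0,sB_1)$ is a finiteness counterexample, i.e.\ for which the finiteness property fails. Then I would verify that for each such $s$ the pair $(A_0,tA_1)$ with $t=(u/v)s$ is \emph{equivalent} to $(B_0,sB_1)$ in the sense above. Indeed, $(A_0,tA_1)=(uP^{-1}B_0P,\,t v P^{-1}B_1P)=(uP^{-1}B_0P,\,(us)P^{-1}B_1P)$, which is obtained from $(B_0,sB_1)$ by conjugating by $P$ and then scaling both coordinates by $u>0$; hence it matches the template with the same $P$ and with both rescaling constants equal to $u$.

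The remaining step is to confirm that equivalence preserves the property of being a finiteness counterexample. By the invariance statements above, passing from $(B_0,sB_1)$ to $(A_0,tA_1)$ via conjugation and positive rescaling changes neither the collection of sequences $(i_n)_{n=1}^\infty\in\{0,1\}^\N$ that attain the optimal growth rate, nor whether such an optimizing sequence is eventually periodic; and by Gelfand's formula together with \eqref{rjsr}, the finiteness property holds for a pair precisely when some periodic sequence is optimizing (equivalently, when $r$ equals $r(A_{i_1}\cdots A_{i_n})^{1/n}$ for some finite product). Since rescaling by $u$ multiplies every $r(A_{i_1}\cdots A_{i_n})^{1/n}$ and $r(\cdot)$ uniformly by the same factor, the equality characterizing the finiteness property is preserved, and conjugation changes nothing at all. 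Therefore $(A_0,tA_1)$ fails the finiteness property whenever $(B_0,sB_1)$ does. Finally, the map $s\mapsto t=(u/v)s$ is a bijection of $\R^+$ onto itself, so the uncountably many counterexample parameters $s$ for $\A_\star$ transfer to uncountably many counterexample parameters $t$ for $\A$, completing the proof.

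The main obstacle here is not deep: it is simply the bookkeeping in the previous paragraph, namely checking carefully that the specific form $(A_0,tA_1)$ --- where only the \emph{second} matrix is scaled by $t$ --- can be realized as a legitimate equivalence-class representative of $(B_0,sB_1)$, so that the invariance lemmas apply verbatim. One must be a little careful that the rescaling absorbed into the equivalence is applied to \emph{both} coordinates (by the common factor $u$), with the asymmetry between the $A_0$ and $A_1$ slots being exactly accounted for by the reparametrization $t=(u/v)s$; once this is spelled out, the corollary follows immediately from Theorem~\ref{nontechnictheorem} and the already-noted invariance of the joint spectral radius and of the optimizing sequences under the equivalence relation.
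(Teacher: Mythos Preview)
Your proposal is correct and follows essentially the same approach as the paper, which treats the corollary as an immediate (``clearly'') consequence of Theorem~\ref{nontechnictheorem} together with the invariance of optimizing sequences under simultaneous similarity and positive rescaling. You have simply spelled out the bookkeeping---the openness of $\ee$, the reparametrization $t=(u/v)s$, and the preservation of the finiteness property under equivalence---that the paper leaves implicit.
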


\begin{remark}
Theorem \ref{nontechnictheorem} yields new finiteness counterexamples, in the sense that $\nn$ contains matrix pairs 
which are not equivalent to pairs satisfying (\ref{bmfamily}), (\ref{kozyakinpairs}), or (\ref{standardpair}).
To see this, note for example that
\begin{equation}\label{exampleofa}
\A =  (A_0,A_1) = 
\left(  \begin{pmatrix} 5/8 & 3/112 \cr 7/8 & 15/16 \cr \end{pmatrix}\, , \
 \begin{pmatrix} 15/16 & 1 \cr 1/128 & 7/8 \cr \end{pmatrix}  \right) 
\end{equation}
belongs to $\nn$.
Both $A_0$ and $A_1$ have their larger eigenvalue equal to 1, and smaller eigenvalues given by
$\lambda_0=9/16$ and $\lambda_1=13/16$, respectively.
Now both matrices in (\ref{standardpair}) have the single eigenvalue 1, so (\ref{exampleofa}) cannot be equivalent to (\ref{standardpair}); moreover (\ref{exampleofa}) is not equivalent to any matrix pair satisfying (\ref{bmfamily}), since both matrices in (\ref{bmfamily}) have the property that the larger eigenvalue is more than double the smaller eigenvalue.
Lastly, we show that $\A$ in (\ref{exampleofa}) is not equivalent to any pair $\A'$ satisfying (\ref{kozyakinpairs}),
i.e.~$\A'=(A_0',A_1')$ with
$A_0' = \begin{pmatrix} 1 & 0 \cr c & d \cr \end{pmatrix}$,
$A_1'= \begin{pmatrix} a & b \cr 0 & 1 \cr \end{pmatrix}$,
where  $0<a,d<1\le bc$.
Note that both $A_0'$ and $A_1'$ have their larger eigenvalue equal to 1 (as is the case for $A_0$ and $A_1$),
and smaller eigenvalues equal to $d$ and $a$, respectively.
Thus if $\A$ and $\A'$ were equivalent then there would exist an invertible $P$ such that
$A_0'=P^{-1}A_0P$ and $A_1'=P^{-1}A_1P$ (i.e.~the positive reals $u,v$ in the above definition of equivalence must both equal 1), so that $d=\lambda_0=9/16$, $a=\lambda_1=13/16$, and $\text{trace}(A_0A_1)=\text{trace}(A_0'A_1')$.
In particular, 
$1 \le bc = \text{trace}(A_0'A_1')-d-a = \text{trace}(A_0A_1)-\lambda_0-\lambda_1 = 12995/14336 <1$, a contradiction.
It follows that $\A\in\nn$ given by (\ref{exampleofa}) is not equivalent to any matrix pair satisfying
(\ref{kozyakinpairs}).
\end{remark}

%
%

A key tool in proving Theorem \ref{nontechnictheorem} is the following Theorem \ref{maxtheoremintro}
(proved in \S \ref{induceddynsyssection} as Theorem \ref{maxtheorem})
characterising the joint spectral radius of $\A\in\mm$ in terms of maximizing the integral of a certain
function $f_\A$ over the set $\m_\A$ of probability measures invariant under an associated mapping $T_\A$.
More precisely, the action of any positive matrix $A$ on $(\mathbb{R}^+)^2$ induces a projective map $T_A$ (see \S \ref{inducproj}),
and if $\A=(A_0,A_1)\in\mm$
then the inverses $T_{A_0}^{-1}$, $T_{A_1}^{-1}$
together define a two-branch
dynamical system $T_\A$ (see \S \ref{induceddynsyssection}) on a subset of the unit interval $X$.
Defining the real-valued function $f_\A$, 
in terms of the derivative $T_\A'$ and characteristic functions of the images $T_{A_0}(X)$ and $T_{A_1}(X)$,
by
\begin{equation*}
f _\A =
\frac{1}{2}
\left(
\log T_\A' +(\log \det A_0)\mathbbm{1}_{T_{A_0}(X)}   +(\log \det A_1)\mathbbm{1}_{T_{A_1}(X)}
\right)
\end{equation*}
then gives:

\begin{theorem}\label{maxtheoremintro}
If $\A\in\mm$ then
\begin{equation}\label{maxtheoremintroeq}
\log r(\A) = 
\max_{\mu\in\m_\A} \int f_\A\, d\mu  \,.
\end{equation}
\end{theorem}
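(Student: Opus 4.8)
The plan is to relate the joint spectral radius to an ergodic-optimization quantity by expressing long matrix products as cocycles over the dynamical system $T_\A$, and then to apply the ergodic-theoretic characterisation of maximal growth rates. First I would use the equivalent formula (\ref{rjsr}) for $r(\A)$ in terms of spectral radii of products: $\log r(\A) = \lim_{n\to\infty}\frac1n\max\{\log r(A_{i_1}\cdots A_{i_n})\}$. Since we are in dimension $2$ with positive matrices, the spectral radius of a positive product $A_{i_1}\cdots A_{i_n}$ is its leading (Perron) eigenvalue, and this can be read off from the projective action: if $T_{A}$ denotes the projective map on $X$ induced by $A$ (as in \S\ref{inducproj}), then the leading eigenvalue of $A$ equals the derivative cocycle of $T_A^{-1}$ evaluated at the appropriate fixed point, up to the factor $\det A$. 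Concretely, for a positive $2\times 2$ matrix $M$ one has the identity $r(M)^2 = \det M \cdot (T_M^{-1})'(\xi_M)^{-1}$ where $\xi_M$ is the attracting fixed point of $T_M$ in $X$; equivalently $\log r(M) = \tfrac12\bigl(\log\det M - \log (T_M^{-1})'(\xi_M)\bigr)$. This is the algebraic heart of the matter and I expect the bulk of the verification to be a short computation with $2\times 2$ matrices, using the definitions of $\rho_A$ and $\sigma_A$ to identify the relevant fixed points and to check that, for $\A\in\mm$, the images $T_{A_0}(X)$ and $T_{A_1}(X)$ behave as required so that $T_\A$ (built from $T_{A_0}^{-1}, T_{A_1}^{-1}$) is a genuine two-branch expanding map on a subset of $X$.

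Next I would pass from the single-matrix identity to products. Writing $P_n = A_{i_1}\cdots A_{i_n}$, one has $\det P_n = \prod_{j=1}^n \det A_{i_j}$, so $\log\det P_n = \sum_{j=1}^n \log\det A_{i_j}$; and by the chain rule the derivative of the composition $T_{P_n}^{-1} = T_{A_{i_n}}^{-1}\circ\cdots\circ T_{A_{i_1}}^{-1}$ telescopes into a Birkhoff-type sum of $\log T_\A'$ along the $T_\A$-orbit of the periodic point corresponding to the periodic sequence $(i_1\cdots i_n)^\infty$. Combining these two facts, $\log r(P_n)$ is (up to the factor $\tfrac12$ and a bounded error coming from evaluating at the periodic point rather than the true fixed point) exactly the Birkhoff sum $\sum_{j=0}^{n-1} f_\A(T_\A^j x)$ over the length-$n$ periodic orbit $x$, where $f_\A$ is precisely the function in the statement: the $\tfrac12\log T_\A'$ term captures the derivative cocycle, and the two characteristic-function terms $\tfrac12(\log\det A_0)\mathbbm 1_{T_{A_0}(X)}$ and $\tfrac12(\log\det A_1)\mathbbm 1_{T_{A_1}(X)}$ record which branch (hence which $\det A_{i_j}$) was used at each step, since the branch taken at time $j$ is detected by which image set $T_\A^j x$ lies in.

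Finally I would invoke the standard fact from ergodic optimization (see e.g.~\cite{jeo}) that for a continuous function $f$ over an expanding (or more generally suitably hyperbolic) system, the maximal ergodic average $\max_{\mu}\int f\,d\mu$ over invariant probability measures coincides with $\limsup$ over periodic orbits of the average of $f$ along the orbit; equivalently, periodic-orbit averages are dense from below in the set of invariant-measure averages, and the maximum is attained. Applying this with $f = f_\A$ turns the right-hand side of (\ref{rjsr}), rewritten via the previous paragraph as $\lim_n \frac1n \max_{|x|=n}\sum_{j=0}^{n-1} f_\A(T_\A^j x)$, into $\max_{\mu\in\m_\A}\int f_\A\,d\mu$, which is exactly (\ref{maxtheoremintroeq}). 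The main obstacle I anticipate is not the ergodic-optimization input, which is classical, but rather the careful bookkeeping in step one and two: one must check that $f_\A$ is (at least) upper semicontinuous or has negligible discontinuity set for the maximum to be attained, control the bounded error between evaluating the derivative cocycle at the periodic point versus the genuine fixed point of $T_{P_n}$ (a uniform bounded-distortion estimate, available because $T_\A$ is uniformly expanding on the invariant set and the matrices lie in the open set $\mm$), and confirm that every $T_\A$-invariant measure is approximated by periodic orbits so that the two optima genuinely agree. These are exactly the points where the hypotheses defining $\mm$ — in particular the inequalities (\ref{definequal1}) and (\ref{definequal2}) guaranteeing the correct nesting and orientation of the branch images — are used.
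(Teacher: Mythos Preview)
Your overall strategy matches the paper's proof: express the spectral radius of a product via the derivative of the induced projective map at its fixed point, telescope into a Birkhoff sum of $f_\A$, then invoke density of periodic-orbit measures. Two points deserve correction.

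First, a sign slip: the correct identity is $r(M)^2 = \det M \cdot (T_M^{-1})'(\xi_M)$ (no outer inverse), equivalently $r(M)^2 = \det M / T_M'(\xi_M)$; this follows from $T_M'(\xi_M) = \det M / \lambda_M^2$, recorded in the paper as Lemma~\ref{derivfixedpt} and Corollary~\ref{specradiusta}.

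Second, and more substantively, the ``bounded error coming from evaluating at the periodic point rather than the true fixed point'' that you anticipate does not exist: the two points coincide. Because $T_\A$ is defined branchwise by the inverse maps $S_{A_i} = T_{A_i}^{-1}$, one has $T_\A^n \circ T_{A(\underline{i})} = \mathrm{id}$ on $X$ for every length-$n$ word $\underline{i}$, so the fixed point $p_{A(\underline{i})}$ of $T_{A(\underline{i})}$ is exactly the period-$n$ point of $T_\A$ with itinerary $\underline{i}$. The chain-rule computation then yields the \emph{exact} identity
\[
\log r(A(\underline{i}))^{1/n} \;=\; \frac{1}{n}\sum_{j=0}^{n-1} f_\A\bigl(T_\A^j(p_{A(\underline{i})})\bigr) \;=\; \int f_\A \, d\mu_{\underline{i}},
\]
with no distortion estimate required. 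Hence no uniform-expansion hypothesis on $T_\A$ is needed for this step; passing from $\sup_{\underline{i}} \int f_\A\, d\mu_{\underline{i}}$ to $\max_{\mu\in\m_\A}\int f_\A\, d\mu$ requires only weak-$*$ density of periodic-orbit measures (the paper cites Parthasarathy~\cite{parthasarathy} and Sigmund~\cite{sigmund}) together with Lipschitz continuity of $f_\A$ on the disjoint union $X_\A = X_{A_0}\cup X_{A_1}$, both of which are immediate here.
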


\medskip

In order to state a more precise version of Theorem \ref{nontechnictheorem}, 
we first need
some
basic facts concerning ergodic theory, symbolic dynamics, and Sturmian measures:

\begin{notation}\label{sturmianfullshift}
Let $\Omega = \{0,1\}^\N$ denote the set of one-sided sequences $\omega=(\omega_n)_{n=1}^\infty$,
where $\omega_n\in\{0,1\}$ for all $n\ge1$.
When equipped with the product topology, $\Omega$ becomes a compact space,
and the shift map $\sigma:\Omega\to\Omega$ defined by
$(\sigma\omega)_n = \omega_{n+1}$ for all $n\ge1$ is then continuous.
Let $\m$ denote the set of shift-invariant Borel probability measures
on $\Omega$; when equipped with the weak-$*$ topology $\m$ is compact (see \cite[Thm.~6.10]{walters}).

We equip $\Omega$ with the lexicographic order $<$,
and write $[\omega^-,\omega^+]=\{\omega\in\Omega: \omega^-\le \omega \le \omega^+\}$.
A \emph{Sturmian interval} is one of the form $[0\omega,1\omega]$, for some $\omega\in\Omega$.
A measure $\mu\in\m$ is called \emph{Sturmian} (see e.g.~\cite[Prop~1.5]{bouschmairesse}, \cite{bullettsentenac})
if its support is contained in a Sturmian interval.
Let $\mathcal{S}\subset \m$ denote the class of Sturmian measures on $\Omega$.
For a Sturmian measure $\mu\in\mathcal{S}$, the value
$\mu([1])$, denoted $\mathcal{P}(\mu)$, is called its \emph{(Sturmian) parameter}\footnote{This 
corresponds to the \emph{1-frequency} mentioned in \S \ref{problemsetting}, sometimes called
the \emph{1-ratio} (see e.g.~\cite{hmst, morrissidorov}), 
or the \emph{rotation number} (see e.g.~\cite{bullettsentenac}).}, where $[1]$ denotes the (cylinder) set  
$\{\omega\in\Omega : \omega_1=1\}$.
A \emph{Sturmian sequence} of parameter $\mathcal{P}$ is any point in the support of the Sturmian measure of parameter $\mathcal{P}$.
\end{notation}

The following are classical facts about Sturmian measures (see e.g.~\cite[\S 1.1]{bouschmairesse} or \cite{bullettsentenac}):

\begin{prop}\label{sturmianclassical}
\item[\, (a)]
For each Sturmian interval $[0\omega,1\omega]\subset\Omega$ there exists a unique Sturmian measure
whose support is contained in this interval.
\item[\, (b)]
The mapping $\mathcal{P}:\mathcal{S}\to[0,1]$ is a homeomorphism.
If $\mu\in\mathcal{S}$ has $\mathcal{P}(\mu)\in\mathbb{Q}$ then its support is a single $\sigma$-periodic orbit, while if $\mathcal{P}(\mu)\notin\mathbb{Q}$ then its support
is a Cantor subset of $\Omega$ which supports no other $\sigma$-invariant measure (and in particular contains no periodic orbit).
\item[\, (c)]
If $d(\omega)$ denotes the Sturmian parameter of the Sturmian
measure supported by the Sturmian interval $[0\omega,1\omega]\subset \Omega$,
then the map $d:\Omega\to [0,1]$ is 
continuous, non-decreasing, and surjective. 
The preimage $d^{-1}(\mathcal{P})$ is a singleton if $\mathcal{P}$ is irrational,
and a positive-length closed interval if $\mathcal{P}$ is rational.
\end{prop}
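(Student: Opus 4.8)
\medskip

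The plan is to derive all three parts from the classical realization of Sturmian measures as codings of circle rotations, which is the content of \cite{bullettsentenac,bouschmairesse}. First I would fix, for each $\alpha\in(0,1)$, the rotation $R_\alpha(x)=x+\alpha$ on the circle $\mathbb{T}=\mathbb{R}/\mathbb{Z}$, the partition $P_0=[0,1-\alpha)$, $P_1=[1-\alpha,1)$, and the itinerary map $c_\alpha\colon\mathbb{T}\to\Omega$, $c_\alpha(x)_n=\mathbbm{1}_{P_1}(R_\alpha^{n-1}x)$, which intertwines $R_\alpha$ with $\sigma$ on the closed invariant set $K_\alpha=\overline{c_\alpha(\mathbb{T})}$; pushing Lebesgue measure forward yields a $\sigma$-invariant probability measure $\mu_\alpha$ with $\mu_\alpha([1])=\mathrm{Leb}(P_1)=\alpha$, while for $\alpha=p/q\in\mathbb{Q}$ one pushes forward the equidistribution on the finite $R_{p/q}$-orbit, so that $K_{p/q}$ is a single $\sigma$-periodic orbit. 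I would then record (citing \cite{bullettsentenac,bouschmairesse}, or reproving it by the order argument below) that $\mathcal{S}=\{\mu_\alpha:\alpha\in[0,1]\}$, and that $\mathrm{supp}\,\mu_\alpha=K_\alpha$.

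The one step I would prove with care is that $c_\alpha$ is \emph{order-preserving}: for each finite word $u$ the set of $x\in[0,1)$ whose itinerary begins with $u$ is a subinterval, and these subintervals occur in the lexicographic order of $u$, whence $x\le y$ implies $c_\alpha(x)\le c_\alpha(y)$. Consequently $K_\alpha$ is linearly ordered with least element $m_\alpha^-$ and greatest element $m_\alpha^+$, and a short direct computation --- the only point at which irrationality is used --- shows that for $\alpha\notin\mathbb{Q}$ the forward orbit of the cut point avoids the partition endpoints, so the two extremal codings agree after one shift: $\sigma(m_\alpha^-)=\sigma(m_\alpha^+)=:w_\alpha$, giving $m_\alpha^-=0w_\alpha$ and $m_\alpha^+=1w_\alpha$ and hence $K_\alpha\subseteq[0w_\alpha,1w_\alpha]$. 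For $\alpha=p/q$ the extremal codings of the periodic orbit instead have \emph{distinct} tails, and one checks $K_{p/q}\subseteq[0\omega,1\omega]$ exactly for $\omega$ in a nondegenerate closed interval. Either way each $\mu_\alpha$ is Sturmian in the sense of Definition \ref{sturmianfullshift}, and the description of supports asserted in (b) is read off from this model together with unique ergodicity of irrational rotations.

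With this in hand I would argue as follows. For (a): the largest $\sigma$-invariant subset $\Lambda$ of a Sturmian interval $[0\omega,1\omega]$ is nonempty by compactness and hence carries an invariant (necessarily Sturmian) measure by Krylov--Bogolyubov; unravelling the two inequalities defining $[0\omega,1\omega]$ shows each $\xi_{n+1}$ is determined by the lexicographic comparison of $\sigma^{n+1}\xi$ with $\omega$ away from ties, which forces $(\Lambda,\sigma)$ to be a finite rotation or an irrational-rotation-type system, in either case uniquely ergodic --- giving uniqueness. For (b): $\mathcal{P}(\mu)=\mu([1])$ is weak-$*$ continuous, so $\mathcal{P}\colon\mathcal{S}\to[0,1]$ is continuous; it is onto by the first paragraph and injective since $\mu_\alpha([1])=\alpha$; a continuous bijection off the weak-$*$ compact $\mathcal{S}$ is a homeomorphism, and the support dichotomy is as above. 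For (c): $d$ is well defined by (a); it is non-decreasing because increasing $\omega$ moves both endpoints $0\omega,1\omega$ rightward, which by order-preservation cannot decrease the parameter of the enclosed Sturmian measure; continuity and surjectivity of $d$ follow from (b) together with compactness; finally $d(\omega)=\mathcal{P}$ is equivalent to $0\omega\le m_\mathcal{P}^-$ and $1\omega\ge m_\mathcal{P}^+$, which for irrational $\mathcal{P}$ forces $\omega=w_\mathcal{P}$, and for rational $\mathcal{P}=p/q$ becomes the condition that $\omega$ lie in the nondegenerate closed interval bounded by the two distinct extremal tails.

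I expect the main obstacle to be the second paragraph: carefully establishing order-preservation of $c_\alpha$ and the attendant boundary bookkeeping (the half-open conventions, and the two limiting codings at preimages of the partition endpoints), and especially verifying that $\sigma(m_\alpha^-)=\sigma(m_\alpha^+)$ holds precisely when $\alpha$ is irrational. That dichotomy is exactly what makes the fibres of $d$ single points for irrational values and nondegenerate intervals for rational ones --- the combinatorial core of the devil's-staircase phenomenon of \S\ref{devilsection}. The remaining ingredients --- weak-$*$ continuity of $\mu\mapsto\mu([1])$, Krylov--Bogolyubov, compactness of $\m$, unique ergodicity of irrational rotations --- are standard, so in the final write-up I would most likely just cite \cite{bullettsentenac,bouschmairesse} for the first two paragraphs and retain only the short deductions of (a)--(c).
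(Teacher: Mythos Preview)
The paper does not prove this proposition at all: it is introduced with the sentence ``The following are classical facts about Sturmian measures (see e.g.~\cite[\S 1.1]{bouschmairesse} or \cite{bullettsentenac})'' and then simply stated, with no argument. Your proposal, by contrast, is a genuine proof sketch reconstructing the content of those references via the rotation--coding model; the outline is correct, and indeed you note at the end that in a final write-up you would likely just cite \cite{bullettsentenac,bouschmairesse}, which is exactly what the paper does.

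Since you have gone further than the paper, a brief comment on the sketch itself. The overall architecture is sound and standard: build $\mu_\alpha$ by pushing Lebesgue (or the periodic-orbit equidistribution) forward under the itinerary map, use order-preservation of $c_\alpha$ to identify the extremal points of $K_\alpha$, and read off (a)--(c). Two places deserve a little more care if you ever write this out in full. First, weak-$*$ continuity of $\mu\mapsto\mu([1])$ is fine because $[1]$ is clopen in $\Omega$, so $\mathbbm{1}_{[1]}$ is continuous; you might make that explicit. Second, for the continuity of $d$ in part (c) you appeal to ``(b) together with compactness'', but strictly speaking you need continuity of the map $\omega\mapsto s_\omega$ (the Sturmian measure supported in $[0\omega,1\omega]$) into $\mathcal{S}$ before composing with the homeomorphism $\mathcal{P}$; this follows from your order-preservation picture plus a limit argument, or alternatively one shows $d$ is non-decreasing and has image dense in $[0,1]$, hence is continuous. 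Neither point is a gap, just a place where the phrase ``follows from'' hides a short argument.
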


For example the Sturmian measures of parameter
$1/2$, $1/3$, $2/5$, $3/8$ and $5/13$ are, respectively,
supported by the $\sigma$-periodic orbits generated by the finite words
$$
01
\, ,\
001
\, ,\
00101
\, ,\
00100101
\, ,\
0010010100101
\,,
$$
whereas the Sturmian measure of parameter
$(3-\sqrt{5})/2$ is 
supported by
the smallest Cantor set
containing the $\sigma$-orbit of 
$$
0010010100100101001010010010100101
\ldots
$$

In view of Theorem \ref{maxtheoremintro}, for a matrix pair $\A\in\mm$ we are interested in
measures $\nu\in\m_\A$ attaining the maximum in (\ref{maxtheoremintroeq}),
i.e.~satisfying $\int f_\A\, d\nu = \max_{\mu\in\m_\A} \int f_\A\, d\mu$;
such $\nu$ will be called \emph{$f_\A$-maximizing}. 
There is a topological conjugacy between $T_\A$ and the shift map
$\sigma:\Omega\to\Omega$, 
and this induces a natural homeomorphism between $\m_\A$ and $\m$;
the image of any $f_\A$-maximizing measure under this homeomorphism will be 
called a \emph{maximizing measure for $\A$}.
We then say that $\A=(A_0,A_1)\in \mm$ \emph{generates a full Sturmian family}
if the set of maximizing measures for the family $\A(t) = (A_0,tA_1)$, $t\in\mathbb{R}^+$,
is precisely the set $\mathcal{S}$ of all Sturmian measures on $\Omega$.

A more precise version of our main result Theorem \ref{nontechnictheorem} is then the following:

\begin{theorem}\label{maintheorem}
Every matrix pair in the open subset $\nn\subset M_2(\mathbb{R})^2$ 
(and hence the open subset $\ee\subset M_2(\mathbb{R})^2$)
generates a full Sturmian family.
\end{theorem}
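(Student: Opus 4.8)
The plan is to push the ergodic-optimization reformulation of Theorem~\ref{maxtheoremintro} through the topological conjugacy between $T_\A$ and the shift $\sigma$ on $\Omega$, and then to exploit the concave--convex geometry together with the quantitative inequalities defining $\nn$.

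\emph{Step 1: reduction to a one-parameter optimization on $\Omega$.} Passing from $\A=(A_0,A_1)$ to $\A(t)=(A_0,tA_1)$ leaves the projective maps $T_{A_0},T_{A_1}$ (hence $T_\A$, the interval $X$, the images $T_{A_0}(X)$ and $T_{A_1}(X)$, and the derivative $T_\A'$) unchanged while replacing $\det A_1$ by $t^2\det A_1$; therefore
\[
f_{\A(t)} \;=\; f_\A + (\log t)\,\mathbbm{1}_{T_{A_1}(X)}\,.
\]
Transporting everything to $\Omega$ via the conjugacy — under which $\m_\A$ becomes $\m$, the set $T_{A_1}(X)$ becomes the cylinder $[1]$, and $f_\A$ becomes a fixed continuous function $f$ — Theorem~\ref{maxtheoremintro} shows that the maximizing measures for $\A(t)$ are precisely the $\mu\in\m$ maximizing
\[
\Phi_s(\mu) \;:=\; \int f\,d\mu + s\,\mu([1])\,, \qquad s=\log t\,.
\]
Since $t$ ranges over $\mathbb{R}^+$ exactly when $s$ ranges over $\mathbb{R}$, the assertion becomes: $\bigcup_{s\in\mathbb{R}}\{\mu\in\m:\Phi_s(\mu)=\max\Phi_s\}=\mathcal{S}$.

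\emph{Step 2: for every $s$, the $\Phi_s$-maximizers are Sturmian.} Here I would use Proposition~\ref{concavconvexprop}: viewed on the interval, $f_{\A(t)}$ is concave along the branch $T_{A_0}$ and convex along the branch $T_{A_1}$. Given a $\Phi_s$-maximizer $\nu$, I would show that $\mathrm{supp}(\nu)$ lies in a single Sturmian interval $[0\omega,1\omega]$, so that $\nu$ is Sturmian by Definition~\ref{sturmianfullshift}. If not, the support ``braids'': there are points of $\mathrm{supp}(\nu)$ lying just after the symbol $1$ that exceed, in the interval order, points lying just after the symbol $0$; a rearrangement of $\nu$ that exchanges the continuations of the two orbit branches at such a crossing then changes $\int f_{\A(t)}\,d\nu$ by a sum of concavity/convexity defects of definite positive sign, contradicting maximality. (Equivalently, one may construct a calibrated sub-action whose contact locus is forced to be small by the strict concavity/convexity; this is the ``smallness of support'' characterisation referred to in \S\ref{problemsetting}.) Strict concavity/convexity also yields uniqueness of the $\Phi_s$-maximizer for each $s$, with a possibly trivial interval of $s$-values supporting each rational-parameter Sturmian measure.

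\emph{Step 3: every Sturmian measure is a $\Phi_s$-maximizer for some $s$ --- this is where $\nn$ is used.} By Step~2 the function $\Gamma(p):=\int f\,d\mu_p$, with $\mu_p$ the Sturmian measure of parameter $p$ (Proposition~\ref{sturmianclassical}), equals $\max\{\int f\,d\mu:\mu([1])=p\}$ for every $p$, so the union in Step~1 consists of exactly those $\mu_p$ for which $(p,\Gamma(p))$ is exposed from above in the (convex) set $\{(\mu([1]),\int f\,d\mu):\mu\in\m\}$, equivalently for which $\Gamma$ agrees at $p$ with its own concave envelope. It thus suffices to prove that $\Gamma$ is concave on all of $[0,1]$. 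Writing $f=\tfrac12\log T_\A'+\tfrac12(\log\det A_0)\mathbbm{1}_{[0]}+\tfrac12(\log\det A_1)\mathbbm{1}_{[1]}$, the last two terms contribute to $\Gamma(p)$ only the affine function $\tfrac12(1-p)\log\det A_0+\tfrac12 p\log\det A_1$, so concavity of $\Gamma$ reduces to concavity of the Sturmian Lyapunov exponent $p\mapsto\int\log T_\A'\,d\mu_p$; one checks, using the explicit structure of Sturmian measures and the inequalities $\rho_{A_1}<\sigma_{A_0}$ and $\sigma_{A_1}<\rho_{A_0}$ (together with the sign information of Corollary~\ref{rhoposlessminusone}, which locates the relevant fixed-point-type data of the two branches), that this frontier has no outward kink as $p$ crosses $[0,1]$, including at $p=0$ (where $\mu_0=\delta_{0^\infty}$ sits at the $T_{A_0}$-fixed point) and, symmetrically, at $p=1$. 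Granting concavity of $\Gamma$, every point of its graph is exposed --- and, by Step~2, only by a Sturmian measure --- so each $\mu_p$, $p\in[0,1]$, is the unique $\Phi_s$-maximizer for every $s$ in its (nonempty) range of supporting slopes; combined with Step~2 this gives $\bigcup_{s}\{\Phi_s\text{-maximizers}\}=\{\mu_p:p\in[0,1]\}=\mathcal{S}$. Hence every pair in $\nn$ generates a full Sturmian family; the statement for $\ee$ is then immediate, since the equivalence relation of \S\ref{problemsetting} preserves maximizing measures.

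\emph{Where the difficulty lies.} The crux is Step~3: turning the purely algebraic conditions $\rho_{A_1}<\sigma_{A_0}$ and $\sigma_{A_1}<\rho_{A_0}$ into the analytic fact that the Sturmian Lyapunov frontier $p\mapsto\int\log T_\A'\,d\mu_p$ is concave on all of $[0,1]$ --- this is exactly what separates $\nn$ from the larger concave--convex region $\mm$, for which the frontier may fail to be concave and an interval of Sturmian parameters be skipped. The rearrangement argument of Step~2, while following the established ``convexity $\Rightarrow$ Sturmian optimality'' paradigm, also has to be set up with care in the projective-interval model, in particular handling correctly the relative position of the images $T_{A_0}(X)$ and $T_{A_1}(X)$.
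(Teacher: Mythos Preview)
Your outline has the right high-level shape but both of its substantive steps are left as assertions, and the paper's actual mechanism is quite different. In Step~2 you invoke a ``rearrangement'' based on $f_{\A(t)}$ being concave on one branch and convex on the other; but Proposition~\ref{concavconvexprop} concerns the \emph{maps} $T_{A_0},T_{A_1}$, not $f_\A$ --- in fact $f_\A(x)=-\log|x+\sigma_{A_i}|+\text{const}$ is convex on each $X_{A_i}$, and the relevant property (Lemma~\ref{posnegf}) is that $f_\A$ is \emph{increasing} on $X_{A_0}$ and \emph{decreasing} on $X_{A_1}$. That monotonicity alone does not force maximizers into a Sturmian interval; the parenthetical ``calibrated sub-action'' is closer to what is needed, but you do not construct one. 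In Step~3 you reduce everything to concavity of $p\mapsto\int f\,d\mu_p$ and then simply assert that the $\nn$ inequalities $\rho_{A_1}<\sigma_{A_0}$, $\sigma_{A_1}<\rho_{A_0}$ deliver it; you yourself flag this as ``where the difficulty lies'', and no argument is given. There is also a circularity risk: your claim that $\Gamma(p)=\max\{\int f\,d\mu:\mu([1])=p\}$ for \emph{every} $p$ is stronger than what Step~2 (global maximizers are Sturmian) provides.

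The paper does not argue via concavity of the Sturmian frontier at all. It builds, for each $\A$-Sturmian interval $\Gamma$, an explicit transfer function $\varphi_\Gamma$ (Lemma~\ref{phiexists}) so that $f_{\A(t)}+\varphi_\Gamma-\varphi_\Gamma\circ T_\A$ is constant on $\Gamma$ once $t$ and $\Gamma$ are matched (Corollary~\ref{flattencor}, Proposition~\ref{analogue}). The $\nn$ hypotheses enter at a single, concrete point --- Proposition~\ref{increasingdecreasingprop} --- where one computes $(f_\A+\varphi_\Gamma)'=\sum_{n\ge0}(f_\A\circ\tau_\Gamma^n)'$, groups the sum into blocks between successive visits to $X_{A_0}$ (resp.\ $X_{A_1}$), and bounds each block by $-\frac{1}{z+\sigma_{A_0}}+\frac{1}{z+\rho_{A_1}}>0$ (resp.\ $-\frac{1}{z+\sigma_{A_1}}+\frac{1}{z+\rho_{A_0}}<0$); these are exactly the inequalities $\rho_{A_1}<\sigma_{A_0}$ and $\sigma_{A_1}<\rho_{A_0}$. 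That yields uniqueness and Sturmian-ness of the maximizer for each $t\in\t_\A$ in one stroke (Theorem~\ref{thm6}); the endpoints $t\le t_0(\A)$ and $t\ge t_1(\A)$ are handled separately (Corollaries~\ref{t0largerttheorem}, \ref{t1smallerttheorem}), and surjectivity onto $\mathcal{S}$ comes from continuity and monotonicity of $t\mapsto\Gamma_\A(t)$ (Lemma~\ref{strictinccomp}), not from any concavity of $p\mapsto\int f\,d\mu_p$. So the missing idea in your proposal is precisely this transfer-function construction and the block-summation estimate that converts the $\nn$ inequalities into the monotonicity of $f_\A+\varphi_\Gamma$.
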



Note that Theorem \ref{maintheorem} will follow from a more detailed version,
Theorem \ref{deviltheorem}, which in particular incorporates the 
statement that the parameter map $t\mapsto \mathcal{P}(t)$ is a devil's staircase.

\subsection{Relation with previous results}\label{relatsubsection}

The methods of this paper can also be used to give an alternative proof of some of the results mentioned above,
namely establishing the analogue of Theorem \ref{maintheorem} in certain cases treated by
Bousch \& Mairesse \cite{bouschmairesse} and Kozyakin \cite{kozyakin},
 and the case considered by
 Blondel, Theys \& Vladimirov \cite{btv}, Hare, Morris, Sidorov \& Theys \cite{hmst}, and Morris \& Sidorov \cite{morrissidorov}.

As already noted, the matrix pair (\ref{standardpair}) lies on the boundary of our open set $\nn$,
and clearly it also lies on the boundary of the set $\mathfrak{K}\subset M_2^+(\mathbb{R}^+)^2$ defined by 
Kozyakin's conditions (\ref{kozyakinpairs}).
It can be checked that $\mathfrak{K}$ itself lies in the boundary of our set $\mm$, but not in the boundary of $\nn$.
However, the subset $\mathfrak{K}' \subset \mathfrak{K}$ defined by
\begin{equation}\label{kprime}
\mathfrak{K}'
=
\left\{
\left(  \begin{pmatrix} 1 & 0 \cr c & d \cr \end{pmatrix}\, , \
 \begin{pmatrix} a & b \cr 0 & 1 \cr \end{pmatrix}  \right) \in\mathfrak{K}
 :
 a\le b\ \text{and}\ d\le c\right\} \,,
\end{equation}
can be readily checked to lie in the boundary of $\nn$.
Matrices in the Bousch-Mairesse family (\ref{bmfamily}) do not all satisfy our condition
(\ref{definequal1}), or indeed the corresponding weak inequality, 
so do not automatically belong to the boundary of $\mm$.
However, imposing the additional condition
\begin{equation}\label{bmadditional}
e^{2\kappa} \ge (e^{\kappa h_0}+1)(e^{\kappa h_1}+1)
\end{equation}
ensures that a matrix pair satisfying (\ref{bmfamily}) belongs to the boundary
of $\mm$, and indeed also belongs to the boundary of $\nn$.
In \S \ref{adaptations} we will indicate the minor modifications to our approach needed to
handle the case of (\ref{standardpair}), and the sub-cases of (\ref{bmfamily}) and (\ref{kozyakinpairs})
defined by (\ref{bmadditional}) and (\ref{kprime}) respectively.

\subsection{Organisation of article}
The article is organised as follows. Section \ref{prelimsection} consists of preliminaries:
 maps induced by matrices acting on projective space, Perron-Frobenius theory, and some useful notation
 and identities. Section \ref{projconvprojconc} develops the notions of projective convexity and projective concavity.
 Section \ref{induceddynsyssection} introduces the induced dynamical system $T_\A$ for concave-convex matrix pairs $\A$,
 the formulation of joint spectral radius in terms of ergodic optimization (Theorem \ref{maxtheorem}),
 and the connection between the finiteness property and $T_\A$-periodic orbits.
 Section \ref{sturmasection} introduces Sturmian measures and  Sturmian intervals for the dynamical system $T_\A$,
 and makes the connection between finiteness counterexamples and unique maximizing measures which are Sturmian of irrational parameter.
Section \ref{transfersection} establishes the existence of an important technical tool, the Sturmian transfer function.
After deriving some explicit formulae for extremal Sturmian intervals in Section \ref{extsturmsection},
the key Section \ref{particularsection} establishes the link between 
Sturmian intervals and the parameter $t$ of the pair $\A(t)$.
Section \ref{dominatessec} treats the case of those parameters $t$ such that one matrix in the pair $\A(t)$
dominates the other, so that the joint spectral radius $r(\A(t))$ is simply the spectral radius of the dominating matrix.
All other parameters are considered in Section \ref{technicalsection}, establishing that the joint spectral radius
is always  attained by a unique Sturmian measure.
Finally, in Section \ref{devilsection} we show that the map taking parameter values $t$ to the associated Sturmian
parameter
 $\mathcal{P}(t)$ is a devil's staircase.

\section{Preliminaries}\label{prelimsection}

\subsection{The induced map for a positive matrix}\label{inducproj}

\begin{notation}
Throughout we use the notation $X=[0,1]$.
\end{notation}

A positive matrix $A\in M_2(\mathbb{R}^+)$ gives a self-map 
$v \mapsto Av$
of $(\mathbb{R}^+)^2$.
This lifts to a self-map $\widetilde A: [v]\mapsto [Av]$
of projective space $(\mathbb{R}^+)^2/\sim$,  the equivalence relation $\sim$ being defined by
$v\sim v'$ if $v=sv'$ for some $s>0$, and $[v]$ denoting the equivalence class containing $v\in(\mathbb{R}^+)^2$.
It is convenient to identify projective space with 
\begin{equation*}
\Sigma=\left\{ \begin{pmatrix} x\\ 1-x\end{pmatrix} : x\in (0,1)\right\} \,,
\end{equation*}
so that the projection $\pi:(\mathbb{R}^+)^2 \to \Sigma$ takes the form
$$
\pi: \begin{pmatrix} x\\ y\end{pmatrix} \mapsto \begin{pmatrix} \frac{x}{x+y} \\ \frac{y}{x+y} \end{pmatrix}\,,
$$
and the projective map is represented as
 $\pi\circ A:\Sigma \to\Sigma$,
taking the explicit form
\begin{equation*}
\pi\circ A:
\begin{pmatrix} x\\ 1-x \end{pmatrix} \mapsto
\begin{pmatrix} \frac{(a-b)x+b}{(a+c-b-d)x + b+d} \\ \frac{(c-d)x+d}{(a+c-b-d)x+b+d}\end{pmatrix}\,.
\end{equation*}
This projective mapping is completely determined by its first coordinate, thereby
motivating the following definition of
the self-map $T_A$ of the unit interval $X=[0,1]$:

\begin{defn}\label{taxasa}
For $A= \begin{pmatrix} a&b\\c&d\end{pmatrix} \in M_2^+(\mathbb{R}^+)$,
the \emph{induced map} $T_A:X\to X$ is defined by
$$
T_A(x)=\frac{(a-b)x+b}{(a+c-b-d)x+b+d}\,,
$$
the \emph{induced image} $X_A$ is defined by 
$$X_A=T_A(X)=\left[\frac{b}{b+d},\frac{a}{a+c}\right]\,,$$
and the \emph{induced inverse map} $S_A:X_A\to X$ is given by
$$
S_A(x)= T_A^{-1}(x)  = \frac{ (b+d)x - b}{-(a+c-b-d)x+a-b}
\,.
$$
\end{defn}

\begin{remark}
Defining $P=\begin{pmatrix} 1 & 0 \\ 1 & 1 \end{pmatrix}$,
the M\"obius maps $T_A$ and $S_A$ are represented, respectively, by the matrices 
$PAP^{-1}$ and $PA^{-1}P^{-1}$.
\end{remark}

\begin{remark}\label{invariantposmultiple}
The objects defined in Definition \ref{taxasa} do not change if the matrix $A$
is multiplied by a positive real number; that is,
if $t>0$, $A \in M_2^+(\mathbb{R}^+)$,
then $T_{tA}=T_A$ (hence $S_{tA}=S_A$), and $X_{tA}=X_A$.
\end{remark}

In view of (\ref{definequal2}) in the definition of $\mm$,
it suffices to restrict attention to matrices
of the following form:

\begin{notation}
Let $\na$ denote the set of matrices $A= \begin{pmatrix} a&b\\c&d\end{pmatrix} \in M_2^+(\mathbb{R}^+)$
such that $a+c\neq b+d$.
\end{notation}

\begin{lemma}
For $A= \begin{pmatrix} a&b\\c&d\end{pmatrix} \in M_2^+(\mathbb{R}^+)$, the map $T_A$ has a single fixed point $p_A=T_A(p_A)$ in $X$.
If $A \in \na$ then
\begin{equation}\label{paformula}
p_A 
= \frac{a-d-2b+\sqrt{(a-d)^2+4bc}}{2(a+c-b-d)}\,,
\end{equation}
and if $A\notin\na$ then
\begin{equation}\label{paformula2}
p_A = \frac{b}{2b+d-a}\,.
\end{equation}
\end{lemma}
\begin{proof}
Uniqueness follows from the fact that $A$ has all entries strictly positive, and the formulae (\ref{paformula})
and (\ref{paformula2})
are
straightforward computations.
\end{proof}

\subsection{Notation and matrix preliminaries}

For a matrix $A= \begin{pmatrix} a&b\\c&d\end{pmatrix} \in M_2^+(\mathbb{R}^+)$,
it will be useful to write
\begin{equation}\label{alphadef}
\alpha_A = a+c-b-d \,,
\end{equation}
\begin{equation}\label{betadef}
\beta_A = a-d-2b \,,
\end{equation}
\begin{equation}\label{gammadef}
\gamma_A = \sqrt{(a-d)^2+4bc}\,,
\end{equation}
noting that these quantities are related by the following identity:

\begin{lemma}\label{alphabetagammalemma}
For $A \in M_2^+(\mathbb{R}^+)$,
\begin{equation}\label{alphabetagamma}
 \gamma_A^2 - \beta_A^2 
=
4b \alpha_A \,.
\end{equation}
\end{lemma}
\begin{proof}
Straightforward computation.
\end{proof}

For ease of reference it will be convenient to collect together
 various previously defined objects expressed in terms of the above notation.

\begin{prop}
For $A \in M_2^+(\mathbb{R}^+)$, 
\begin{equation}\label{rhoaredone}
\rho_A = \frac{2b}{\beta_A + \gamma_A}\,,
\end{equation}
\begin{equation*}
T_A(x) = \frac{(a-b)x+b}{\alpha_Ax+b+d}\,,
\end{equation*}
\begin{equation*}
S_A(x)
= \frac{(b+d)x-b}{-\alpha_{A}(x+\sigma_{A})} \,,
\end{equation*}
and if moreover $A\in\na$ then
\begin{equation}\label{sigmaaredone}
\sigma_A = \frac{b-a}{\alpha_A}\,,
\end{equation}
\begin{equation}\label{paformularedone}
p_A = \frac{\beta_A+\gamma_A}{2\alpha_A} 
 = \frac{b\, \sigma_A}{(b-a)\rho_A}\,.
\end{equation}
The set $\mm$ can be written as
\begin{equation*}
\mm
=
\left\{ (A_0,A_1) \in \na^2 : \frac{a_0}{c_0}<\frac{b_1}{d_1}
\ \text{and}\ 
\alpha_{A_1} < 0 < \alpha_{A_0} \right\}\,.
\end{equation*}
\end{prop}

\subsection{Perron-Frobenius theory and the joint spectral radius}

\begin{lemma}\label{pflemma}
The dominant (Perron-Frobenius) eigenvalue $\lambda_A>0$ of the matrix
$A= \begin{pmatrix} a&b\\c&d\end{pmatrix} \in M_2^+(\mathbb{R}^+)$
is given by
\begin{equation*}
\label{lambdaadef}
\lambda_A = \frac{1}{2}\left( a+d +\gamma_A\right)
=\frac{b}{p_A}+a-b
\,,
\end{equation*}
with corresponding left eigenvector 
\begin{equation*}
w_A = ( a-d +\gamma_A, 2b)\,,
\end{equation*}
and right eigenvector
\begin{equation*}
v_A= \begin{pmatrix} p_A \\ 1-p_A \end{pmatrix} \,.
\end{equation*}
\end{lemma}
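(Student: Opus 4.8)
The plan is to verify each of the three asserted formulae by direct computation, exploiting the notation $\alpha_A,\beta_A,\gamma_A$ together with the identity $\gamma_A^2-\beta_A^2=4b\alpha_A$ from Lemma~\ref{alphabetagammalemma}, and the fixed-point formula $p_A = (\beta_A+\gamma_A)/(2\alpha_A)$ from \eqref{paformularedone}. First I would treat the eigenvalue: since $A$ is a $2\times 2$ matrix its characteristic polynomial is $z^2-(a+d)z+\det A$, whose larger root is $\tfrac12(a+d+\sqrt{(a+d)^2-4\det A})=\tfrac12(a+d+\sqrt{(a-d)^2+4bc})=\tfrac12(a+d+\gamma_A)$, giving the first expression for $\lambda_A$. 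Positivity of $\lambda_A$ (and the fact that it is the dominant, i.e.\ Perron--Frobenius, eigenvalue) is immediate from $a,d>0$ and $\gamma_A\ge 0$, or one may simply invoke the classical Perron--Frobenius theorem for the strictly positive matrix $A$.

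Next I would establish the alternative expression $\lambda_A = b/p_A + a - b$. The cleanest route is to recall that $v_A = (p_A,\,1-p_A)^{\mathsf T}$ is, by the definition of $p_A$ as the fixed point of the induced M\"obius map $T_A$, a right eigenvector of $A$ (since the projectivisation $\pi\circ A$ fixes $\pi(v_A)$, the vector $v_A$ is mapped by $A$ to a positive scalar multiple of itself). Reading off the first coordinate of $Av_A = \lambda_A v_A$ gives $a p_A + b(1-p_A) = \lambda_A p_A$, i.e.\ $\lambda_A = a + b(1-p_A)/p_A = b/p_A + a - b$, as claimed. Alternatively one can verify $b/p_A + a - b = \tfrac12(a+d+\gamma_A)$ algebraically by substituting $p_A = (\beta_A+\gamma_A)/(2\alpha_A)$, clearing denominators, and applying $\gamma_A^2-\beta_A^2 = 4b\alpha_A$; this is a short but slightly fiddly manipulation, and I would present whichever version reads more transparently.

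Finally, the eigenvector claims: $v_A = (p_A,1-p_A)^{\mathsf T}$ being a right eigenvector follows as above from $p_A$ being the $T_A$-fixed point (and the eigenvalue is then forced to be $\lambda_A$ since $v_A$ has strictly positive entries, so $\lambda_A$ is the Perron eigenvalue). For the left eigenvector $w_A = (a-d+\gamma_A,\,2b)$ I would check directly that $w_A A = \lambda_A w_A$; since $w_A$ has positive entries this identifies $\lambda_A$ as the Perron eigenvalue and confirms the value. Writing out $w_A A = \bigl((a-d+\gamma_A)a + 2bc,\ (a-d+\gamma_A)b + 2bd\bigr)$, the second coordinate equals $b(a+d+\gamma_A) = 2b\lambda_A$ at once, and the first coordinate equals $\lambda_A(a-d+\gamma_A)$ upon expanding and using $\gamma_A^2 = (a-d)^2+4bc$. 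None of these steps presents a genuine obstacle; the only mildly delicate point is bookkeeping with the square root $\gamma_A$, which is handled uniformly by the identity of Lemma~\ref{alphabetagammalemma}.
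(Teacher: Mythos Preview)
Your proposal is correct. The paper in fact states Lemma~\ref{pflemma} without proof, treating it as a standard Perron--Frobenius computation; your direct verification via the characteristic polynomial for $\lambda_A$, the fixed-point characterisation of $p_A$ for the right eigenvector, and the explicit check $w_A A = \lambda_A w_A$ for the left eigenvector is exactly the sort of argument the authors are implicitly relying on.
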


For $A \in M_2^+(\mathbb{R}^+)$,
the derivative of $T_A$ at its fixed point $p_A$ is related to the determinant
and Perron-Frobenius eigenvalue of $A$ as follows:

\begin{lemma}\label{derivfixedpt}
If $A \in M_2^+(\mathbb{R}^+)$ then
\begin{equation*}
T_A'(p_A) = \frac{\det A}{\lambda_A^2}\,.
\end{equation*}
\end{lemma}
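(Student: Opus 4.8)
The plan is to compute $T_A'$ explicitly as a M\"obius derivative, simplify the resulting numerator to $\det A$, and then identify the denominator evaluated at the fixed point with $\lambda_A^2$.

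First I would differentiate $T_A(x) = \dfrac{(a-b)x+b}{\alpha_A x + b+d}$ directly. By the quotient rule the numerator of $T_A'(x)$ is $(a-b)(b+d) - b\,\alpha_A = (a-b)(b+d) - b(a+c-b-d)$, which expands and collapses to $ad - bc = \det A$; thus
\begin{equation*}
T_A'(x) = \frac{\det A}{(\alpha_A x + b + d)^2}\,.
\end{equation*}
(This is the general form of the derivative of an induced map, and it is worth recording for its own sake.) It remains to show that $\alpha_A p_A + b + d = \lambda_A$.

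For this I would use the fixed-point relation together with Lemma~\ref{pflemma}. Since $T_A(p_A) = p_A$, clearing denominators gives $(a-b)p_A + b = p_A(\alpha_A p_A + b + d)$. On the other hand, Lemma~\ref{pflemma} states $\lambda_A = \tfrac{b}{p_A} + a - b$, i.e.\ $\lambda_A\, p_A = b + (a-b)p_A$. Comparing the two displays yields $\lambda_A\, p_A = p_A(\alpha_A p_A + b + d)$, and since $p_A \ne 0$ we conclude $\alpha_A p_A + b + d = \lambda_A$. Substituting into the formula for $T_A'$ gives $T_A'(p_A) = \det A / \lambda_A^2$, as required. (Alternatively one could substitute $p_A = (\beta_A + \gamma_A)/(2\alpha_A)$ from \eqref{paformularedone} and $\lambda_A = \tfrac12(a+d+\gamma_A)$ from Lemma~\ref{pflemma} directly, since $\tfrac12(\beta_A + \gamma_A) + b + d = \tfrac12(a - d - 2b + \gamma_A) + b + d = \tfrac12(a + d + \gamma_A)$.)

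There is no real obstacle here: the statement is a short routine computation. The only conceptually meaningful point is the identity $\alpha_A p_A + b + d = \lambda_A$, which reflects the fact that the derivative of the projective action of $A$ at the eigendirection belonging to $\lambda_A$ equals the ratio of the two eigenvalues, namely $(\det A)/\lambda_A^2$; I would present the calculation above rather than invoke this principle abstractly, both for self-containedness and because the intermediate formula for $T_A'(x)$ is needed elsewhere.
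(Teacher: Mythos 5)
Your proposal is correct, and your parenthetical alternative (substituting $p_A=(\beta_A+\gamma_A)/(2\alpha_A)$ and $\lambda_A=\tfrac12(\beta_A+\gamma_A)+b+d$ directly) is precisely the computation the paper indicates in its proof. Your main route, deriving $\alpha_A p_A + b + d = \lambda_A$ from the fixed-point equation together with $\lambda_A p_A = b + (a-b)p_A$, is a minor and slightly cleaner variant of the same routine calculation.
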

\begin{proof}
This is a straightforward computation. If $A\in\na$ we can use the 
expression 
$p_A=\frac{\beta_A+\gamma_A}{2\alpha_A}$
(see (\ref{paformularedone})), the
derivative formula
$T_A'(x)
=
\det A\left( \alpha_A x+b+d\right)^{-2}
$, and the fact that
$\lambda_A = \frac{1}{2}\left( a+d +\gamma_A\right)
=\frac{1}{2}(\beta_A+\gamma_A)+b+d$
(see (\ref{lambdaadef})).
If $A\notin\na$ then $T_A'\equiv \frac{a-b}{b+d}$,
$\lambda_A=b+d$, and the relation $a+c=b+d$ means that $\det A = (a-b)(b+d)$, so the result follows.
\end{proof}

Since the Perron-Frobenius eigenvalue $\lambda_A$ is also the spectral radius $r(A)$,
we obtain the following corollary:

\begin{cor}\label{specradiusta}
If $A \in M_2^+(\mathbb{R}^+)$ then its spectral radius $r(A)$ satisfies
\begin{equation}\label{specradta}
r(A) = \left( \frac{\det A}{T_A'(p_A)}\right)^{1/2}\,.
\end{equation}
\end{cor}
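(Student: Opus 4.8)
The plan is to obtain the identity $(\ref{specradta})$ by directly combining Lemma \ref{derivfixedpt} with the Perron--Frobenius identification of the dominant eigenvalue with the spectral radius; no new computation is required, so this is really just a matter of assembling two facts already established.

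First I would recall that, by Lemma \ref{pflemma}, the matrix $A\in M_2^+(\mathbb{R}^+)$ has a strictly positive dominant eigenvalue $\lambda_A=\frac12(a+d+\gamma_A)$. Since $A$ has strictly positive entries, the Perron--Frobenius theorem guarantees that $\lambda_A$ strictly dominates the modulus of the remaining eigenvalue, so that $\lambda_A=r(A)$. Next I would invoke Lemma \ref{derivfixedpt}, which states that $T_A'(p_A)=\det A/\lambda_A^2$. Because $A\in M_2^+(\mathbb{R}^+)$ has $\det A>0$ by definition, and $\lambda_A>0$, the right-hand side is a positive quantity, and we may rearrange to $\lambda_A^2=\det A/T_A'(p_A)$. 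Taking the positive square root — legitimate precisely because both $\lambda_A$ and $\det A/T_A'(p_A)$ are positive — and substituting $\lambda_A=r(A)$ then gives $(\ref{specradta})$.

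Since every ingredient is already in hand, I do not anticipate any genuine obstacle. The only point requiring a moment's care is the choice of branch of the square root, and this is settled by the positivity of $\det A$ built into the definition of $M_2^+(\mathbb{R}^+)$ together with the positivity of $\lambda_A$ furnished by Lemma \ref{pflemma}.
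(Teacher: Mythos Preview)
Your proof is correct and matches the paper's own argument: the paper also simply invokes Lemma \ref{derivfixedpt} together with the Perron--Frobenius identification $\lambda_A=r(A)$. Your additional remark about positivity justifying the choice of square root branch is a small clarification the paper leaves implicit, but the approach is the same.
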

\begin{proof}
Immediate from Lemma \ref{derivfixedpt}.
\end{proof}

\begin{notation}
Let us write finite words using the alphabet $\{0,1\}$ as
$\underline{i}=(i_1,\ldots,i_n)$, and their length as $|\underline{i}|=n$.
Let $\Omega^*$ denote the set of all such finite words; that is, $\Omega^*=\cup_{n\ge1} \{0,1\}^n$.
Given $\A=(A_0,A_1)\in M_2(\mathbb{R})^2$, and $\underline{i}\in\{0,1\}^n$, let $A(\underline{i})$
denote the product
\begin{equation}\label{matrixproduct}
A(\underline{i}) = A_{i_1}\cdots A_{i_n}\,.
\end{equation}
\end{notation}

Corollary \ref{specradiusta} then allows us to express
the joint spectral radius of a matrix pair
 $\A=(A_0,A_1)\in M_2^+(\mathbb{R}^+)^2$
in terms of induced maps of the products $A(\underline{i})$ as follows:

\begin{prop}\label{jsrderiv}
If $\A=(A_0,A_1)\in M_2^+(\mathbb{R}^+)^2$, then its joint spectral radius $r(\A)$ satisfies
\begin{equation}\label{jsrderiveq}
r(\A)=
\sup_{\underline{i}\in \Omega^*} 
\left( \frac{\det A(\underline{i}) }{T_{A(\underline{i}) }' (p_{A(\underline{i} )})} \right)^{1/2|\underline{i}|}\,.
\end{equation}
\end{prop}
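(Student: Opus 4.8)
The plan is to derive (\ref{jsrderiveq}) from the characterisation (\ref{rjsr}) of the joint spectral radius together with Corollary~\ref{specradiusta}. The starting point is the equivalent definition
$$
r(\A) = \lim_{n\to\infty} \max\{ r(A(\underline{i}))^{1/n} : \underline{i}\in\{0,1\}^n\}\,,
$$
which one first rewrites as a supremum over all finite words. Indeed, the submultiplicativity argument underlying (\ref{rjsr}) (Fekete's lemma applied to $\log \max_{|\underline i|=n} r(A(\underline i))$, after passing to an adapted submultiplicative norm) shows that the $\limsup$ in (\ref{jsrlimsup}) coincides with the supremum $\sup_{\underline i\in\Omega^*} r(A(\underline i))^{1/|\underline i|}$; this is standard (see e.g.~\cite{jungers}) and I would simply cite it. So the first step is
$$
r(\A) = \sup_{\underline{i}\in\Omega^*} r(A(\underline{i}))^{1/|\underline{i}|}\,.
$$

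The second step is purely pointwise. For a fixed word $\underline i\in\{0,1\}^n$, the product $A(\underline i)=A_{i_1}\cdots A_{i_n}$ is again a matrix in $M_2^+(\mathbb{R}^+)$ — a product of positive matrices is positive — so Corollary~\ref{specradiusta} applies to it and gives
$$
r(A(\underline i)) = \left( \frac{\det A(\underline i)}{T_{A(\underline i)}'(p_{A(\underline i)})}\right)^{1/2}\,.
$$
Here one needs that $A(\underline i)\in M_2^+(\mathbb{R}^+)$, i.e.~that all entries are strictly positive and $\det A(\underline i)>0$; positivity of entries is immediate from positivity of $A_0,A_1$, and the determinant is multiplicative so $\det A(\underline i)=\prod_{j=1}^n \det A_{i_j}>0$. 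Substituting this expression into the supremum from the first step and raising to the power $1/|\underline i|$ (equivalently dividing the exponent $1/2$ by $|\underline i|$) yields exactly (\ref{jsrderiveq}).

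Writing this out, the proof is short: combine $r(\A)=\sup_{\underline i\in\Omega^*} r(A(\underline i))^{1/|\underline i|}$ with the per-word identity $r(A(\underline i))=(\det A(\underline i)/T_{A(\underline i)}'(p_{A(\underline i)}))^{1/2}$ from Corollary~\ref{specradiusta}, and take the $|\underline i|$-th root inside the supremum. The main (and only) point requiring a word of justification is the passage from the $\limsup$-over-length-$n$ formulation of $r(\A)$ to the supremum over all finite words $\underline i\in\Omega^*$; this is the standard equivalence of definitions of the joint spectral radius and I would invoke it by reference rather than reprove it. Everything else is a direct substitution, so I expect no real obstacle here — the proposition is essentially a repackaging of Corollary~\ref{specradiusta} in the language that will be convenient for the ergodic-optimization reformulation in \S\ref{induceddynsyssection}.
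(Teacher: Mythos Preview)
Your proposal is correct and follows essentially the same route as the paper: rewrite the joint spectral radius formula (\ref{rjsr}) as a supremum $\sup_{\underline{i}\in\Omega^*} r(A(\underline{i}))^{1/|\underline{i}|}$, then apply Corollary~\ref{specradiusta} to each product $A(\underline{i})$. The paper's proof is even more compressed than yours---it states the $\sup$ reformulation without comment and immediately invokes the corollary---so your extra remarks on why $A(\underline{i})\in M_2^+(\mathbb{R}^+)$ and on the $\lim$-to-$\sup$ passage are additional detail rather than a different approach.
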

\begin{proof}
The
expression (\ref{rjsr}) for the joint spectral radius can be written as
$$r(\A)
=\sup\left\{ r(A_{i_1}\cdots A_{i_n})^{1/n}:n\ge 1, i_j\in\{0,1\}\right\}
=
\sup_{\underline{i}\in \Omega^*} r(A(\underline{i}))^{1/|\underline{i}|}
\,,
$$
so applying Corollary \ref{specradiusta} with $A$ replaced by $A(\underline{i})$ yields the result.
\end{proof}

\subsection{Some useful formulae}
The purpose of this short subsection is to collect together various formulae
which will prove useful in the sequel.
Firstly, we have the following two expressions for the determinant of $A$ involving $\alpha_A$ and 
$\sigma_A$:

\begin{lemma}\label{detalphasigma}
For $A\in \na$, its determinant can be expressed as
\begin{equation}\label{detac}
\det A = -\alpha_A(a+c)\left( \frac{a}{a+c} + \sigma_A\right)
\end{equation}
and
\begin{equation}\label{detbd}
\det A = -\alpha_A(b+d)\left( \frac{b}{b+d} + \sigma_A\right) \,.
\end{equation}
\end{lemma}
\begin{proof}
Straightforward computation.
\end{proof}

There is a useful alternative way of expressing the quantity $\rho_A$:

\begin{lemma}\label{rhoquadratic}
For $A\in \na$,
\begin{equation}\label{altrho}
\rho_A = \frac{\gamma_A -\beta_A}{2\alpha_A}\,,
\end{equation}
and $\rho_A$
is the larger
 root of the quadratic polynomial $q_A$ defined by
\begin{equation}\label{qdef}
q_A(z) = \alpha_A z^2 +\beta_A z-b
\,.
\end{equation}
\end{lemma}
\begin{proof}
The expression (\ref{altrho}) follows from
(\ref{rhoaredone}) and the identity 
(\ref{alphabetagamma}).

The larger root of $q_A$ is computed to be
$$
\frac{-\beta_A+\sqrt{\beta_A^2+4\alpha_A b}}{2\alpha_A}
= \frac{-\beta_A +\gamma_A}{2\alpha_A}\,,
$$
again using (\ref{alphabetagamma}).
\end{proof}

Clearly
\begin{equation}\label{qdet}
q_A(z)
=\det \begin{pmatrix} 1 & z \\  -\alpha_A z & \beta_A z - b  \end{pmatrix} \,,
\end{equation}
though the following expression will prove to be more useful:

\begin{lemma}\label{qpolylemma}
For $A\in \na$,
\begin{equation*}
q_A(z) 
=\det \begin{pmatrix} 1 & z \\  b +d -\alpha_A z & (a-b) z -b  \end{pmatrix} \,.
\end{equation*}
\end{lemma}
\begin{proof}
Straightforward computation.
\end{proof}

\section{Projective convexity and projective concavity}\label{projconvprojconc}

\begin{remark}\label{derivativesremark}
\item[\, (a)]
For $A\in\na$ and $x\in X$, the derivative formula
\begin{equation}\label{taderiv}
T_A'(x)
=
\det A\left( \alpha_A x+b+d\right)^{-2}
\end{equation}
implies that if $\A\in \na^2$ then 
$T_{A_0}$ and $T_{A_1}$ are orientation preserving.

\item[\, (b)]
For $A\in\na$ and $x\in X$, 
the second derivative formula 
\begin{equation}\label{tasecondderiv}
T_A''(x)
=
-2\alpha_A \det A \left( \alpha_A x+b+d\right)^{-3}
\end{equation}
implies that if $\A\in\mm$ then 
$T_{A_0}'' < 0$ and $T_{A_1}'' > 0$,
i.e.~$T_{A_0}$ is strictly concave and $T_{A_1}$ is strictly convex.
\end{remark}

Part (b) of Remark \ref{derivativesremark} motivates the following definition, partitioning $\na$ into two subsets:

\begin{defn}
A matrix $A\in \na$ will be called \emph{projectively convex} if the induced map $T_A$ is strictly convex, and
\emph{projectively concave} if the induced map $T_A$ is strictly concave.
\end{defn}

\begin{remark}
The set $\na$ is the disjoint union of the subset of
projectively convex matrices and
the subset of projectively concave matrices.
\end{remark}

Recall that 
\begin{equation}\label{weig}
w_A=(w^{(1)}_A,w^{(2)}_A)
= ( a-d +\gamma_A, 2b)
\end{equation}
denotes the Perron-Frobenius left eigenvector of $A\in \na$,
and that (consequently) the right eigenvector for the other eigenvalue of $A$
is $\begin{pmatrix} w^{(2)}_A \\ -w^{(1)}_A\end{pmatrix}$.
It is useful to record the following identity:

\begin{lemma}\label{wrholem}
For $A = \begin{pmatrix} a&b\\c&d\end{pmatrix} \in \na$, 
\begin{equation}\label{wrho}
\rho_A = \frac{w_A^{(2)}}{w_A^{(1)}-w_A^{(2)}}\,.
\end{equation}
\end{lemma}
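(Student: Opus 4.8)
The statement to prove is Lemma~\ref{wrholem}, the identity $\rho_A = w_A^{(2)}/(w_A^{(1)} - w_A^{(2)})$.

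My plan is simply to substitute the known expressions and check equality directly, since all the relevant quantities have already been made explicit. Recall from (\ref{weig}) that $w_A^{(1)} = a - d + \gamma_A$ and $w_A^{(2)} = 2b$, so that $w_A^{(1)} - w_A^{(2)} = a - d - 2b + \gamma_A = \beta_A + \gamma_A$ by the definition (\ref{betadef}) of $\beta_A$. Hence the right-hand side of (\ref{wrho}) equals $2b/(\beta_A + \gamma_A)$, which is precisely the expression (\ref{rhoaredone}) for $\rho_A$. So the identity is immediate once one notices that $w_A^{(1)} - w_A^{(2)}$ telescopes to $\beta_A + \gamma_A$.

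There is essentially no obstacle here — the lemma is a one-line bookkeeping identity recording that the ``denominator slope'' $\beta_A + \gamma_A$ appearing in $\rho_A$ is the same combination of eigenvector entries $w_A^{(1)} - w_A^{(2)}$. The only thing to be careful about is that $\beta_A + \gamma_A = (a - d - 2b) + \sqrt{(a-d)^2 + 4bc}$ is strictly positive when $A \in M_2^+(\mathbb{R}^+)$ (so the quotient is well defined), which follows because $\gamma_A = \sqrt{(a-d)^2 + 4bc} > \sqrt{(a-d)^2} \ge |a - d| \ge a - d$ and $b > 0$ forces $\gamma_A^2 - (a-d)^2 = 4bc > 4b(a - d - b)$ only if... — more simply, $\gamma_A^2 - \beta_A^2 = 4b\alpha_A$ by Lemma~\ref{alphabetagammalemma} need not be positive, so one instead argues directly: $\gamma_A + \beta_A = \gamma_A + (a-d) - 2b$ and $\gamma_A \ge |a-d| \ge a - d$, giving $\gamma_A + \beta_A \ge 2(a-d) - 2b$, which is not obviously positive either; the cleanest route is to observe $p_A = (\beta_A + \gamma_A)/(2\alpha_A)$ lies in $X = [0,1]$ and $\alpha_A \ne 0$ on the relevant matrices, so $\beta_A + \gamma_A$ is nonzero with the sign of $\alpha_A$. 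In any case, positivity of the denominator is not needed for the algebraic identity itself, only for interpreting the quotient, so I would state the proof as: by (\ref{weig}), $w_A^{(1)} - w_A^{(2)} = (a - d + \gamma_A) - 2b = \beta_A + \gamma_A$, whence $w_A^{(2)}/(w_A^{(1)} - w_A^{(2)}) = 2b/(\beta_A + \gamma_A) = \rho_A$ by (\ref{rhoaredone}).
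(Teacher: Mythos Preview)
Your proof is correct and follows exactly the paper's approach: the paper's own proof simply says the identity is immediate from (\ref{rhoaredone}) and (\ref{weig}), which is precisely your substitution $w_A^{(1)}-w_A^{(2)}=\beta_A+\gamma_A$. The digression about nonvanishing of the denominator is unnecessary for this lemma and can be dropped.
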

\begin{proof}
Immediate from
(\ref{rhoaredone}) and
 (\ref{weig}).
\end{proof}

\begin{cor}\label{rhosiminvt}
For $A\in \na$, if $Q\in M_2(\mathbb{R})$ is non-singular then
$\rho_{Q^{-1}AQ}=\rho_A$; that is, $\rho_A$ is invariant  under similarities.
\end{cor}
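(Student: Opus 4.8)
The plan is to extract the invariance from the eigenvector formula (\ref{wrho}) of Lemma~\ref{wrholem}, which re-expresses $\rho_A$ in terms of the Perron--Frobenius left eigenvector $w_A$ rather than in terms of the individual entries of $A$; it is this reformulation, rather than the original defining formula, that is meant to make the behaviour under conjugation transparent.

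First I would record how the relevant Perron--Frobenius data transform under conjugation. Writing $B=Q^{-1}AQ$: conjugation preserves the spectrum, so $B$ has the same dominant eigenvalue $\lambda_A$ as $A$; and from $w_A A=\lambda_A w_A$ one obtains $(w_A Q)\,B=(w_A Q)(Q^{-1}AQ)=\lambda_A\,(w_A Q)$, so $w_A Q$ is a left eigenvector of $B$ associated with its dominant eigenvalue. Since that eigenvalue is simple (Perron--Frobenius), the left Perron--Frobenius eigenvector $w_B$ of $B$ is a non-zero scalar multiple of $w_A Q$.

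Next I would apply Lemma~\ref{wrholem} to both $A$ and $B$. The right-hand side of (\ref{wrho}) is homogeneous of degree $0$ in the covector, so $\rho_B$ may be evaluated using $w_A Q$ in place of the normalised eigenvector of $B$; the remaining task is then the elementary linear-algebra computation checking, with the explicit form of the eigenvectors in hand, that $(w_A Q)^{(2)}\big/\bigl((w_A Q)^{(1)}-(w_A Q)^{(2)}\bigr)$ coincides with $w_A^{(2)}/(w_A^{(1)}-w_A^{(2)})=\rho_A$, which yields $\rho_B=\rho_A$ as required.

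The step deserving the most care is this final identity: one must track precisely how the two linear functionals $w\mapsto w^{(2)}$ and $w\mapsto w^{(1)}-w^{(2)}$ entering the definition of $\rho$ interact with right-multiplication of $w$ by $Q$, and it is here that any hypothesis on $Q$ (beyond non-singularity) would enter. An alternative, more geometric route would use the identification of $\rho_A$ (via Lemma~\ref{rhoquadratic} together with the fixed-point computation for $T_A$ in $\S\ref{inducproj}$) with minus the repelling fixed point of the M\"obius map induced by $A$, and then the compatibility of $T_{Q^{-1}AQ}$ with conjugation of $T_A$; but the eigenvector argument above keeps everything at the level of Lemma~\ref{wrholem} and is the shortest.
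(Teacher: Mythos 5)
Your instinct to worry about the final identity is exactly right, but the situation is worse than a step ``deserving care'': that step fails outright for a general non-singular $Q$, and the Corollary itself is false as stated. The quantity $w\mapsto w^{(2)}/(w^{(1)}-w^{(2)})$ appearing in (\ref{wrho}) depends only on the projective class $[w^{(1)}:w^{(2)}]$ of the covector, so the equality
$(w_A Q)^{(2)}/\bigl((w_A Q)^{(1)}-(w_A Q)^{(2)}\bigr) = w_A^{(2)}/\bigl(w_A^{(1)}-w_A^{(2)}\bigr)$
holds precisely when $w_A Q$ is a scalar multiple of $w_A$, i.e.\ when $w_A$ is a left eigenvector of $Q$ --- which for generic $Q$ is not the case. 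A concrete counterexample to the Corollary: take $A = \left(\begin{smallmatrix}2&1\\1&1\end{smallmatrix}\right)$ and $Q=\left(\begin{smallmatrix}1&1\\0&1\end{smallmatrix}\right)$, so that $B:=Q^{-1}AQ = \left(\begin{smallmatrix}1&1\\1&2\end{smallmatrix}\right)$; both $A$ and $B$ lie in $M_2^+(\mathbb{R}^+)$, yet $\rho_A = (1+\sqrt5)/2 > 0$ while $\rho_B = -(3+\sqrt5)/2 < -1$. (Indeed $\alpha_A=1>0$ but $\alpha_B=-1<0$, so $A$ is projectively concave while $B$ is projectively convex, and Lemmas \ref{concaveequivconditions} and \ref{convexequivconditions} already force $\rho$ to change sign.)

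It is worth noting that the paper's own proof is the single sentence that the result is immediate from Lemma \ref{wrholem} ``and the fact that the eigenvector $w_A$ is invariant under similarities'' --- but that ``fact'' is not true, and you correctly observe that the left Perron--Frobenius eigenvector of $Q^{-1}AQ$ is a nonzero multiple of $w_A Q$, not of $w_A$. So you have in effect located the error in the published argument. Since $\rho_A$ is \emph{not} a spectral invariant (it encodes the projective position of the eigendirections, which is exactly what conjugation moves), the Corollary should be discarded or restricted; the similarity-invariant statement which is actually used later in the paper is the much weaker $\rho_{tA}=\rho_A$ for scalars $t>0$, which follows directly from the defining formula and is invoked in the proof of (\ref{tti}).
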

\begin{proof}
Immediate from Lemma \ref{wrholem}, and the fact that the eigenvector $w_A$ is invariant under similarities.
\end{proof}

There are various useful characterisations of projective convexity and projective concavity:

\begin{lemma}\label{concaveequivconditions}
For $A  \in \na$, the following are equivalent
\item[\, (i)] 
$A$ is projectively concave,
\item[\, (ii)] 
$\alpha_A>0$,
\item[\, (iii)]
$\rho_A>0$,
\item[\, (iv)]
$w^{(1)}_A > w^{(2)}_A$.
\end{lemma}
\begin{proof}
As noted in Remark \ref{derivativesremark} (b), the second derivative formula 
(\ref{tasecondderiv}) yields the equivalence of (i) and (ii), since $\det A >0$, and a function
is strictly concave if and only if its second derivative is strictly negative.

To prove the equivalence of (ii) and (iii), we consider separately the cases where $\beta_A\ge0$ and 
$\beta_A<0$.
If $\beta_A\ge0$ then $\alpha_A=\beta_A+b+c>0$, so we must simply
show that $\rho_A>0$.
But $\gamma_A>0$ by definition, hence $\beta_A+\gamma_A>0$, and therefore
(\ref{rhoaredone}) implies that $\rho_A = \frac{2b}{\beta_A + \gamma_A}>0$, as required.
If on the other hand $\beta_A<0$ then $\gamma_A - \beta_A>0$ is automatically true, 
again since $\gamma_A>0$ by definition.
Using (\ref{alphabetagamma}) and (\ref{rhoaredone}) we see that
\begin{equation*}
2\alpha_A \rho_A = \gamma_A-\beta_A >0\,,
\end{equation*}
so indeed $\alpha_A>0$ if and only if $\rho_A>0$, as required.

Lastly, the equivalence of (iii) and (iv) is immediate from (\ref{wrho}), since $w_A^{(2)}>0$.
\end{proof}

\begin{lemma}\label{convexequivconditions}
For $A  \in \na$, the following are equivalent
\item[\, (i)] 
$A$ is projectively convex,
\item[\, (ii)] 
$\alpha_A<0$,
\item[\, (iii)]
$\rho_A < -1$,
\item[\, (iv)]
$w^{(1)}_A < w^{(2)}_A$.
\end{lemma}
\begin{proof}
A function
is strictly convex if and only if its second derivative is strictly positive, so
the equivalence of (i) and (ii)
follows from
(\ref{tasecondderiv}), since $\det A >0$
and $\alpha_Ax+b+d = a+c+(b+d)(1-x)>0$ for all $x\in X$.

To prove that (iii) is equivalent to (iv), note that (\ref{wrho})
gives $w^{(1)}_A=w^{(2)}_A(1+\rho_A^{-1})$; therefore $\rho_A<-1$ if and only if
$1+\rho_A^{-1}\in (0,1)$, if and only if $w^{(1)}_A\in(0,w^{(2)}_A)$.

Lastly, to prove the equivalence of (ii) and (iii), it follows from
Lemma \ref{concaveequivconditions} that $\alpha_A<0$ if and only if $\rho_A<0$,
but this latter inequality in fact implies $w^{(2)}_A-w^{(1)}_A>0$ by (\ref{wrho}), so
 $$\rho_A = -1 - \frac{w_A^{(1)}}{w_A^{(2)}-w_A^{(1)}} <-1\,,$$ 
 as required.
\end{proof}

Note that in Lemma \ref{convexequivconditions} the assertion is not merely that $\rho_A<0$, but that $\rho_A<-1$; this should be contrasted with the inequality $\rho_A>0$ in Lemma
\ref{concaveequivconditions}.

It is now clear why $\mm$ is described as the set of 
\emph{concave-convex pairs}\footnote{Note, however, the restriction that the induced images be 
\emph{disjoint}, with the concave image to the \emph{left} of the convex one.}:

\begin{prop}\label{concavconvexprop}
The set $\mm$ consists of those matrix pairs  $(A_0,A_1)\in \na^2$ such that
$A_0$ is projectively concave, $A_1$ is projectively convex, and the induced
image for $A_0$ is strictly to the left of the induced image of $A_1$. 
\end{prop}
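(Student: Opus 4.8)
The plan is to verify that the definition of $\mm$ via inequalities (\ref{definequal1}) and (\ref{definequal2}) coincides with the stated geometric description, by translating each of the three geometric conditions into one of the algebraic conditions already collected in the preceding lemmas. Recall that, by the reformulation of $\mm$ recorded just before this subsection, $(A_0,A_1)\in\mm$ precisely when $\frac{a_0}{c_0}<\frac{b_1}{d_1}$ together with $\alpha_{A_1}<0<\alpha_{A_0}$.

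First I would dispatch the convexity/concavity clauses. By Lemma \ref{concaveequivconditions}, $A_0$ is projectively concave if and only if $\alpha_{A_0}>0$; by Lemma \ref{convexequivconditions}, $A_1$ is projectively convex if and only if $\alpha_{A_1}<0$. Hence the pair of inequalities $\alpha_{A_1}<0<\alpha_{A_0}$ — which is exactly the second half of (\ref{definequal2}) as rewritten in the displayed formula for $\mm$ — is equivalent to the assertion that $A_0$ is projectively concave and $A_1$ is projectively convex. (Note that (\ref{definequal2}) as originally stated reads $a_1+c_1-b_1-d_1<0<a_0+c_0-b_0-d_0$, which is literally $\alpha_{A_1}<0<\alpha_{A_0}$ by the definition (\ref{alphadef}) of $\alpha_A$, so no computation is needed here.)

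Next I would handle the clause about the induced images. By Definition \ref{taxasa}, the induced image of $A_0$ is $X_{A_0}=\left[\frac{b_0}{b_0+d_0},\frac{a_0}{a_0+c_0}\right]$ and that of $A_1$ is $X_{A_1}=\left[\frac{b_1}{b_1+d_1},\frac{a_1}{a_1+c_1}\right]$. To say $X_{A_0}$ lies strictly to the left of $X_{A_1}$ is to say that the right endpoint of $X_{A_0}$ is strictly less than the left endpoint of $X_{A_1}$, i.e.\ $\frac{a_0}{a_0+c_0}<\frac{b_1}{b_1+d_1}$. Since all entries are positive, the map $s\mapsto \frac{s}{1+s}$ is strictly increasing on $\mathbb{R}^+$, so this is equivalent to $\frac{a_0}{c_0}<\frac{b_1}{d_1}$, which is precisely (\ref{definequal1}). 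Assembling the three equivalences gives the proposition.

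I do not anticipate a genuine obstacle here: the statement is essentially a bookkeeping lemma that packages Lemmas \ref{concaveequivconditions} and \ref{convexequivconditions} together with Definition \ref{taxasa}. The only mild subtlety worth stating carefully is the monotonicity of $s\mapsto s/(1+s)$ that converts the endpoint inequality into the ratio inequality (\ref{definequal1}), and the observation that "strictly to the left" must be read as strict separation of the two closed intervals (hence the right endpoint of the first is strictly below the left endpoint of the second), consistent with the footnote emphasising that the induced images are disjoint with the concave one on the left.
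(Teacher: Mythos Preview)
Your proposal is correct and follows essentially the same approach as the paper: invoke Lemmas \ref{concaveequivconditions} and \ref{convexequivconditions} to convert $\alpha_{A_1}<0<\alpha_{A_0}$ into the concave/convex conditions, and rewrite (\ref{definequal1}) as the endpoint inequality $T_{A_0}(1)=\frac{a_0}{a_0+c_0}<\frac{b_1}{b_1+d_1}=T_{A_1}(0)$. The only difference is that you make explicit the monotonicity of $s\mapsto s/(1+s)$, which the paper leaves to the reader.
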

\begin{proof}
Lemmas \ref{concaveequivconditions} and \ref{convexequivconditions}
imply that the inequality $\alpha_{A_1}<0<\alpha_{A_0}$
in Definition \ref{frakmdefn}
 is equivalent to
$A_0$ being projectively concave and $A_1$ being projectively convex.
The inequality $\frac{a_0}{c_0}  <  \frac{b_1}{d_1}$ in Definition \ref{frakmdefn}
is equivalent to
$T_{A_0}(1) = \frac{a_0}{a_0+c_0}  <  \frac{b_1}{b_1+d_1}=T_{A_1}(0)$, which asserts that the right
endpoint of the induced image $X_{A_0}$ is  strictly to the left of the left endpoint
of the induced image $X_{A_1}$.
\end{proof}

\begin{lemma}\label{lemma1}
If $A\in \na$ is projectively concave then
$\sigma_{A}<0$. 
\end{lemma}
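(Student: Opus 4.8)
The plan is to show directly that the quantity $\sigma_A = \frac{b-a}{\alpha_A}$ is negative, using that projective concavity means $\alpha_A = a+c-b-d > 0$ (by Lemma \ref{concaveequivconditions}). Since the denominator $\alpha_A$ is positive, it suffices to show the numerator $b - a$ is negative, i.e.\ that $a > b$. So the whole content of the lemma reduces to the claim: \emph{a projectively concave positive matrix satisfies $a > b$.}

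To establish $a > b$, I would exploit the defining inequality $\alpha_A > 0$ together with positivity of the entries. We have $a + c - b - d > 0$, i.e.\ $a - b > d - c$. This alone does not force $a > b$ (if $d > c$ it would, but we could have $d < c$). So I would look for a second relation. The natural place is the formula for $\rho_A$: by Lemma \ref{concaveequivconditions}, projective concavity is equivalent to $\rho_A > 0$, and $\rho_A = \frac{2b}{\beta_A + \gamma_A}$ with $\beta_A = a - d - 2b$ and $\gamma_A = \sqrt{(a-d)^2 + 4bc} > 0$. Since $b > 0$ and $\rho_A > 0$, we get $\beta_A + \gamma_A > 0$, that is, $\gamma_A > 2b - (a-d)$. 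If $2b - (a-d) \le 0$ there is nothing further to extract this way, but if $2b > a - d$ we may square (both sides positive) to obtain $(a-d)^2 + 4bc > 4b^2 - 4b(a-d) + (a-d)^2$, i.e.\ $4bc > 4b^2 - 4b(a-d)$, hence (dividing by $4b>0$) $c > b - (a-d) = b - a + d$, which rearranges to $a - b > d - c$ — merely $\alpha_A > 0$ again. So this route is circular; the inequality $a > b$ is genuinely an additional input and I expect it must come from combining $\alpha_A > 0$ with the \emph{sign pattern of the induced map} or with a more clever manipulation.

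The cleanest argument I can see: use $T_A$. The induced map is $T_A(x) = \frac{(a-b)x + b}{\alpha_A x + b + d}$, and the induced image is $X_A = \left[\frac{b}{b+d}, \frac{a}{a+c}\right]$ (Definition \ref{taxasa}), so $T_A$ maps $[0,1]$ into $[0,1]$ and is increasing with $T_A(0) = \frac{b}{b+d}$, $T_A(1) = \frac{a}{a+c}$. Since $X_A$ must be a nondegenerate interval inside $[0,1]$ with left endpoint to the left of the right endpoint, $\frac{b}{b+d} < \frac{a}{a+c}$, which cross-multiplies to $b(a+c) < a(b+d)$, i.e.\ $bc < ad$, i.e.\ $\det A > 0$ — again something we already know. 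The actual extra fact I need is that the \emph{slope of the numerator} $a - b$ is positive: note $T_A$ is increasing (derivative $\det A \cdot (\alpha_A x + b+d)^{-2} > 0$) and strictly concave; a strictly concave increasing M\"obius map on $[0,1]$ whose graph stays in $[0,1]$ must have both its asymptote and its behaviour constrained. Concretely, strict concavity with $\alpha_A > 0$ means the pole of $T_A$, at $x = -\frac{b+d}{\alpha_A} < 0$, lies to the left of $0$; for such a M\"obius map to be increasing on all of $[0,1]$ we need $a - b > 0$ — this is exactly the sign of the residue, equivalently of $\det(PAP^{-1})$-type data. I would therefore prove $a > b$ by noting $T_A'(x) > 0$ and evaluating the standard M\"obius-derivative identity, or most simply: $T_A(1) - T_A(0) > 0$ expands to $\frac{(a-b)\alpha_A}{(b+d)(a+c)}\cdot$(positive), and since $\alpha_A > 0$ and the denominator is positive, $a - b > 0$ follows. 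Then $\sigma_A = \frac{b-a}{\alpha_A} < 0$, completing the proof.

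\begin{proof}[Proof plan]
By Lemma \ref{concaveequivconditions}, projective concavity of $A$ is equivalent to $\alpha_A > 0$. Since $\sigma_A = \frac{b-a}{\alpha_A}$, it suffices to show $a > b$. For this, compute $T_A(1) - T_A(0)$: a short calculation gives $T_A(1) - T_A(0) = \frac{a}{a+c} - \frac{b}{b+d} = \frac{ad - bc}{(a+c)(b+d)} = \frac{\det A}{(a+c)(b+d)} > 0$, and separately $T_A(1) - T_A(0) = \frac{(a-b) + \text{(lower order)}}{\cdots}$; more efficiently, since $T_A$ is increasing and strictly concave with pole at $x = -\frac{b+d}{\alpha_A} < 0$ (as $\alpha_A > 0$), the numerator $(a-b)x + b$ must be increasing on $[0,1]$, forcing its slope $a - b > 0$. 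Hence $b - a < 0$ while $\alpha_A > 0$, so $\sigma_A = \frac{b-a}{\alpha_A} < 0$.
\end{proof}
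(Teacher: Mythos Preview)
Your reduction is correct: since projective concavity gives $\alpha_A>0$, and $\sigma_A=(b-a)/\alpha_A$, the whole lemma amounts to showing $a>b$. However, your written argument for $a>b$ has a gap. In the final proof plan you assert that ``since $T_A$ is increasing and strictly concave with pole at $x=-\frac{b+d}{\alpha_A}<0$, the numerator $(a-b)x+b$ must be increasing on $[0,1]$''. This does not follow from the stated premises: an increasing M\"obius map can certainly have a decreasing linear numerator (e.g.\ $x\mapsto\frac{-x+2}{-2x+3}$), and neither concavity nor the location of the pole by itself rules this out. The step needs an actual argument.

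The gap is easy to fill. One clean way: on $[0,1]$ the denominator $\alpha_A x+b+d$ is strictly positive and strictly increasing (since $\alpha_A>0$), and the numerator $(a-b)x+b$ is strictly positive (its endpoint values are $b>0$ and $a>0$). If $a-b\le0$ the numerator would be non-increasing, and a positive non-increasing function divided by a positive strictly increasing function is strictly decreasing; this contradicts $T_A'=\det A\,(\alpha_A x+b+d)^{-2}>0$. Hence $a-b>0$. Alternatively, write $T_A(x)=\frac{a-b}{\alpha_A}-\frac{\det A}{\alpha_A(\alpha_A x+b+d)}$: for $x\ge0$ the second term is negative and increases to $0$, so $T_A(x)<\frac{a-b}{\alpha_A}$; in particular $0<\frac{a}{a+c}=T_A(1)<\frac{a-b}{\alpha_A}=-\sigma_A$, giving $\sigma_A<0$ directly.

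For comparison, the paper avoids $T_A$ entirely and argues purely algebraically: from $\det A>0$ one gets $d>bc/a$, hence $d-c>-\frac{c}{a}(a-b)$; combining with $a-b>d-c$ (which is $\alpha_A>0$) yields $(a-b)(1+c/a)>0$, so $a-b>0$. This is shorter and uses only the two defining inequalities $\det A>0$ and $\alpha_A>0$.
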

\begin{proof}
Projective concavity of $A$ means that $\alpha_A>0$,
 so
 by (\ref{sigmaaredone})
  it suffices to show that $b<a$.
Since $\det A = ad-bc>0$ 
and $\alpha_A=a+c-b-d>0$
we derive
$$
a-b>d-c > \frac{bc}{a}-c = -\frac{c}{a}(a-b)\,,
$$
or in other words
$$
(a-b)\left(1+\frac{c}{a}\right)>0\,,
$$
and hence $a-b>0$, 
as required.
\end{proof}

We can now prove the following result mentioned in \S \ref{statementsubsection}
(note, however, that there is no constraint on the sign of $\sigma_{A_1}$ when $(A_0,A_1)\in\mm$):

\begin{cor}\label{rhoposlessminusone}
If $(A_0,A_1)\in \mm$ then 
$\sigma_{A_0}<0<\rho_{A_0}$ and $\rho_{A_1}<-1$.
\end{cor}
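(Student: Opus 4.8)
The plan is to deduce Corollary~\ref{rhoposlessminusone} directly from the definition of $\mm$ together with the characterisations of projective concavity and projective convexity established above. Recall that $(A_0,A_1)\in\mm$ means precisely that $\alpha_{A_1}<0<\alpha_{A_0}$ (together with the image-ordering inequality (\ref{definequal1}), which plays no role here).

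First I would treat $A_0$. Since $\alpha_{A_0}>0$, Lemma~\ref{concaveequivconditions} (equivalence of (ii) and (i)) says $A_0$ is projectively concave, and the equivalence of (ii) and (iii) in that same lemma gives $\rho_{A_0}>0$. For the inequality $\sigma_{A_0}<0$, I would simply invoke Lemma~\ref{lemma1}, which asserts exactly that a projectively concave matrix has $\sigma<0$. Combining these yields $\sigma_{A_0}<0<\rho_{A_0}$.

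Next I would treat $A_1$. Since $\alpha_{A_1}<0$, Lemma~\ref{convexequivconditions} applies: the equivalence of (ii) and (i) makes $A_1$ projectively convex, and the equivalence of (ii) and (iii) gives the sharper conclusion $\rho_{A_1}<-1$ (not merely $\rho_{A_1}<0$). This completes the proof.

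There is essentially no obstacle here: the corollary is a bookkeeping consequence of the equivalences already proved, and the only mildly substantive input is Lemma~\ref{lemma1} for the sign of $\sigma_{A_0}$. The one point to be careful about is that $\mm$ imposes no sign condition on $\alpha_{A_1}$'s counterpart for $\sigma_{A_1}$, so—as the remark preceding the corollary warns—one should not claim anything about $\sigma_{A_1}$; the statement correctly omits it.

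\begin{proof}
Let $(A_0,A_1)\in\mm$, so that $\alpha_{A_1}<0<\alpha_{A_0}$.

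Since $\alpha_{A_0}>0$, Lemma~\ref{concaveequivconditions} shows that $A_0$ is projectively concave and that $\rho_{A_0}>0$. Moreover, being projectively concave, $A_0$ satisfies $\sigma_{A_0}<0$ by Lemma~\ref{lemma1}. Hence $\sigma_{A_0}<0<\rho_{A_0}$.

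Since $\alpha_{A_1}<0$, Lemma~\ref{convexequivconditions} shows that $A_1$ is projectively convex and, in particular, that $\rho_{A_1}<-1$.
\end{proof}
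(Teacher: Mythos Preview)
Your proof is correct and takes essentially the same approach as the paper, which simply states that the result is immediate from Lemmas~\ref{concaveequivconditions}, \ref{convexequivconditions}, and~\ref{lemma1}. Your version spells out the deductions in slightly more detail but follows exactly the same logical route.
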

\begin{proof}
Immediate from Lemmas \ref{concaveequivconditions},  \ref{convexequivconditions},
and \ref{lemma1}
\end{proof}

An important result is the following:

\begin{lemma}\label{posneglinfactors}
If $A\in \na$ then
\begin{equation}\label{alwayspos}
-\alpha_{A}(x+\sigma_{A}) > 0 \quad\text{for all } x\in X_{A}\,.
\end{equation}

In particular, if $A\in \na$ is projectively concave then
\begin{equation}\label{posneglinfactorseq0}
x+\sigma_{A}<0 \quad\text{for all }x\in X_{A}\,,
\end{equation}
and if $A\in \na$ is projectively convex then
\begin{equation}\label{posneglinfactorseq1}
x+\sigma_{A}>0 \quad\text{for all }x\in X_{A}\,.
\end{equation}
\end{lemma}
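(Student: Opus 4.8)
The plan is to prove (\ref{alwayspos}) directly and then read off (\ref{posneglinfactorseq0}) and (\ref{posneglinfactorseq1}) according to the sign of $\alpha_A$. Recall from Definition~\ref{taxasa} that $X_A = \left[\frac{b}{b+d}, \frac{a}{a+c}\right]$, and recall the formula $S_A(x) = \frac{(b+d)x-b}{-\alpha_A(x+\sigma_A)}$. The key observation is that the denominator appearing here is precisely $-\alpha_A(x+\sigma_A)$, and since $S_A$ is a well-defined (finite, indeed real-valued) map on $X_A$ mapping into $X=[0,1]$, this denominator cannot vanish on $X_A$; it therefore has constant sign there, and it remains only to pin down that sign as positive.

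First I would evaluate $-\alpha_A(x+\sigma_A)$ at a convenient point of $X_A$, say the left endpoint $x = \frac{b}{b+d} = T_A(0)$. At this point the numerator $(b+d)x - b$ of $S_A$ vanishes (consistent with $S_A\!\left(\frac{b}{b+d}\right) = 0$), so I instead compute the denominator value directly: $-\alpha_A\left(\frac{b}{b+d} + \sigma_A\right)$. Using the determinant identity (\ref{detbd}) from Lemma~\ref{detalphasigma}, namely $\det A = -\alpha_A(b+d)\left(\frac{b}{b+d} + \sigma_A\right)$, this equals $\frac{\det A}{b+d}$, which is strictly positive because $A \in M_2^+(\mathbb{R}^+)$ forces $\det A > 0$ and $b+d > 0$. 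Hence $-\alpha_A(x+\sigma_A) > 0$ at the left endpoint of $X_A$.

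Second, I would argue that the sign is constant on all of $X_A$. The function $x \mapsto -\alpha_A(x+\sigma_A)$ is affine in $x$, so if it changed sign on the interval $X_A$ it would have a zero in $X_A$; but then the formula for $S_A(x)$ would have a vanishing denominator at an interior or endpoint of $X_A$, contradicting that $S_A = T_A^{-1}$ is a genuine real-valued map on $X_A$ (equivalently, one checks from (\ref{detac}) that at the other endpoint $x = \frac{a}{a+c}$ the same quantity equals $\frac{\det A}{a+c} > 0$ as well, so affineness gives positivity throughout). Either way, $-\alpha_A(x+\sigma_A) > 0$ for all $x \in X_A$, which is (\ref{alwayspos}).

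Finally, the two special cases are immediate: if $A$ is projectively concave then $\alpha_A > 0$ by Lemma~\ref{concaveequivconditions}, so dividing (\ref{alwayspos}) by $-\alpha_A < 0$ reverses the inequality to give $x + \sigma_A < 0$ on $X_A$, which is (\ref{posneglinfactorseq0}); if $A$ is projectively convex then $\alpha_A < 0$ by Lemma~\ref{convexequivconditions}, so dividing by $-\alpha_A > 0$ preserves the inequality to give $x + \sigma_A > 0$ on $X_A$, which is (\ref{posneglinfactorseq1}). The only mild subtlety — and the one place one must be slightly careful rather than purely mechanical — is justifying the constancy of sign on $X_A$; checking positivity at both endpoints via (\ref{detac}) and (\ref{detbd}) and invoking affineness disposes of it cleanly, so I expect no real obstacle.
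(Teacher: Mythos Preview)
Your proof is correct and uses essentially the same ingredients as the paper: the determinant identities (\ref{detac}) and (\ref{detbd}) evaluated at the endpoints of $X_A$. The only difference is organizational---the paper proves the two special cases first (using just one endpoint each, together with the monotonicity of $x\mapsto x+\sigma_A$) and deduces (\ref{alwayspos}), whereas you establish (\ref{alwayspos}) directly at both endpoints via affineness and then divide through by $-\alpha_A$; both routes are equally clean.
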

\begin{proof}
Clearly (\ref{alwayspos}) follows from (\ref{posneglinfactorseq0}) and
(\ref{posneglinfactorseq1}),
since $\alpha_A$ is positive if $A$ is projectively concave, and negative if
$A$ is projectively convex,
 by Lemmas 
\ref{concaveequivconditions} and \ref{convexequivconditions}.

To prove (\ref{posneglinfactorseq0}), note that 
$ -\alpha_A(a+c)\left( \frac{a}{a+c} + \sigma_A\right) = \det A >0$
by (\ref{detac}), and if $A$ is projectively concave then $\alpha_A>0$, so
$\frac{a}{a+c} + \sigma_A < 0$. But $\frac{a}{a+c}$ is the righthand endpoint of $X_A$, so if $x\in X_A$ then $x\le \frac{a}{a+c}$, therefore
$
x+\sigma_A \le \frac{a}{a+c} + \sigma_A < 0
$, as required.

To prove (\ref{posneglinfactorseq1}), note that 
$-\alpha_A(b+d)\left( \frac{b}{b+d} + \sigma_A\right) = \det A >0$
by (\ref{detbd}), and if $A$ is projectively convex then $\alpha_A<0$, so
$\frac{b}{b+d} + \sigma_A >0$.
But $\frac{b}{b+d}$ is the lefthand endpoint of $X_A$, so if $x\in X_A$ then $x\ge \frac{b}{b+d}$, therefore
$
x+\sigma_A \ge \frac{b}{b+d} + \sigma_A > 0
$, as required.
\end{proof}

\begin{cor}\label{alwaysposcor}
If $(A_0,A_1)\in\mm$ then 
$
x+\sigma_{A_0} <0
$
for $x\in X_{A_0}$,
and 
$x+\sigma_{A_1} >0
$
for $x\in X_{A_1}$, and
$-\alpha_{A_i}(x+\sigma_{A_i}) > 0$ for all $x\in X_{A_i}$, $i\in\{0,1\}$.
\end{cor}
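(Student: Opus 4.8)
The statement to prove is Corollary~\ref{alwaysposcor}, which asserts that for $(A_0,A_1)\in\mm$ we have $x+\sigma_{A_0}<0$ for all $x\in X_{A_0}$, $x+\sigma_{A_1}>0$ for all $x\in X_{A_1}$, and $-\alpha_{A_i}(x+\sigma_{A_i})>0$ for all $x\in X_{A_i}$, $i\in\{0,1\}$.

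The plan is to deduce everything directly from Lemma~\ref{posneglinfactors} together with Proposition~\ref{concavconvexprop}. First I would invoke Proposition~\ref{concavconvexprop} to record that membership of $(A_0,A_1)$ in $\mm$ means precisely that $A_0$ is projectively concave and $A_1$ is projectively convex (the third condition in that proposition, about the relative position of the induced images, is not needed here). Then the first assertion, $x+\sigma_{A_0}<0$ for $x\in X_{A_0}$, is exactly the conclusion~(\ref{posneglinfactorseq0}) of Lemma~\ref{posneglinfactors} applied to the projectively concave matrix $A_0$; likewise the second assertion, $x+\sigma_{A_1}>0$ for $x\in X_{A_1}$, is exactly~(\ref{posneglinfactorseq1}) applied to the projectively convex matrix $A_1$. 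Finally, the third assertion $-\alpha_{A_i}(x+\sigma_{A_i})>0$ for $x\in X_{A_i}$ is just~(\ref{alwayspos}) of the same lemma, applied in turn to $A_0$ and to $A_1$.

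There is essentially no obstacle: the corollary is a bookkeeping restatement of Lemma~\ref{posneglinfactors} in the special case of a concave-convex pair, collecting the three relevant inequalities in one place for convenient later reference. The only thing to be careful about is to correctly match "projectively concave" with $A_0$ and "projectively convex" with $A_1$, which is dictated by the sign condition $\alpha_{A_1}<0<\alpha_{A_0}$ in Definition~\ref{frakmdefn} via Lemmas~\ref{concaveequivconditions} and~\ref{convexequivconditions}.

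\begin{proof}
By Proposition~\ref{concavconvexprop}, if $(A_0,A_1)\in\mm$ then $A_0$ is projectively concave and $A_1$ is projectively convex. Applying~(\ref{posneglinfactorseq0}) of Lemma~\ref{posneglinfactors} to $A_0$ gives $x+\sigma_{A_0}<0$ for all $x\in X_{A_0}$, and applying~(\ref{posneglinfactorseq1}) of Lemma~\ref{posneglinfactors} to $A_1$ gives $x+\sigma_{A_1}>0$ for all $x\in X_{A_1}$. Finally, applying~(\ref{alwayspos}) of Lemma~\ref{posneglinfactors} to $A_0$ and to $A_1$ in turn yields $-\alpha_{A_i}(x+\sigma_{A_i})>0$ for all $x\in X_{A_i}$, $i\in\{0,1\}$.
\end{proof}
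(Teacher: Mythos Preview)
Your proof is correct and follows essentially the same approach as the paper, which simply notes that the corollary is immediate from Lemma~\ref{posneglinfactors}. You spell out the invocation of Proposition~\ref{concavconvexprop} to justify that $A_0$ is projectively concave and $A_1$ is projectively convex, but otherwise the argument is identical.
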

\begin{proof}
Immediate from Lemma \ref{posneglinfactors}.
\end{proof}

\begin{lemma}\label{qrhoineq}
If $\A=(A_0,A_1)\in\mm$ then 
\begin{equation*}
q_{A_1}(\rho_{A_0}) < 0
<
q_{A_0}(\rho_{A_1})\,.
\end{equation*}
\end{lemma}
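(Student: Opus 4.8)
The plan is to exploit the factored description of $q_A$. By Lemma~\ref{rhoquadratic}, $\rho_A$ is the \emph{larger} root of $q_A(z)=\alpha_A z^2+\beta_A z - b$, and $q_A$ has leading coefficient $\alpha_A$. For $A_0$ projectively concave we have $\alpha_{A_0}>0$, so $q_{A_0}$ is an upward-opening parabola, hence $q_{A_0}(z)<0$ precisely for $z$ strictly between its two roots, and $q_{A_0}(z)>0$ for $z$ to the left of the smaller root (or to the right of $\rho_{A_0}$). For $A_1$ projectively convex we have $\alpha_{A_1}<0$, so $q_{A_1}$ opens downward: $q_{A_1}(z)<0$ for $z$ to the right of its larger root $\rho_{A_1}$ (or to the left of the smaller one), and $q_{A_1}(z)>0$ strictly between the two roots. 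So everything reduces to locating $\rho_{A_0}$ and $\rho_{A_1}$ relative to the roots of the \emph{other} matrix's quadratic.

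The first inequality, $q_{A_1}(\rho_{A_0})<0$: since $q_{A_1}$ opens downward and $\rho_{A_1}$ is its larger root, it suffices to show $\rho_{A_0}>\rho_{A_1}$. But $\rho_{A_0}>0$ by Corollary~\ref{rhoposlessminusone} while $\rho_{A_1}<-1$, so $\rho_{A_0}>\rho_{A_1}$ is immediate and $q_{A_1}(\rho_{A_0})<0$ follows. (One should double-check there is no subtlety about the smaller root of $q_{A_1}$: since $q_{A_1}(z)=\alpha_{A_1}(z-\rho_{A_1})(z-\rho'_{A_1})$ with $\rho_{A_1}$ the larger root, for $z>\rho_{A_1}$ one has $(z-\rho_{A_1})>0$ and $(z-\rho'_{A_1})>0$, hence $q_{A_1}(z)=\alpha_{A_1}\cdot(\text{positive})<0$; no information about the sign or size of $\rho'_{A_1}$ is needed beyond $\rho'_{A_1}\le\rho_{A_1}$.)

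The second inequality, $q_{A_0}(\rho_{A_1})>0$: here $q_{A_0}$ opens upward with larger root $\rho_{A_0}>0$. Writing $q_{A_0}(z)=\alpha_{A_0}(z-\rho_{A_0})(z-\rho'_{A_0})$ where $\rho'_{A_0}$ is the smaller root, evaluating at $z=\rho_{A_1}<-1<0<\rho_{A_0}$ gives $(\rho_{A_1}-\rho_{A_0})<0$, so $q_{A_0}(\rho_{A_1})>0$ if and only if $\rho_{A_1}<\rho'_{A_0}$, i.e.\ $\rho_{A_1}$ lies strictly to the left of \emph{both} roots of $q_{A_0}$. The product of the roots of $q_{A_0}$ is $\rho_{A_0}\rho'_{A_0}=-b_0/\alpha_{A_0}<0$ (since $b_0>0$, $\alpha_{A_0}>0$), and since $\rho_{A_0}>0$ we get $\rho'_{A_0}<0$. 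So I must show the negative root $\rho'_{A_0}$ exceeds $\rho_{A_1}$. The cleanest route is to compute $\rho'_{A_0}$ explicitly: from $\rho_{A_0}+\rho'_{A_0}=-\beta_{A_0}/\alpha_{A_0}$ and the formula (\ref{altrho}) $\rho_{A_0}=(\gamma_{A_0}-\beta_{A_0})/(2\alpha_{A_0})$, one gets $\rho'_{A_0}=-(\beta_{A_0}+\gamma_{A_0})/(2\alpha_{A_0})=-p_{A_0}$ using (\ref{paformularedone}). Since $p_{A_0}\in X=[0,1]$, we have $\rho'_{A_0}=-p_{A_0}\ge -1>\rho_{A_1}$, which gives $q_{A_0}(\rho_{A_1})>0$.

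I expect the main obstacle — really the only non-bookkeeping point — to be the identification $\rho'_{A_0}=-p_{A_0}$ and hence the a~priori bound $\rho'_{A_0}\ge -1$; everything else is sign-tracking via Lemmas~\ref{concaveequivconditions}, \ref{convexequivconditions} and Corollary~\ref{rhoposlessminusone}. An alternative to the $\rho'_{A_0}=-p_{A_0}$ computation, in case one prefers to avoid it, is to evaluate $q_{A_0}(-1)=\alpha_{A_0}-\beta_{A_0}-b_0$ directly and check it is $\ge 0$ (this equals $(a_0+c_0)-(a_0-d_0-2b_0)-b_0 = c_0+d_0+b_0>0$ after simplification, in fact strictly positive), which combined with $q_{A_0}$ opening upward and $-1\le \rho'_{A_0}<0<\rho_{A_0}$ forces $\rho_{A_1}<-1\le\rho'_{A_0}$ to give $q_{A_0}(\rho_{A_1})>0$ as well; but this still implicitly needs $\rho'_{A_0}\le -1$ is false, so the value computation $q_{A_0}(-1)>0$ plus "$-1$ lies left of $\rho'_{A_0}$" is the honest statement, and one sees $q_{A_0}(\rho_{A_1})>q_{A_0}(-1)>0$ by monotonicity of $q_{A_0}$ on $(-\infty,\rho'_{A_0}]$ since $\rho_{A_1}<-1\le\rho'_{A_0}$ fails — so the $-p_{A_0}$ route above is the clean one and should be used.
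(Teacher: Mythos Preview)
Your proof is correct and follows the same overall architecture as the paper's: for each inequality you locate the relevant $\rho$ relative to the two roots of the other matrix's quadratic, using the sign of the leading coefficient $\alpha$. The first inequality is handled identically. For the second, both you and the paper reduce to showing that the smaller root of $q_{A_0}$ (the paper calls it $r_{A_0}$, you call it $\rho'_{A_0}$) is at least $-1$, so that $\rho_{A_1}<-1$ lies to the left of both roots. The paper proves $r_{A_0}>-1$ by squaring the inequality $2\alpha_{A_0}-\beta_{A_0}>\gamma_{A_0}$ and invoking the identity (\ref{alphabetagamma}); your observation that $\rho'_{A_0}=-(\beta_{A_0}+\gamma_{A_0})/(2\alpha_{A_0})=-p_{A_0}$, with $p_{A_0}\in X=[0,1]$ the fixed point of $T_{A_0}$, is a slicker and more conceptual route to the same bound.

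Two small points. First, in your alternative computation you have an arithmetic slip: $q_{A_0}(-1)=\alpha_{A_0}-\beta_{A_0}-b_0=c_0$, not $c_0+d_0+b_0$ (still strictly positive, so the conclusion stands). Second, that alternative is in fact perfectly clean and not circular, contrary to your closing worry: $q_{A_0}(-1)>0$ together with $q_{A_0}$ opening upward and $\rho_{A_0}>0>-1$ forces $-1$ to lie outside $[\rho'_{A_0},\rho_{A_0}]$ on the left, i.e.\ $\rho'_{A_0}\ge -1$; hence $\rho_{A_1}<-1\le\rho'_{A_0}$ and $q_{A_0}(\rho_{A_1})>0$. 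So either route works; the $-p_{A_0}$ identification is simply the more elegant of the two.
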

\begin{proof}
The larger root of $q_{A_1}$ is $\rho_{A_1}$, by Lemma \ref{rhoquadratic}.
It follows that $q_{A_1}(z)= \alpha_{A_1} z^2 +\beta_{A_1} z-b_1 <0$ for all $z>\rho_{A_1}$, since 
the leading coefficient $\alpha_{A_1}<0$, since $A_1$ is projectively convex.
But by Lemmas \ref{concaveequivconditions} and \ref{convexequivconditions}
 we know that $\rho_{A_0} > 0 > -1 > \rho_{A_1}$, so indeed
$q_{A_1}(\rho_{A_0}) < 0$, as required.

The smaller root of $q_{A_0}$, which we shall denote by $r_{A_0}$, is given by
$$
r_{A_0} = \frac{-(\gamma_{A_0}+\beta_{A_0})}{2\alpha_{A_0}}\,.
$$
It follows that 
\begin{equation}\label{qlarger}
q_{A_0}(z)= \alpha_{A_0} z^2 +\beta_{A_0} z-b_0 >0
\quad\text{for all }
z<r_{A_0}\,,
\end{equation} 
since 
the leading coefficient $\alpha_{A_0}>0$, since $A_0$ is projectively concave.
Now $\rho_{A_1}<-1$ by Lemma \ref{convexequivconditions}, and if we can show that $r_{A_0}> -1$
then it follows that $\rho_{A_1}<r_{A_0}$, and hence $q_{A_0}(\rho_{A_1})>0$ by (\ref{qlarger}).

To show that indeed $r_{A_0}> -1$, note that this inequality is equivalent to $2\alpha_A-\beta_A>\gamma_A$.
Both sides are positive, so this is equivalent to
$(2\alpha_A-\beta_A)^2>\gamma_A^2$,
which
using (\ref{alphabetagamma})
 becomes $4\alpha_A(\alpha_A-\beta_A) > 4b\alpha_A$.
 This latter inequality is equivalent to $\alpha_A-\beta_A>b$, which is true because in fact
 $\alpha_A-\beta_A=b+c>b$.
\end{proof}

We deduce the following technical lemma, which will be used in \S \ref{dominatessec}:

\begin{lemma}\label{techderivlemma}
If $\A\in\mm$ then the M\"obius function
\begin{equation*}
x\mapsto \frac{x+\rho_{A_0}}{(b_1+d_1-\alpha_{A_1}\rho_{A_0})x +(a_1-b_1)\rho_{A_0}-b_1}
\end{equation*}
has strictly negative derivative,
while
the M\"obius function
\begin{equation*}
x\mapsto \frac{x+\rho_{A_1}}{(b_0+d_0-\alpha_{A_0}\rho_{A_1})x +(a_0-b_0)\rho_{A_1}-b_0}
\end{equation*}
has strictly positive derivative.
\end{lemma}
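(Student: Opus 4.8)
The plan is to reduce both claims to facts already established, namely Lemma~\ref{qpolylemma} (the determinantal presentation of $q_A$) and Lemma~\ref{qrhoineq} (the sign of $q_{A_1}(\rho_{A_0})$ and of $q_{A_0}(\rho_{A_1})$). The key elementary observation is that a M\"obius function $x\mapsto \frac{px+q}{rx+s}$ has derivative
$$\frac{d}{dx}\left(\frac{px+q}{rx+s}\right) = \frac{ps-qr}{(rx+s)^2}\,,$$
so wherever it is defined its derivative has constant sign, equal to that of $ps-qr = \det\begin{pmatrix} p & q \\ r & s\end{pmatrix}$.

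First I would handle the first M\"obius function. Writing it in the form $\frac{px+q}{rx+s}$ with $p=1$, $q=\rho_{A_0}$, $r=b_1+d_1-\alpha_{A_1}\rho_{A_0}$ and $s=(a_1-b_1)\rho_{A_0}-b_1$, the relevant cross-determinant is
$$ps-qr = \det\begin{pmatrix} 1 & \rho_{A_0} \\ b_1+d_1-\alpha_{A_1}\rho_{A_0} & (a_1-b_1)\rho_{A_0}-b_1\end{pmatrix}\,,$$
and by Lemma~\ref{qpolylemma}, applied with $A=A_1$ and $z=\rho_{A_0}$, this equals $q_{A_1}(\rho_{A_0})$. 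Lemma~\ref{qrhoineq} gives $q_{A_1}(\rho_{A_0})<0$; in particular this quantity is nonzero, so the map is a genuine (non-degenerate) M\"obius function, and its derivative is strictly negative wherever it is defined.

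The second M\"obius function is treated identically, now with $p=1$, $q=\rho_{A_1}$, $r=b_0+d_0-\alpha_{A_0}\rho_{A_1}$ and $s=(a_0-b_0)\rho_{A_1}-b_0$: the cross-determinant is $q_{A_0}(\rho_{A_1})$ by Lemma~\ref{qpolylemma} (with $A=A_0$ and $z=\rho_{A_1}$), which is strictly positive by Lemma~\ref{qrhoineq}, so the derivative is strictly positive wherever defined. There is essentially no obstacle here: the entire content of the lemma is the recognition of the two cross-determinants as the values $q_{A_1}(\rho_{A_0})$ and $q_{A_0}(\rho_{A_1})$ via Lemma~\ref{qpolylemma}, after which the required sign information is exactly Lemma~\ref{qrhoineq}.
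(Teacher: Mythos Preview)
Your proof is correct and essentially identical to the paper's own argument: compute the cross-determinant of each M\"obius map, recognise it via Lemma~\ref{qpolylemma} as $q_{A_1}(\rho_{A_0})$ and $q_{A_0}(\rho_{A_1})$ respectively, and conclude using the sign information from Lemma~\ref{qrhoineq}.
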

\begin{proof}
A general M\"obius map $x\mapsto \frac{Px+Q}{Rx+S}$ has derivative
$D(Rx+S)^{-2}$, where 
$D=PS-QR$,
so the derivative is strictly negative if $D<0$, and strictly positive if $D>0$.
For our first M\"obius map we have
$$
D=\det \begin{pmatrix} 1 & \rho_{A_0} \\  b_1 +d_1 -\alpha_{A_1} \rho_{A_0} & (a_1-b_1) \rho_{A_0} -b_1  \end{pmatrix}
= q_{A_1}(\rho_{A_0})
$$
by Lemma \ref{qpolylemma},
and $q_{A_1}(\rho_{A_0})$ is strictly negative by Lemma \ref{qrhoineq}, so the derivative of the map is strictly negative, as required.

For our second M\"obius map we have
$$
D=\det \begin{pmatrix} 1 & \rho_{A_1} \\  b_0 +d_0 -\alpha_{A_0} \rho_{A_1} & (a_0-b_0) \rho_{A_1} -b_0  \end{pmatrix}
= q_{A_0}(\rho_{A_1})
$$
by Lemma \ref{qpolylemma},
and $q_{A_0}(\rho_{A_1})$ is strictly positive by Lemma \ref{qrhoineq}, so the derivative of the map is strictly positive, as required.
\end{proof}

\section{The induced dynamical system for a concave-convex matrix pair}\label{induceddynsyssection}

\subsection{The induced dynamical system and joint spectral radius}

\begin{defn}\label{induceddefn}
For a matrix pair $\A=(A_0,A_1)\in\mm$,
define the
\emph{induced space} $X_\A$ to be
$$
X_\A = X_{A_0}\cup X_{A_1}\,,
$$
and define the \emph{induced dynamical system}
$T_\A:X_\A\to X$ by
\begin{equation*}
T_\A(x) = 
\begin{cases}
S_{A_0}(x)&\text{if } x\in X_{A_0}\\
S_{A_1}(x)&\text{if }x\in X_{A_1}\,.
\end{cases}
\end{equation*}
\end{defn}

\begin{remark}\label{taremarks}
\item[\, (a)]
The map $T_\A:X_\A\to X$ is Lipschitz continuous since $X_\A=X_{A_0}\cup X_{A_1}$ is the union of disjoint intervals $X_{A_0}$ and $X_{A_1}$, and the restriction of $T_\A$ to $X_{A_i}$ is the M\"obius mapping $S_{A_i}$, which is certainly Lipschitz continuous.
\item[\, (b)]
Note that the (surjective) induced dynamical system $T_\A$ is naturally defined as a mapping
from $X_\A$ to $X=[0,1]$.
To view it as a surjective \emph{self}-mapping of some set (the natural setting for a dynamical system) we consider its restriction to 
the \emph{induced Cantor set}
$Y_\A:=\cap_{n\ge0}T_\A^{-n}(X_\A)$,
and note that $T_\A:Y_\A\to Y_\A$ is topologically conjugate to the shift map 
$\sigma$ on $\Omega = \{0,1\}^\N$.
\end{remark}

\begin{prop}\label{jsrderivta}
If $\A=(A_0,A_1)\in\mm$ then its joint spectral radius $r(\A)$ satisfies
\begin{equation}\label{jsrderivtaeq}
r(\A)=
\sup_{\underline{i}\in \Omega^*} 
\left( \det A(\underline{i}) \,\, (T_{\A}^{|\underline{i}|})' (p_{A(\underline{i} )}) \right)^{1/2|\underline{i}|}\,.
\end{equation}
\end{prop}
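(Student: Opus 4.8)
The plan is to deduce Proposition~\ref{jsrderivta} directly from Proposition~\ref{jsrderiv}, by showing that for each finite word $\underline{i}=(i_1,\ldots,i_n)\in\Omega^*$ one has the identity $(T_\A^{n})'(p_{A(\underline{i})})=1/T_{A(\underline{i})}'(p_{A(\underline{i})})$. The starting point is that $T_A$ is the M\"obius map represented by $PAP^{-1}$ (noted just after Definition~\ref{taxasa}), so that multiplicativity $P(AB)P^{-1}=(PAP^{-1})(PBP^{-1})$ gives the composition rule $T_{AB}=T_A\circ T_B$ for positive matrices $A,B$. Iterating, $T_{A(\underline{i})}=T_{A_{i_1}}\circ\cdots\circ T_{A_{i_n}}$, and since each $T_{A_i}$ is an orientation-preserving bijection of $X$ onto $X_{A_i}$ with inverse $S_{A_i}$ (Definition~\ref{taxasa}, Remark~\ref{derivativesremark}(a)), passing to inverses yields
\[
S_{A(\underline{i})}:=T_{A(\underline{i})}^{-1}=S_{A_{i_n}}\circ S_{A_{i_{n-1}}}\circ\cdots\circ S_{A_{i_1}}\,.
\]

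Next I would check that the iterate $T_\A^{\,n}$ is defined on the interval $X_{A(\underline{i})}=T_{A(\underline{i})}(X)$ and coincides there with $S_{A(\underline{i})}$. Unwinding $X_{A(\underline{i})}=T_{A_{i_1}}\big(T_{A_{i_2}}\big(\cdots T_{A_{i_n}}(X)\cdots\big)\big)$ shows that every $x\in X_{A(\underline{i})}$ lies in $X_{A_{i_1}}$, that $S_{A_{i_1}}(x)$ lies in $X_{A_{i_2}}$, and so on, so that successively applying the branches of $T_\A$ reproduces $S_{A_{i_n}}\circ\cdots\circ S_{A_{i_1}}(x)=S_{A(\underline{i})}(x)$; the disjointness of $X_{A_0}$ and $X_{A_1}$ (Proposition~\ref{concavconvexprop}) is what makes the branch choice at each step unambiguous. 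Since $p_{A(\underline{i})}=T_{A(\underline{i})}(p_{A(\underline{i})})\in T_{A(\underline{i})}(X)=X_{A(\underline{i})}$, this applies at $x=p_{A(\underline{i})}$, giving $T_\A^{\,n}(p_{A(\underline{i})})=S_{A(\underline{i})}(p_{A(\underline{i})})=p_{A(\underline{i})}$; i.e.\ $p_{A(\underline{i})}$ is the $T_\A$-periodic point with itinerary $\overline{\underline{i}}$ (one could instead invoke the conjugacy of Remark~\ref{taremarks}(b)).

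Differentiating and using the inverse function theorem together with the fixed-point property $S_{A(\underline{i})}(p_{A(\underline{i})})=p_{A(\underline{i})}$ then gives
\[
(T_\A^{|\underline{i}|})'(p_{A(\underline{i})})=S_{A(\underline{i})}'(p_{A(\underline{i})})=\frac{1}{T_{A(\underline{i})}'\big(S_{A(\underline{i})}(p_{A(\underline{i})})\big)}=\frac{1}{T_{A(\underline{i})}'(p_{A(\underline{i})})}\,,
\]
both quantities being positive because $\det A(\underline{i})=\prod_j \det A_{i_j}>0$ forces $T_{A(\underline{i})}'>0$ on $X$. Hence $\det A(\underline{i})\,(T_\A^{|\underline{i}|})'(p_{A(\underline{i})})=\det A(\underline{i})/T_{A(\underline{i})}'(p_{A(\underline{i})})$, and substituting this into the formula of Proposition~\ref{jsrderiv} and taking the supremum over $\underline{i}\in\Omega^*$ yields~(\ref{jsrderivtaeq}).

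The only genuinely non-routine step is the domain bookkeeping in the middle paragraph: verifying that the forward orbit of $p_{A(\underline{i})}$ under $T_\A$ visits the branch domains $X_{A_{i_1}},X_{A_{i_2}},\ldots$ in exactly the order dictated by $\underline{i}$, so that $T_\A^{\,n}$ genuinely unfolds as $S_{A(\underline{i})}$ at that point. Once this combinatorial matching is established, the proposition is a one-line consequence of Proposition~\ref{jsrderiv} and multiplicativity of the determinant.
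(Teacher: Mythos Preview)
Your argument is correct and follows essentially the same route as the paper: both proofs reduce to the identity $(T_\A^{|\underline{i}|})'(p_{A(\underline{i})})=1/T_{A(\underline{i})}'(p_{A(\underline{i})})$ and then invoke Proposition~\ref{jsrderiv}. The paper obtains this identity more tersely by noting that $T_\A^n\circ T_{A(\underline{i})}$ is the identity on $X$ and differentiating via the chain rule at $x=p_{A(\underline{i})}$, whereas you spell out the equivalent branch-tracking verification that $T_\A^n|_{X_{A(\underline{i})}}=S_{A(\underline{i})}$; your version is the same argument with the bookkeeping made explicit.
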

\begin{proof}
From Definition \ref{induceddefn} we see that $T_\A\circ T_{A_i}$ is the identity map on $X$, 
for $i\in\{0,1\}$, since $T_\A$ is defined in terms of the inverses $S_{A_i}=T_{A_i}^{-1}$. 
Similarly, for any $\underline{i}\in\{0,1\}^n$ we see that $T_\A^n\circ T_{A(\underline{i})}$ is also the
identity map on $X$, so
$(T_\A^n)'(T_{A(\underline{i})}(x)) \, T_{A(\underline{i})}'(x) =1$
for all $x\in X$, by the chain rule. Setting $x=p_{A(\underline{i})} = T_{A(\underline{i})}(p_{A(\underline{i})})$ we obtain
\begin{equation}\label{inversederivformu}
(T_\A^n)'(p_{A(\underline{i})}) = \frac{1}{ T_{A(\underline{i})}'( p_{A(\underline{i})} )}  \,,
\end{equation}
and combining this with (\ref{jsrderiveq}) gives the required formula (\ref{jsrderivtaeq}).
\end{proof}

\subsection{Invariant measures for the induced dynamical system}

\begin{defn}
For $\A\in\mm$,
let $\m_\A$ denote the set
of $T_\A$-invariant Borel probability measures on $X=[0,1]$;
the support of any such measure is contained in $Y_\A=\cap_{n\ge0} T_\A^{-n}(X_\A)$.
\end{defn}

The following is a well known consequence of the compactness of $Y_\A$ and continuity of $T_\A$
(see e.g.~\cite[Thm.~6.10]{walters}):

\begin{lemma}\label{macompactlemma}
The set $\m_\A$ is compact with respect to the weak$^*$ topology.
\end{lemma}

\begin{defn}\label{periodicorbitmeasuredefn}
For any $p\in X_\A$ that is a periodic point for $T_\A$, with $T_\A^n(p)=p$, we say that the 
probability measure $\mu\in\m_\A$ defined by
\begin{equation}
\mu
=\frac{1}{n}\sum_{j=0}^{n-1} \delta_{T_\A^j (p)}
\end{equation}
is the corresponding \emph{periodic orbit measure} (or \emph{$T_\A$-periodic orbit measure}).
\end{defn}

\begin{remark}\label{conjugacymeasures}
The topological conjugacy  $h_\A:\Omega\to Y_\A$ between the shift map $\sigma:\Omega\to\Omega$
and $T_\A:Y_\A\to Y_\A$ (cf.~Remark \ref{taremarks} (b))
induces a one-to-one correspondence 
$h_\A^*:\m\to\m_\A$ between invariant measures.
\end{remark}

\begin{defn}\label{eomax}
For a bounded Borel function $f:X_\A\to\mathbb{R}$,
a measure $m\in\m_\A$ is called \emph{$f$-maximizing} if
$$
\int f\, dm = \sup_{\mu\in\m_\A} \int f\, d\mu 
\,.
$$
\end{defn}

In the generality of Definition \ref{eomax}, the notion of  an $f$-maximizing invariant measure
is part of the wider field of
so-called ergodic optimization, see e.g.~\cite{jeo}.

\begin{defn}
For $\A=(A_0,A_1)\in\mm$, define the \emph{induced function} $f_\A:X_\A\to\mathbb{R}$ by
\begin{equation}\label{function}
f _\A =  \frac{1}{2}\left( \log T_\A' +(\log \det A_0)\mathbbm{1}_{X_{A_0}}   +(\log \det A_1)\mathbbm{1}_{X_{A_1}}  \right)  \,. \
\end{equation}
That is,
\begin{equation}\label{fthatis}
f_\A(x)=
\begin{cases}
\frac{1}{2}\left( \log S_{A_0}'(x) + \log \det A_0  \right) &\text{if }x\in X_{A_0}   \\
\frac{1}{2} \left(\log S_{A_1}'(x) + \log \det A_1 \right) &\text{if }x\in X_{A_1}   \,,
\end{cases}
\end{equation}
so writing $A_i= \begin{pmatrix} a_i&b_i\\c_i&d_i\end{pmatrix} $ gives
\begin{equation}\label{explicitfalt}
f_\A(x) =  \log  \left( \frac{\det A_i}{-\alpha_{A_i}(x+ \sigma_{A_i})} \right)
\quad\text{for }x\in X_{A_i}\,,
\end{equation}
where we recall from
Corollary \ref{alwaysposcor} that $\frac{\det A_i}{-\alpha_{A_i}(x+ \sigma_{A_i})}>0$
for all $x\in X_{A_i}$.
\end{defn}

\begin{remark}\label{faremark}
The function $f_\A$ is clearly Lipschitz continuous on each $X_{A_i}$, hence Lipschitz continuous
on $X_\A = X_{A_0}\cup X_{A_1}$, since the intervals $X_{A_0}$ and $X_{A_1}$ are disjoint.
\end{remark}

The reason for introducing the function $f_\A$ is provided by the following characterisation of the joint spectral radius in terms of ergodic optimization:

\begin{theorem}\label{maxtheorem}
If $\A=(A_0,A_1)\in\mm$ then its joint spectral radius $r(\A)$ satisfies
\begin{equation}\label{logra}
\log r(\A) = 
\max_{\mu\in\m_\A} \int f_\A\, d\mu  \,.
\end{equation}
\end{theorem}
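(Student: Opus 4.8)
The plan is to combine the formula for the joint spectral radius from Proposition \ref{jsrderivta} with the variational principle for the subadditive/additive ergodic average, exploiting the fact that $f_\A$ is a Birkhoff sum for the induced system. First I would observe that, since $T_\A^n \circ T_{A(\underline i)}$ is the identity on $X$ for $\underline i \in \{0,1\}^n$, the point $p_{A(\underline i)}$ is the unique fixed point of $T_\A^n$ lying in the cylinder $X_{\underline i} := T_{A(\underline i)}(X)$, i.e.~it is a periodic point of $T_\A$ of period (dividing) $n$. Writing $x_k = T_\A^k(p_{A(\underline i)})$ for the points of this periodic orbit, the chain rule gives
\begin{equation*}
\log (T_\A^n)'(p_{A(\underline i)}) = \sum_{k=0}^{n-1} \log T_\A'(x_k)\,,
\end{equation*}
and similarly $\log \det A(\underline i) = \sum_{k=0}^{n-1} \log \det A_{i_{k+1}} = \sum_{k=0}^{n-1} \bigl( (\log\det A_0)\mathbbm{1}_{X_{A_0}}(x_k) + (\log\det A_1)\mathbbm{1}_{X_{A_1}}(x_k) \bigr)$, because $x_k \in X_{A_{i_{k+1}}}$ exactly when the $(k+1)$st symbol of the itinerary is $i_{k+1}$. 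Hence by the definition \eqref{function} of $f_\A$,
\begin{equation*}
\tfrac{1}{2}\log\bigl( \det A(\underline i)\,(T_\A^{n})'(p_{A(\underline i)})\bigr) = \sum_{k=0}^{n-1} f_\A(x_k) = \int f_\A \, d\mu_{\underline i}\cdot n\,,
\end{equation*}
where $\mu_{\underline i} = \tfrac{1}{n}\sum_{k=0}^{n-1}\delta_{x_k}$ is the $T_\A$-invariant probability measure uniformly distributed on this periodic orbit. Taking logarithms in \eqref{jsrderivtaeq} thus yields
\begin{equation*}
\log r(\A) = \sup_{\underline i \in \Omega^*} \frac{1}{2|\underline i|}\log\bigl( \det A(\underline i)\,(T_\A^{|\underline i|})'(p_{A(\underline i)})\bigr) = \sup_{\underline i \in \Omega^*} \int f_\A\, d\mu_{\underline i}\,.
\end{equation*}

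Next I would upgrade this supremum over periodic-orbit measures to the maximum over all of $\m_\A$. One inclusion is immediate: each $\mu_{\underline i}$ lies in $\m_\A$, so $\log r(\A) \le \sup_{\mu\in\m_\A}\int f_\A\,d\mu$. For the reverse inequality I would use that periodic-orbit measures are weak-$*$ dense in $\m_\A$ — this follows from the topological conjugacy with the full shift $\sigma$ on $\Omega$ (Remark \ref{taremarks}(b) and Remark \ref{conjugacymeasures}), together with the classical density of periodic measures in the space of shift-invariant measures, e.g.~via specification or a direct symbolic approximation. Since $f_\A$ is continuous on $X_\A$ (Remark \ref{faremark}) and $X_{A_0}, X_{A_1}$ are disjoint compact intervals, the map $\mu \mapsto \int f_\A\, d\mu$ is weak-$*$ continuous on $\m_\A$; density of periodic measures then gives $\sup_{\mu\in\m_\A}\int f_\A\,d\mu = \sup_{\underline i}\int f_\A\,d\mu_{\underline i} = \log r(\A)$. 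Compactness of $\m_\A$ (inherited from compactness of $\m$, \cite[Thm.~6.10]{walters}, via the homeomorphism $h_\A^*$) together with this continuity ensures the supremum is attained, so it is a maximum, giving \eqref{logra}.

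The main obstacle, and the point requiring the most care, is the weak-$*$ continuity of $\mu\mapsto\int f_\A\,d\mu$ combined with the density step: $f_\A$ is only defined and continuous on $X_\A = X_{A_0}\cup X_{A_1}$, not on all of $[0,1]$, and it is unbounded near the endpoints of these intervals only if $x+\sigma_{A_i}$ vanishes there — but Corollary \ref{alwaysposcor} guarantees $-\alpha_{A_i}(x+\sigma_{A_i})>0$ throughout $X_{A_i}$, so in fact $f_\A$ is bounded and Lipschitz on the compact set $X_\A$, and every $\mu\in\m_\A$ is supported on $Y_\A\subset X_\A$; thus $\int f_\A\,d\mu$ is well-defined and the integration pairing is genuinely weak-$*$ continuous. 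The density of periodic measures is a standard fact for the full shift, transported through $h_\A$; one could alternatively invoke the subadditive ergodic theorem / variational principle for the top Lyapunov exponent directly, but the periodic-orbit route is cleanest here because Proposition \ref{jsrderivta} has already packaged the joint spectral radius precisely as a supremum over periodic data.
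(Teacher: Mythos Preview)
Your argument is correct and follows essentially the same route as the paper: rewrite the expression in Proposition~\ref{jsrderivta} as a Birkhoff average of $f_\A$ along the periodic orbit through $p_{A(\underline i)}$, identify this with $\int f_\A\,d\mu_{\underline i}$, and then pass from the supremum over periodic-orbit measures to the maximum over $\m_\A$ using weak-$*$ density of periodic measures (the paper cites Parthasarathy and Sigmund directly rather than going via the shift conjugacy) together with continuity of $f_\A$ and compactness of $\m_\A$.
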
  
\begin{proof}
From Proposition \ref{jsrderivta} we have
\begin{equation}\label{jsrderivtalog} 
\log r(\A)=
\sup_{\underline{i}\in \Omega^*} 
\log \left( \det A(\underline{i}) \,\, (T_{\A}^{|\underline{i}|})' (p_{A(\underline{i} )}) \right)^{1/2|\underline{i}|}\,.
\end{equation}

If $\underline{i}\in\{0,1\}^n$ then
\begin{align}\label{falogdet}
\log \left( \det A(\underline{i}) \,\, (T_{\A}^n)' (p_{A(\underline{i} )}) \right)^{1/2|\underline{i}|}
&=
\frac{1}{2n} \left( \log \det \left( A_{i_1}\cdots  A_{i_n} \right) + \log (T_{\A}^n)' (p_{A(\underline{i} )}) \right) \cr
&=
\frac{1}{2n} \left( \log \prod_{j=1}^n \det A_{i_j}  + \log  \prod_{j=0}^{n-1}  T_{\A}' (T_\A^j (p_{A(\underline{i} )})) \right) \cr
&=
\frac{1}{n} \sum_{j=0}^{n-1}\  \frac{1}{2}\left( \log \det A_{i_{j+1}}  + \log   T_{\A}' (T_\A^j (p_{A(\underline{i} )})) \right) \cr
&=
\frac{1}{n} \sum_{j=0}^{n-1} f_\A(T_\A^j (p_{A(\underline{i} )})) \,,
\end{align}
where the last step uses (\ref{function}) together with the fact that
$$
\log \det A_{i_{j+1}}
=
\left( \left( \log \det A_0 \right) \mathbbm{1}_{X_{A_0}}   +(\log \det A_1)\mathbbm{1}_{X_{A_1}} \right)(T_\A^j (p_{A(\underline{i} )})) \,,
$$
because
\begin{equation*}
\mathbbm{1}_{X_{A_k}}(T_\A^j (p_{A(\underline{i} )}))
=
\begin{cases}
1& \text{if } i_{j+1}=k\cr
0& \text{if } i_{j+1}\neq k \,.
\end{cases}
\end{equation*}
Combining (\ref{jsrderivtalog}) and (\ref{falogdet}) gives
\begin{equation}\label{perptmeasures}
\log r(\A)=
\sup_{\underline{i}\in \Omega^*} 
\frac{1}{|\underline{i}|} \sum_{j=0}^{|\underline{i}|-1} f_\A(T_\A^j (p_{A(\underline{i} )})) 
= \sup_{\underline{i}\in \Omega^*} 
\int f_\A\, d\mu_{\underline{i}}
\,,
\end{equation}
where 
$$\mu_{\underline{i}}=\frac{1}{|\underline{i}|}\sum_{j=0}^{|\underline{i}|-1} \delta_{T_\A^j (p_{A(\underline{i} )})}$$
is the 
periodic orbit measure (see Definition \ref{periodicorbitmeasuredefn})
for the period-$|\underline{i}|$ point 
$p_{A(\underline{i} )}=T_\A^{|\underline{i}|}(p_{A(\underline{i} )})$, i.e.~the
unique measure in $\m_\A$ whose support equals the
periodic orbit $\{ T_\A^j (p_{A(\underline{i} )})\}_{j= 0}^{ |\underline{i}|-1}$.
By a result of Parthasarathy \cite{parthasarathy} (see also Sigmund \cite{sigmund}),
the collection of periodic orbit measures $\{\mu_{\underline{i}}:\underline{i}\in\Omega^*\}$
is weak$^*$ dense in the weak$^*$ compact space $\m_\A$, so 
\begin{equation}\label{parthsigmund}
\sup_{\underline{i}\in \Omega^*} 
\int f_\A\, d\mu_{\underline{i}}
= \max_{\mu\in\m_\A} \int f_\A\, d\mu \,,
\end{equation}
and combining with (\ref{perptmeasures}) gives the required equality
(\ref{logra}).
\end{proof}

\subsection{The finiteness property and periodic orbits}

In view of Theorem \ref{maxtheorem}, we shall be interested in those measures $m\in\m_\A$ which
are $f_\A$-maximizing, in the sense of Definition \ref{eomax},
 i.e.~$m$ attains the maximum in (\ref{logra}): $\log r(\A) = 
\max_{\mu\in\m_\A} \int f_\A\, d\mu  =\int f_\A\, dm$.
The finiteness property for $\A$ 
(which we recall means that $r(\A)=r(A_{i_1}\cdots A_{i_n})^{1/n}$
for some $i_1,\ldots, i_n\in\{0,1\}$)
corresponds to existence of a periodic orbit measure which
is $f_\A$-maximizing:

\begin{prop}\label{finitenessperiodicequivalenceprop}
$\A\in\mm$ has the finiteness property if and only if some $T_\A$-periodic orbit measure is $f_\A$-maximizing.
\end{prop}
\begin{proof}
If $\A\in\mm$ has the finiteness property, 
and $\underline{i}\in\{0,1\}^n$ satisfies $r(\A)= r(A(\underline{i}))^{1/n}$,
then we claim that the corresponding periodic orbit measure
$\mu_{\underline{i}}=\frac{1}{n}\sum_{j=0}^{n-1} \delta_{T_\A^j (p_{A(\underline{i} )})}$
is $f_\A$-maximizing.
To see this, first note that (\ref{logra}) gives
\begin{equation}\label{firstnote}
\log r(A(\underline{i}))^{1/n} = \log r(\A) = \max_{\mu\in\m_\A} \int f_\A\, d\mu\,,
\end{equation}
and the lefthand side of (\ref{firstnote}) can be written as
\begin{equation}\label{intermediate1}
\log r(A(\underline{i}))^{1/n} = \log \left( \frac{\det A(\underline{i})}{T_{A(\underline{i})}'(p_{A(\underline{i})})}\right)^{1/2n}
= \log \left( \det A(\underline{i}) \,\, (T_{\A}^n)' (p_{A(\underline{i} )}) \right)^{1/2n}
\end{equation}
using Corollary \ref{specradiusta} and (\ref{inversederivformu}),
and therefore (\ref{falogdet}) gives
\begin{equation}\label{intermediate2}
\log r(A(\underline{i}))^{1/n} = \frac{1}{n} \sum_{j=0}^{n-1} f_\A(T_\A^j (p_{A(\underline{i} )})) 
=\int f_\A\, d\mu_{\underline{i}}\,,
\end{equation}
so (\ref{firstnote}) implies that
$\mu_{\underline{i}}$
is indeed $f_\A$-maximizing.

Conversely, if some $T_\A$-periodic orbit measure is $f_\A$-maximizing,
then this measure is necessarily of the form $\mu_{\underline{i}}=\frac{1}{n}\sum_{j=0}^{n-1} \delta_{T_\A^j (p_{A(\underline{i} )})}$
for some $n\in\N$ and $\underline{i}\in\{0,1\}^n$,
and satisfies
$\int f_\A\, d\mu_{\underline{i}} = \max_{\mu\in\m_\A} \int f_\A\, d\mu$.
Combining
 (\ref{logra}) with
 (\ref{intermediate1}) and (\ref{intermediate2}) gives
 $\log r(A(\underline{i}))^{1/n} = \log r(\A)$, so $\A$ has the finiteness property, as required.
\end{proof}

Note that the above proof has also established:

\begin{prop}
Suppose that $\A\in\mm$,
and $\underline{i}\in\{0,1\}^n$ for some $n\in\N$.
Then $r(\A)= r(A(\underline{i}))^{1/n}$
if and only if the periodic orbit measure
$\mu_{\underline{i}}=\frac{1}{n}\sum_{j=0}^{n-1} \delta_{T_\A^j (p_{A(\underline{i} )})}$
is $f_\A$-maximizing.
\end{prop}

Recall that we say $\A\in\mm$ is a \emph{finiteness counterexample} if
 $r(\A) > r(A_{i_1}\cdots A_{i_n})^{1/n}$
for all $n\in\N$ and all choices $i_1,\ldots, i_n\in\{0,1\}$. We have:

\begin{prop}\label{counterexampleequivalenceprop}
$\A\in\mm$ is a finiteness counterexample if and only if no $T_\A$-periodic orbit measure is $f_\A$-maximizing.
In this case there exists at least one measure $\mu\in\m_\A$ that is $f_\A$-maximizing, and there exist uncountably many sequences $\omega\in\Omega$ such that
\begin{equation}
\label{inthelimit}
r(\A) = \lim_{n\to\infty} r(A_{\omega_1}\cdots A_{\omega_n})^{1/n} \,.
\end{equation}
\end{prop}
\begin{proof}
The first statement is equivalent to that of Proposition \ref{finitenessperiodicequivalenceprop}, while the existence of
an $f_\A$-maximizing measure $\mu$ is a consequence 
(see e.g.~\cite[Prop.~2.4 (i)]{jeo})
of the continuity of $f_\A$ and the weak$^*$ compactness of
$\m_\A$ (see Lemma \ref{macompactlemma}). In fact
$\mu$ may be chosen to be an ergodic measure,
since it is readily shown that the set of $f_\A$-maximizing measures is convex, and any of its extremal points 
is ergodic (see e.g.~\cite[Prop.~2.4]{jeo}).
The ergodic
theorem (see e.g.~\cite[Thm.~1.14]{walters}) then implies that
\begin{equation}\label{inthelimit2}
\log r(\A) = \int f_\A\, d\mu = \lim_{n\to\infty} \frac{1}{n}\sum_{j=0}^{n-1} f_\A(T_\A^j(p)) 
\end{equation}
for $\mu$-almost every $p\in Y_\A$.
Since no periodic orbit measure is $f_\A$-maximizing, the measure $\mu$ 
must have uncountable support, and therefore
(\ref{inthelimit2}) holds for an uncountable set of points $p\in Y_\A$.

We may use the topological conjugacy $h_\A:\Omega\to Y_\A$
to define the image measure $m=(h_\A^{-1})^*(\mu)$, which also has uncountable support, and
if we write $h_\A(\omega)=p$ then
\begin{equation}\label{sumproductexp}
r(A_{\omega_1}\cdots A_{\omega_n})^{1/n} = \exp\left( \frac{1}{n}\sum_{j=0}^{n-1} f_\A(T_\A^j(p)) \right) \,,
\end{equation} so
(\ref{inthelimit2}) implies $
r(\A) = \lim_{n\to\infty} r(A_{\omega_1}\cdots A_{\omega_n})^{1/n} 
$
for $m$-almost every $\omega\in\Omega$, hence for uncountably many $\omega\in\Omega$.
\end{proof}

\subsection{Monotonicity properties and formulae}

The following simple lemma records that for $\A\in\mm$, 
the induced dynamical system $T_{\A(t)}$
is independent of $t$, and that the induced function $f_{\A(t)}$ differs from $f_\A$
only by the addition of a scalar multiple of the characteristic function
for the image $X_{A_1}$.

\begin{lemma}\label{simple}
For $\A=(A_0,A_1)\in \mm$,
and all $t>0$,
\begin{itemize}
\item[(i)] $T_{\A(t)} = T_\A$ \,,
\item[(ii)] $f_{\A(t)} = f_\A + (\log t)\mathbbm{1}_{X_{A_1}}$,
\item[(iii)] 
$
f_{\A(t)}(T_{A_0}(1)) - f_{\A(t)}(T_{A_1}(0))
=
f_{\A}(T_{A_0}(1)) - f_{\A}(T_{A_1}(0)) - \log t\,,
$
\item[(iv)] $f_{\A(t)}'=f_\A'$, with 
\begin{equation}\label{fderivative}
f_\A' (x)= 
-( x+\sigma_{A_i})^{-1}
\quad\text{for }x\in X_{A_i}, \ i\in\{0,1\}\,.
\end{equation}
\end{itemize}
\end{lemma}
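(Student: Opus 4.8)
The plan is to verify each of the four assertions by direct computation, exploiting the explicit formulae already assembled in the preliminaries. The underlying point is that the objects $T_A$, $S_A$, $X_A$, $\rho_A$, $\sigma_A$ depend only on the \emph{projective} class of $A$, so multiplying $A_1$ by $t>0$ changes very little.

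\begin{proof}
For (i): by Remark \ref{invariantposmultiple} we have $T_{tA_1}=T_{A_1}$, hence $S_{tA_1}=S_{A_1}$ and $X_{tA_1}=X_{A_1}$. Also $T_{A_0}$, $S_{A_0}$, $X_{A_0}$ are unchanged. Thus $X_{\A(t)}=X_{A_0}\cup X_{tA_1}=X_{A_0}\cup X_{A_1}=X_\A$, and the two-branch map $T_{\A(t)}$, being $S_{A_0}$ on $X_{A_0}$ and $S_{tA_1}=S_{A_1}$ on $X_{A_1}$, coincides with $T_\A$ by Definition \ref{induceddefn}.

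For (ii): by (\ref{fthatis}) the function $f_{\A(t)}$ equals $\frac12(\log S_{A_0}'(x)+\log\det A_0)$ on $X_{A_0}$, which is exactly $f_\A(x)$ there; on $X_{A_1}$ it equals $\frac12(\log S_{tA_1}'(x)+\log\det(tA_1))$. Since $S_{tA_1}=S_{A_1}$ we have $S_{tA_1}'=S_{A_1}'$, while $\det(tA_1)=t^2\det A_1$, so $\log\det(tA_1)=\log\det A_1+2\log t$. Hence on $X_{A_1}$ the function $f_{\A(t)}$ equals $f_\A(x)+\log t$. Combining the two cases, $f_{\A(t)}=f_\A+(\log t)\mathbbm 1_{X_{A_1}}$.

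For (iii): by Proposition \ref{concavconvexprop}, $T_{A_0}(1)=\tfrac{a_0}{a_0+c_0}$ is the right endpoint of $X_{A_0}$, hence lies in $X_{A_0}$, while $T_{A_1}(0)=\tfrac{b_1}{b_1+d_1}$ is the left endpoint of $X_{A_1}$, hence lies in $X_{A_1}$. By (ii), $f_{\A(t)}(T_{A_0}(1))=f_\A(T_{A_0}(1))$ and $f_{\A(t)}(T_{A_1}(0))=f_\A(T_{A_1}(0))+\log t$. Subtracting gives the claimed identity.

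For (iv): differentiating $f_{\A(t)}=f_\A+(\log t)\mathbbm 1_{X_{A_1}}$ on the interior of each of the disjoint intervals $X_{A_0}$, $X_{A_1}$ shows $f_{\A(t)}'=f_\A'$, the added characteristic function being locally constant there. It remains to compute $f_\A'$. By (\ref{explicitfalt}), on $X_{A_i}$ we have $f_\A(x)=\log\bigl(\det A_i\bigr)-\log\bigl(-\alpha_{A_i}(x+\sigma_{A_i})\bigr)$, where the argument of the second logarithm is strictly positive by Corollary \ref{alwaysposcor}. Differentiating, $f_\A'(x)=-\tfrac{-\alpha_{A_i}}{-\alpha_{A_i}(x+\sigma_{A_i})}=-(x+\sigma_{A_i})^{-1}$, which is (\ref{fderivative}).
\end{proof}

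There is essentially no obstacle here: every assertion reduces to the scaling behaviour of the determinant and the invariance of the induced projective data, both recorded earlier. The only point requiring a moment's care is (iii), where one must note which of the two intervals contains each of the two endpoints so that the correct case of (ii) is applied; this is precisely what Proposition \ref{concavconvexprop} supplies.
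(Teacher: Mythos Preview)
Your proof is correct and follows essentially the same route as the paper's own proof: each part is verified by direct computation using Remark \ref{invariantposmultiple}, formula (\ref{fthatis}), and (\ref{explicitfalt}), exactly as the paper does. The only difference is that you spell out slightly more detail (e.g.\ in (iii) you explicitly invoke Proposition \ref{concavconvexprop} to locate the endpoints, where the paper simply says ``immediate from part (ii)'').
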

\begin{proof}
(i) From Remark \ref{invariantposmultiple} we see that if $t>0$ then
$T_{tA_1}=T_{A_1}$, hence
$T_{\A(t)} = T_\A$.

(ii) 
Formula (\ref{fthatis}) gives $f_\A = f_{\A(t)}$ on $X_{A_0}$,
while for $x\in X_{A_1}$ we have
$$f_{\A(t)}(x)  = \frac{1}{2}\left(  \log S_{A_1}'(x) + \log \det tA_1 \right)
= \log t +f_\A(x)$$
since $\log \det tA_1 = \log \left( t^2 \det A_1 \right) = 2\log t +\log \det A_1$,
thus
$f_{\A(t)} = f_\A + (\log t)\mathbbm{1}_{X_{A_1}}$.

(iii) This is immediate from part (ii).

(iv) The formula for $f_\A'$ follows readily from the explicit formula (\ref{explicitfalt}) for $f_\A$,
and is equal to $f_{\A(t)}'$ by (ii) above.
\end{proof}

\begin{lemma}\label{posnegf}
If $\A=(A_0,A_1)\in\mm$ then
\begin{enumerate}
\item[(i)] 
$f_\A'$ is strictly positive on $X_{A_0}$ and strictly negative on $X_{A_1}$, 
\item[(ii)]
$f_\A$ is strictly increasing on $X_{A_0}$ and strictly decreasing on $X_{A_1}$,
\item[(iii)]
$(f_\A\circ T_{A_0}^i)'(x)>0$ and $(f_\A\circ T_{A_1}^i)'(x)<0$ for all $i\ge1$, $x\in X$.
\end{enumerate}
\end{lemma}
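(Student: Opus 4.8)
The plan is to prove each of the three assertions in turn, deducing (ii) and (iii) from (i), and establishing (i) directly from the explicit derivative formula obtained in Lemma \ref{simple}(iv).

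\medskip

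First I would prove (i). By Lemma \ref{simple}(iv), for $x\in X_{A_i}$ we have $f_\A'(x) = -(x+\sigma_{A_i})^{-1}$. For $i=0$, Corollary \ref{alwaysposcor} tells us that $x+\sigma_{A_0}<0$ for all $x\in X_{A_0}$, so $-(x+\sigma_{A_0})^{-1}>0$, giving strict positivity of $f_\A'$ on $X_{A_0}$. For $i=1$, the same corollary gives $x+\sigma_{A_1}>0$ for all $x\in X_{A_1}$, so $-(x+\sigma_{A_1})^{-1}<0$, giving strict negativity of $f_\A'$ on $X_{A_1}$.

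\medskip

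Next, (ii) is immediate from (i): a function with strictly positive derivative on an interval is strictly increasing there, and one with strictly negative derivative is strictly decreasing. So $f_\A$ is strictly increasing on $X_{A_0}$ and strictly decreasing on $X_{A_1}$.

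\medskip

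Finally, for (iii) I would argue via the chain rule: $(f_\A\circ T_{A_i}^{\,i})'(x) = f_\A'(T_{A_i}^{\,i}(x))\cdot (T_{A_i}^{\,i})'(x)$. Here $T_{A_i}^{\,i}(x)$ lies in $X_{A_i}$ (since the image of $T_{A_i}$ is $X_{A_i}$, and $X_{A_i}\subset X$ so the iterates remain well-defined and land in $X_{A_i}$), so the sign of $f_\A'(T_{A_i}^{\,i}(x))$ is governed by part (i): positive for $i=0$, negative for $i=1$. It remains to note that $(T_{A_i}^{\,i})'(x)>0$: by Remark \ref{derivativesremark}(a) each $T_{A_i}$ is orientation preserving, i.e.~$T_{A_i}'>0$ everywhere, and the chain rule makes the derivative of any iterate a product of positive factors, hence positive. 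Multiplying the two signs gives $(f_\A\circ T_{A_0}^i)'>0$ and $(f_\A\circ T_{A_1}^i)'<0$, as claimed. I do not anticipate any real obstacle here; the only point requiring a moment's care is checking that the relevant iterates stay inside the correct induced image $X_{A_i}$, which follows from the definition of the induced image together with $X_{A_i}\subseteq X$.
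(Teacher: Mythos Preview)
Your proof is correct and follows essentially the same route as the paper: part (i) via the derivative formula $f_\A'(x)=-(x+\sigma_{A_i})^{-1}$ together with Corollary \ref{alwaysposcor}, part (ii) as an immediate consequence of (i), and part (iii) via the chain rule, using Remark \ref{derivativesremark}(a) for positivity of $(T_{A_j}^i)'$ and part (i) for the sign of $f_\A'$ at the image point. The only quibble is notational: in your treatment of (iii) you use the symbol $i$ simultaneously for the matrix index and for the iteration exponent (writing $T_{A_i}^{\,i}$), whereas in the lemma statement $i\ge 1$ is purely the iteration count and the matrix index is fixed at $0$ or $1$; the paper avoids this by writing $T_{A_j}^i$ with $j\in\{0,1\}$.
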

\begin{proof}
(i)
In view of  formula (\ref{fderivative}), 
it suffices to note that by Corollary \ref{alwaysposcor},
$
x+\sigma_{A_0} <0
$
for $x\in X_{A_0}$,
and 
$x+\sigma_{A_1} >0
$
for $x\in X_{A_1}$.

(ii) This is an immediate consequence of (i).

(iii) 
By the chain rule,
\begin{equation}\label{chain}
(f_\A\circ T_{A_j}^i)'(x) = f_\A'(T_{A_j}^i (x)) (T_{A_j}^i)'(x)\,.
\end{equation}

The second factor $(T_{A_j}^i)'(x)$ on the righthand side of (\ref{chain}) is strictly positive
for all $x\in X$, $i\ge1$, $j\in\{0,1\}$, since $T_{A_j}$ is orientation-preserving, as noted in Remark \ref{derivativesremark}.

Regarding the sign of the first factor $f_\A'(T_{A_j}^i (x))$ on the righthand side of (\ref{chain}),
note that since $i\ge1$ then $T_{A_j}^i(x)\in X_{A_j}=T_{A_j}(X)$ for all $x\in X$.
Part (i) above then implies that $f_\A'(T_{A_j}^i (x))$
is strictly positive when $j=0$ and strictly negative when $j=1$.
It follows that
$(f_\A\circ T_{A_j}^i)'(x)$ 
is strictly positive when $j=0$ and strictly negative when $j=1$, as required.
\end{proof}

For the purposes of the following Lemma \ref{usefullem}, it will be convenient to introduce the following notation:

\begin{notation}
For a matrix $A= \begin{pmatrix} a&b\\c&d\end{pmatrix} \in \na$,
define
$$
\delta_A = \frac{b+d}{\alpha_A} = \frac{b+d}{a+c-b-d}\,.
$$
\end{notation}

We can now give another characterisation of $\rho_A$:

\begin{lemma}\label{usefullem}
For $A\in \na$,
\begin{equation*}
\rho_A = \lim_{k\to \infty} \delta_{A^k}\,.
\end{equation*}
\end{lemma}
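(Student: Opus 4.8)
The plan is to interpret $\delta_{A^k}$ as the ratio of the second to the (difference of) coordinates of some vector associated with $A^k$, and then exploit the Perron--Frobenius structure of $A$. Recall from the remark after Definition~\ref{taxasa} that, conjugating by $P=\begin{pmatrix}1&0\\1&1\end{pmatrix}$, the M\"obius map $T_A$ is represented by $PAP^{-1}=\begin{pmatrix} a-b & b\\ \alpha_A & b+d\end{pmatrix}$. Hence $\delta_A=\frac{b+d}{\alpha_A}$ is precisely the ratio of the $(2,2)$ entry to the $(2,1)$ entry of $PAP^{-1}$; equivalently, writing $e_2=(0,1)$ (as a column vector), $\delta_A$ is the ratio of the second coordinate to the first coordinate of $PAP^{-1}e_2$, provided that first coordinate is nonzero. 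The key observation is that $T_{A^k}$ is represented by $PA^kP^{-1}=(PAP^{-1})^k$, so $\delta_{A^k}$ is the ratio of the second to the first coordinate of $(PAP^{-1})^k e_2$.

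First I would set $B=PAP^{-1}$ and note $B$ is similar to $A$, hence has the same eigenvalues: the Perron--Frobenius eigenvalue $\lambda_A$ and the other eigenvalue $\det A/\lambda_A$, with $|\det A/\lambda_A|<\lambda_A$ (strict, since $A$ is a positive matrix, so the spectral gap is genuine). Therefore $\lambda_A^{-k}B^k$ converges as $k\to\infty$ to the spectral projection onto the $\lambda_A$-eigenline of $B$, which is $P v_A=P\begin{pmatrix}p_A\\1-p_A\end{pmatrix}=\begin{pmatrix}p_A\\1\end{pmatrix}$ (using $P\begin{pmatrix}x\\1-x\end{pmatrix}=\begin{pmatrix}x\\1\end{pmatrix}$). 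Consequently $\lambda_A^{-k}B^k e_2$ converges to a nonzero multiple of $\begin{pmatrix}p_A\\1\end{pmatrix}$ — nonzero because the left eigenvector $w_A$ of $B$ pairs nontrivially with $e_2$, indeed $w_A^{(2)}=2b>0$. Taking the ratio of second to first coordinates of $B^k e_2$ and passing to the limit gives $\lim_{k\to\infty}\delta_{A^k}=1/p_A$.

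Finally I would reconcile $1/p_A$ with $\rho_A$. By \eqref{paformularedone}, $p_A=\frac{\beta_A+\gamma_A}{2\alpha_A}$, while by \eqref{rhoaredone}, $\rho_A=\frac{2b}{\beta_A+\gamma_A}$; multiplying, $p_A\rho_A=\frac{b}{\alpha_A}$, which is \emph{not} $1$ in general, so a direct identification fails and I must be more careful about which coordinate ratio $\delta_{A^k}$ actually is. The cleanest route is therefore to track $\delta_{A^k}$ as an iterate of the M\"obius dynamics itself: $\delta_A=\frac{b+d}{\alpha_A}$ is, up to the coordinate convention, the image of the point "at the coordinate corresponding to $e_2$" under $T_A$; more precisely, since $T_{A^k}$ has the matrix representation $B^k$ and a M\"obius map $z\mapsto \frac{p z+q}{rz+s}$ sends $\infty\mapsto p/r$, the quantity $\frac{b+d}{\alpha_A}$ equals $1/(\text{value of }T_{A^k}\text{ "at }\infty")$ — so $\delta_{A^k}=1/T_{A^k}(\infty)$ in the relevant chart. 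Then $T_{A^k}(\infty)=T_A^k(T_A^{-1}(\infty))$-type reasoning shows the orbit converges to the attracting fixed point of $T_A$, which is $p_A$, but in the \emph{$e_2$-chart} the attracting fixed point corresponds to $\rho_A$ rather than $p_A$: indeed $\rho_A$ is characterised in Lemma~\ref{wrholem} via $w_A^{(2)}/(w_A^{(1)}-w_A^{(2)})$, exactly the slope of the repelling eigendirection $\begin{pmatrix}w_A^{(2)}\\-w_A^{(1)}\end{pmatrix}$ read in this chart, and iteration of a M\"obius map drives generic points to the chart-value of the dominant eigenvector while $\infty$ sits away from the repelling one. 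The main obstacle is precisely this bookkeeping of charts and conventions — making sure the limit one extracts from $(PAP^{-1})^k e_2$ is $\rho_A$ and not some other combination like $p_A$ or $b/\alpha_A$; once the correct chart is fixed, the convergence is automatic from the strict spectral gap of the positive matrix $A$, and the identification of the limit with $\rho_A$ is the identity \eqref{rhoaredone} combined with Lemma~\ref{wrholem}.
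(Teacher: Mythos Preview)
Your proposal contains a concrete error at the outset which derails the rest. You assert that $\delta_A$ is the ratio of the second to the first coordinate of $PAP^{-1}e_2$. But $PAP^{-1}e_2$ is the second \emph{column} of $PAP^{-1}$, namely $\begin{pmatrix}b\\ b+d\end{pmatrix}$, and the ratio you describe is $(b+d)/b$, not $\delta_A=(b+d)/\alpha_A$. What is true is that $\delta_A$ is the ratio of the $(2,2)$ entry to the $(2,1)$ entry, i.e.\ the ratio of the entries of the second \emph{row} $e_2^\top PAP^{-1}$. This row/column confusion is precisely why you end up with $1/p_A$ (a quantity tied to the right eigenvector $v_A$) and cannot reconcile it with $\rho_A$ (a quantity tied, via Lemma~\ref{wrholem}, to the left eigenvector $w_A$). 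From that point your write-up becomes hand-waving about ``charts and conventions'' and never actually completes the identification.

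Had you tracked $e_2^\top B^k$ instead of $B^k e_2$, the argument would finish immediately: $\lambda_A^{-k}e_2^\top B^k$ converges to a nonzero multiple of the left Perron eigenvector of $B=PAP^{-1}$, which is $w_AP^{-1}=(w_A^{(1)}-w_A^{(2)},\,w_A^{(2)})$, and the ratio of its second to first coordinate is exactly $\rho_A$ by Lemma~\ref{wrholem}. That is essentially the paper's proof, which avoids the conjugation altogether: it applies the Perron--Frobenius limit $\lambda_A^{-k}A^k\to v\,w_A$ directly to $A^k$, reads off $\lim_k\delta_{A^k}$ from the entries of the rank-one limit, and observes that the $v$-factors cancel to leave $w_A^{(2)}/(w_A^{(1)}-w_A^{(2)})=\rho_A$. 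Either route is short once the bookkeeping is correct; your version as written is not a proof.
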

\begin{proof}
Perron-Frobenius theory (see e.g.~\cite[Thm.~0.17]{walters}) gives
$$
\lim_{k\to\infty} \lambda_A^{-k} A^k = v \, w_A
=
\begin{pmatrix} v^{(1)} w_A^{(1)} & v^{(1)} w_A^{(2)}  \\  v^{(2)} w_A^{(1)} & v^{(2)} w_A^{(2)} \end{pmatrix}\,,
$$
where the positive dominant eigenvalue $\lambda_A>0$ and corresponding left eigenvector
$w_A$ are as in Lemma \ref{pflemma}, and $v$ is a corresponding right eigenvector
(a suitable multiple of $v_A$ from Lemma \ref{pflemma}), normalised so that
$w_A v=1$.

It follows that
\begin{equation}\label{rklimit}
\lim_{k\to\infty} \delta_{A^k}
= \frac{v^{(1)} w^{(2)}_A +v^{(2)} w_A^{(2)} }{v^{(1)} w_A^{(1)}+v^{(2)} w^{(1)}_A - v^{(1)} w^{(2)}_A -v^{(2)} w^{(2)}_A }
=\frac{w^{(2)}_A}{w^{(1)}_A -w^{(2)}_A}\,,
\end{equation}
so the formula
$\rho_A = \frac{w_A^{(2)}}{w_A^{(1)}-w_A^{(2)}}$ from Lemma \ref{wrholem} concludes the proof.
\end{proof}

\begin{cor}\label{useful}
For $A\in \na$, $x\in X$,
\begin{equation}\label{rhooccurence}
\sum_{n=1}^\infty (\log S_A'\circ T_A^n)'(x) = \frac{2}{x+\rho_A}\,.
\end{equation}
\end{cor}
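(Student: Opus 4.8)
The plan is to compute the left-hand side of \eqref{rhooccurence} by a telescoping argument, exploiting the relation between $S_A$, $T_A$ and the explicit M\"obius formulae recorded earlier. First I would recall that $S_A = T_A^{-1}$ on $X_A$, so that $\log S_A' \circ T_A^n = -\log T_A' \circ T_A^{n-1}$ for $n\ge 1$ (differentiate the identity $S_A\circ T_A = \mathrm{id}$ and use the chain rule). Hence each summand $(\log S_A'\circ T_A^n)'(x)$ can be rewritten, again by the chain rule, in terms of $(\log T_A')'$ evaluated along the forward orbit of $x$ under $T_A$; more precisely one gets a difference of the form $g(T_A^{n}(x)) - g(T_A^{n-1}(x))$ for a suitable function $g$, so the sum telescopes.

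Concretely, I would introduce $g(y) = \dfrac{1}{\alpha_A y + b+d}$ or a closely related M\"obius expression, chosen so that $(\log T_A' \circ T_A^{n-1})'(x) = c\bigl(g(T_A^{n-1}(x)) - g(T_A^{n}(x))\bigr)$ for an explicit constant $c$. The key input is the derivative formula $T_A'(x) = \det A\,(\alpha_A x + b+d)^{-2}$ from \eqref{taderiv}, which gives $(\log T_A')(x) = \mathrm{const} - 2\log(\alpha_A x + b+d)$, hence $(\log T_A')'(x) = -2\alpha_A(\alpha_A x + b+d)^{-1}$. Composing with $T_A^{n-1}$ and multiplying by $(T_A^{n-1})'(x)$, the product should collapse (because $T_A$ is itself M\"obius, so iterates and their derivatives combine nicely) into the announced telescoping difference. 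Summing over $n\ge 1$, all interior terms cancel and one is left with the $n=1$ boundary term evaluated at $x$ minus the limit of $g(T_A^n(x))$ as $n\to\infty$.

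For the limiting term I would invoke that $T_A^n(x) \to p_A$ as $n\to\infty$ for every $x\in X$ (the unique attracting fixed point; this is implicit in the earlier Perron--Frobenius discussion and the contraction properties of M\"obius maps with positive matrices), so $g(T_A^n(x)) \to g(p_A)$, an explicit constant. Then I would simplify the resulting closed-form expression $c\,(g(x) - g(p_A))$ or rather the version with $S_A$ at $n=1$, using the formulae $p_A = \frac{\beta_A+\gamma_A}{2\alpha_A}$ from \eqref{paformularedone}, $\rho_A = \frac{\gamma_A-\beta_A}{2\alpha_A}$ from \eqref{altrho}, and the identity $\gamma_A^2 - \beta_A^2 = 4b\alpha_A$ from \eqref{alphabetagamma}, to recognize it as $\frac{2}{x+\rho_A}$. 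Alternatively — and this may be the cleanest route — I would use Lemma \ref{usefullem}, which identifies $\rho_A = \lim_{k\to\infty}\delta_{A^k}$, to handle the limit term directly: the partial sums of the series on the left are themselves of the form $\frac{2}{x + \delta_{A^k}}$-type expressions coming from the M\"obius map $S_A\circ T_A^k = S_{A^{k+1}}\circ\cdots$, and passing to the limit replaces $\delta_{A^k}$ by $\rho_A$.

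The main obstacle I anticipate is bookkeeping the M\"obius compositions correctly — verifying that $(\log S_A'\circ T_A^n)'$ genuinely telescopes requires carefully tracking the chain-rule factors $(T_A^n)'(x)$ and identifying the right auxiliary function $g$; a sign error or a misplaced $\alpha_A$ would derail the final identification with $\frac{2}{x+\rho_A}$. A safe way to organize this is to first prove the finite identity $\sum_{n=1}^{N}(\log S_A'\circ T_A^n)'(x) = \frac{2}{x+\delta_{A^{N}}} - (\text{something} \to 0)$ by induction on $N$, using that $S_A\circ T_A^{N} $ relates to $S_{A^{N+1}}$, and then let $N\to\infty$ invoking Lemma \ref{usefullem}; the inductive step is then a single M\"obius computation rather than an iterated one.
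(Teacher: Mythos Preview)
Your proposal is correct, and the ``alternative, cleanest route'' you describe at the end is exactly the paper's proof: the partial sum is computed \emph{exactly} (not up to a vanishing error term) as
\[
\sum_{n=1}^{k}(\log S_A'\circ T_A^n)'(x) \;=\; -(\log T_{A^k}')'(x) \;=\; \frac{2}{x+\delta_{A^k}},
\]
the first equality coming from the chain-rule identity $\log S_A'\circ T_A^n = -\log T_A'\circ T_A^{n-1}$ you noted, and the second from \eqref{taderiv} applied to $A^k$; then Lemma~\ref{usefullem} gives the limit. Your first route via the attracting fixed point $p_A$ and an auxiliary function $g$ would also work, but it is an unnecessary detour: once you recognize that the partial sum is already $-(\log T_{A^k}')'$, the M\"obius form of $T_{A^k}$ delivers the closed expression $\frac{2}{x+\delta_{A^k}}$ in one line, with no telescoping bookkeeping or limiting argument at $p_A$ required.
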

\begin{proof}
A simple calculation using the chain rule 
yields
\begin{equation}\label{negativeformula}
\sum_{n=1}^k ( \log S_A'\circ T_A^n)'(x) =  
- (\log T_{A^k}')'(x)
= \frac{2}{x+\delta_{A^k}}
\end{equation}
for all $k\ge1$, so
letting $k\to\infty$ 
we see that the result follows from Lemma \ref{usefullem}.
\end{proof}

Recalling from (\ref{fthatis}) that  $f_\A= \frac{1}{2}\left( \log S_{A_i}'  +\log \det A_i\right)$ on $X_{A_i}$, 
the following result is an immediate consequence of Corollary \ref{useful}:

\begin{cor}\label{usefulcor}
If $\A=(A_0,A_1)\in \na^2$ then for $i\in \{0,1\}$,
\begin{equation}\label{sumformula}
\sum_{n=1}^\infty (f_\A\circ T_{A_i}^n)'(x) = \frac{1}{x+\varrho_{A_i}}
\quad\text{for }x\in X_{A_i}\,.
\end{equation}
\end{cor}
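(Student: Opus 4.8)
The plan is to deduce Corollary \ref{usefulcor} directly from Corollary \ref{useful}, using only the relationship between $f_\A$ and $\log S_{A_i}'$ recorded in \eqref{fthatis}. The key observation is that on $X_{A_i}$ one has $f_\A = \frac{1}{2}\left(\log S_{A_i}' + \log\det A_i\right)$, and the term $\log\det A_i$ is a constant, so it contributes nothing to the derivative. Hence, for $x \in X$ with $T_{A_i}^n(x) \in X_{A_i}$ (which holds for every $n \ge 1$, since $T_{A_i}^n(X) = X_{A_i}$), we have
\begin{equation*}
(f_\A \circ T_{A_i}^n)'(x) = \tfrac{1}{2}(\log S_{A_i}' \circ T_{A_i}^n)'(x).
\end{equation*}

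First I would note that the composition $f_\A \circ T_{A_i}^n$ makes sense precisely because $T_{A_i}^n$ maps $X$ into $X_{A_i}$, the domain on which $f_\A$ agrees with $\frac{1}{2}(\log S_{A_i}' + \log\det A_i)$; this is the same point that justifies applying Corollary \ref{useful}, whose left-hand side involves $\log S_{A_i}' \circ T_{A_i}^n$. Then I would sum the displayed identity over $n \ge 1$ and invoke Corollary \ref{useful} (with $A$ replaced by $A_i$), which gives $\sum_{n=1}^\infty (\log S_{A_i}' \circ T_{A_i}^n)'(x) = \frac{2}{x + \rho_{A_i}}$. Multiplying by $\frac{1}{2}$ yields the claimed formula
\begin{equation*}
\sum_{n=1}^\infty (f_\A \circ T_{A_i}^n)'(x) = \frac{1}{x + \varrho_{A_i}} \quad\text{for } x \in X_{A_i}.
\end{equation*}
(Here $\varrho$ is the paper's notation for $\rho$.)

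There is essentially no obstacle: the result is a one-line consequence of the fact that adding a constant does not change a derivative, together with Corollary \ref{useful}. The only mild subtlety worth spelling out is the domain issue — that $T_{A_i}^n(x)$ lands in $X_{A_i}$ for all $n\ge1$ and all $x\in X$, so that every term in the sum is a genuine derivative of $f_\A$ restricted to the branch $X_{A_i}$ where formula \eqref{fthatis} applies — but this is already implicit in the statement of Corollary \ref{useful} and in Remark \ref{derivativesremark}. Convergence of the series is inherited from Corollary \ref{useful}. Thus the proof is simply: differentiate the branch formula for $f_\A$, observe the constant drops out, sum, and apply Corollary \ref{useful}.
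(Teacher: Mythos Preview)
Your proposal is correct and follows exactly the paper's approach: the paper's proof is the single sentence that since $f_\A = \tfrac{1}{2}(\log S_{A_i}' + \log\det A_i)$ on $X_{A_i}$, the result is immediate from Corollary~\ref{useful}. Your write-up simply spells out this one-line argument in more detail (the constant drops under differentiation, then halve formula~\eqref{rhooccurence}); the only trivial imprecision is that $T_{A_i}^n(X)\subset X_{A_i}$ rather than $=X_{A_i}$ for $n\ge 2$, but this is exactly what you need.
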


\begin{cor}\label{posneg}
If $\A=(A_0,A_1)\in \mm$
then for all $x\in X$,
\begin{equation}\label{posnegsums}
\sum_{n=1}^\infty (f_\A\circ T_{A_0}^n)'(x) >0
\quad\text{and}\quad
\sum_{n=1}^\infty (f_\A\circ T_{A_1}^n)'(x) <0\,,
\end{equation}
and 
\begin{equation}\label{posnegxrhos}
x+\varrho_{A_0}>0
\quad\text{and}\quad
x+\varrho_{A_1}<0\,.
\end{equation}
\end{cor}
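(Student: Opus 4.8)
The plan is to deduce both assertions from results already established, handling the inequalities (\ref{posnegxrhos}) on $x+\rho_{A_i}$ first, since these are exactly what is needed to read off the signs of the sums in (\ref{posnegsums}).

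For (\ref{posnegxrhos}): since $\A=(A_0,A_1)\in\mm$, Corollary \ref{rhoposlessminusone} gives $\rho_{A_0}>0$ and $\rho_{A_1}<-1$. Because $X=[0,1]$, every $x\in X$ satisfies $x+\rho_{A_0}\ge\rho_{A_0}>0$ and $x+\rho_{A_1}\le 1+\rho_{A_1}<0$, which is precisely (\ref{posnegxrhos}). Beyond Corollary \ref{rhoposlessminusone}, the only input here is that $X$ has length exactly $1$, which is what forces $1+\rho_{A_1}<0$.

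For (\ref{posnegsums}): I would apply Corollary \ref{usefulcor}, which identifies $\sum_{n=1}^\infty(f_\A\circ T_{A_i}^n)'(x)$ with $(x+\rho_{A_i})^{-1}$; together with the signs just obtained this yields $\sum_{n=1}^\infty(f_\A\circ T_{A_0}^n)'(x)>0$ and $\sum_{n=1}^\infty(f_\A\circ T_{A_1}^n)'(x)<0$. The one point needing a word of care is that Corollary \ref{usefulcor} is stated for $x\in X_{A_i}$ while the present claim is for all $x\in X$; this is harmless, because $T_{A_i}^n(X)\subseteq X_{A_i}$ for every $n\ge1$, so the derivation of Corollary \ref{useful} through (\ref{negativeformula}) and Lemma \ref{usefullem} applies verbatim for every $x\in X$, the passage to the limit being legitimate precisely because $x+\rho_{A_i}\ne 0$ by (\ref{posnegxrhos}). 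Alternatively one can avoid the closed form altogether: Lemma \ref{posnegf}(iii) already asserts that each summand $(f_\A\circ T_{A_0}^n)'(x)$ is strictly positive and each $(f_\A\circ T_{A_1}^n)'(x)$ strictly negative for all $x\in X$ and $n\ge1$, so the sums inherit the stated signs term by term, convergence being furnished by Corollary \ref{usefulcor}.

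I do not anticipate any genuine obstacle: this is a short bookkeeping deduction from Corollaries \ref{rhoposlessminusone} and \ref{usefulcor} (equivalently, Lemma \ref{posnegf}(iii)). The only mildly delicate point is the domain mismatch between Corollary \ref{usefulcor} (on $X_{A_i}$) and the statement here (on all of $X$), which is resolved by the observation that $T_{A_i}^n$ maps $X$ into $X_{A_i}$ as soon as $n\ge1$.
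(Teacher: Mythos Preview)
Your proof is correct. The main difference from the paper is the order of deduction: the paper first establishes the sign of the sums (\ref{posnegsums}) directly from Lemma~\ref{posnegf}(iii) (termwise positivity/negativity), and then reads off (\ref{posnegxrhos}) from the closed form (\ref{sumformula}); you instead obtain (\ref{posnegxrhos}) first from the explicit bounds $\rho_{A_0}>0$ and $\rho_{A_1}<-1$ of Corollary~\ref{rhoposlessminusone}, and then deduce (\ref{posnegsums}) via the closed form. Your route is arguably more elementary for (\ref{posnegxrhos}), and indeed the paper's Remark immediately following the corollary acknowledges exactly your argument for the $A_0$ case (that $\rho_{A_0}>0$ and $x\ge 0$ suffice); you simply carry this through symmetrically for $A_1$ using $\rho_{A_1}<-1$ and $x\le 1$. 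Your observation about the domain mismatch in Corollary~\ref{usefulcor} is well taken and your resolution (that $T_{A_i}^n(X)\subseteq X_{A_i}$ for $n\ge 1$, so the derivation via Corollary~\ref{useful} goes through on all of $X$) is exactly right; note that the paper's own proof implicitly relies on the same extension when invoking (\ref{sumformula}) to conclude (\ref{posnegxrhos}) on all of $X$. Finally, your alternative via Lemma~\ref{posnegf}(iii) is precisely the paper's argument for (\ref{posnegsums}).
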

\begin{proof}
The inequalities in (\ref{posnegsums}) follow from Lemma \ref{posnegf} (iii),
while (\ref{posnegxrhos}) is an immediate consequence of
(\ref{sumformula}) and
(\ref{posnegsums}).
\end{proof}

\begin{remark}
The inequality $x+\varrho_{A_0}>0$ in (\ref{posnegxrhos}) can also be deduced from the fact that 
$\rho_{A_0}>0$ (by Corollary \ref{rhoposlessminusone}) and
$x\ge0$.
\end{remark}

\section{Sturmian  measures associated to a concave-convex matrix pair}\label{sturmasection}

For $\A\in\mm$, the induced space $X_\A$ becomes an ordered set when equipped
with the usual order on $X=[0,1]$. In particular,
by a \emph{sub-interval} of $X_\A$ we mean any subset of $X_\A$ of the form $I\cap X_\A$
where $I$ is some sub-interval of $X$. Note that a sub-interval of $X_\A$ is
 a sub-interval of $X$ if it is contained in either $X_{A_0}$ or $X_{A_1}$; otherwise it is
a union of two disjoint intervals in $X$.

\begin{defn}
Given a matrix pair $\A \in \mm$,
a closed interval $\Gamma\subset X_\A$ is called 
\emph{$\A$-Sturmian} (or simply \emph{Sturmian})
if $T_\A(\min\Gamma)=T_\A(\max\Gamma)$,
i.e.~its two endpoints $\min \Gamma$ and $\max \Gamma$ have the same image under
the induced dynamical system
$T_\A$.
\end{defn}

\begin{remark}\label{sturmianasturmian}
\item[\, (a)]
The topological conjugacy $h_\A: \Omega\to Y_\A$ (cf.~Remark \ref{conjugacymeasures}) is 
order preserving, so
if $\Gamma\subset X_\A$ is an $\A$-Sturmian interval,
then $h_\A^{-1}(\Gamma\cap Y_\A)$ is
a Sturmian interval 
as defined in Notation \ref{sturmianfullshift}
(i.e. of the form $[0\omega,1\omega]$ for some $\omega\in\Omega$).
\item[\, (b)]
For all $t>0$, an interval is $\A$-Sturmian if and only if it is $\A(t)$-Sturmian.
\end{remark}

\begin{defn}\label{henceforthca}
Let $\ss_\A$ denote the collection of all $\A$-Sturmian intervals.
Note that $\ss_\A$ is naturally parametrized by $X=[0,1]$:
for each $c\in X$ there is a unique $\Gamma\in\ss_\A$ such that 
$T_\A(\min \Gamma)= T_\A(\max\Gamma) =c$.
Henceforth we shall write $c_\A(\Gamma)$ to denote the common value
$T_\A(\min \Gamma)= T_\A(\max\Gamma)$ for an $\A$-Sturmian interval $\Gamma\in\ss_\A$, noting that
\begin{equation*}
c_\A:\ss_\A\to X
\end{equation*}
is a bijection.
As a subset of $X$, we can express $\Gamma\in\ss_\A$ as
\begin{equation}\label{disjunionsturm}
\Gamma  = [T_{A_0}(c_\A(\Gamma)), T_{A_0}(1)] \cup [T_{A_1}(0),T_{A_1}(c_\A(\Gamma))]\,.
\end{equation}
\end{defn}

\begin{remark}\label{neglectsingleton}
It is apparent from (\ref{disjunionsturm}) that, viewed as a subset of $X=[0,1]$, an $\A$-Sturmian interval  $\Gamma$ is
\emph{always} a disjoint union of two closed intervals. Note, however, that for the two extremal cases where $c_\A(\Gamma)=0$ or $1$, one of the intervals in the disjoint union is a singleton set (and the other interval is, respectively, either $X_{A_0}$ or $X_{A_1}$).
These extremal cases are particularly significant, and in the calculations of \S \ref{extsturmsection}
onwards it is convenient to neglect the singleton set, thereby identifying the extremal $\A$-Sturmian interval
with either $X_{A_0}$ or $X_{A_1}$.
\end{remark}

\begin{defn}\label{sturmianmeasdef}
We say that a $T_\A$-invariant Borel probability measure on $X_\A$ is 
\emph{$\A$-Sturmian}
 if its support is contained in 
some $\A$-Sturmian interval.
Let
$\sm_\A$ denote the collection of $\A$-Sturmian measures.
\end{defn}

\begin{remark}
\item[\, (a)]
In view of Remarks \ref{conjugacymeasures} and \ref{sturmianasturmian},
the class of $\A$-Sturmian measures on $X_\A$ is just the $h_\A^*$-image of the class of Sturmian measures
on the shift space $\Omega$,
 i.e.~$\sm_\A = h_\A^*(\mathcal{S})$.
In particular (cf.~Proposition \ref{sturmianclassical}\,(b)), $\sm_\A$ is also naturally parametrized by $X=[0,1]$:  the map $\mathcal{P}\circ (h_\A^*)^{-1}:\sm_\A\to[0,1]$ is a homeomorphism,
and for $\mu\in\sm_\A$ we refer to $\mathcal{P}\circ (h_\A^*)^{-1}(\mu) = \mu(X_{A_1})$
as its \emph{(Sturmian) parameter}.
\item[\, (b)]
For all $t>0$, a measure is $\A$-Sturmian if and only if it is $\A(t)$-Sturmian.
\end{remark}

In \S \ref{technicalsection} we shall identify cases where $\A$-Sturmian measures arise as unique maximizing measures
for $f_{\A(t)}$, $t>0$.
In particular, for certain $t$ the unique $f_{\A(t)}$-maximizing measure is a Sturmian measure of \emph{irrational} parameter, and 
such $\A(t)$ turn out to be \emph{finiteness counterexamples}:

\begin{prop}\label{irrationalcounterexampleconnection}
If $\A\in\mm$ is such that there is a unique $f_\A$-maximizing measure, and this measure is an $\A$-Sturmian measure with irrational parameter $\mathcal{P}$, then $\A$ is a finiteness counterexample
(i.e.~$r(\A) > r(A_{i_1}\cdots A_{i_n})^{1/n}$
for all $n\in\N$ and all choices $i_1,\ldots, i_n\in\{0,1\}$).
In this case 
\begin{equation}
\label{inthelimit3}
r(\A) = \lim_{n\to\infty} r(A_{\omega_1}\cdots A_{\omega_n})^{1/n} \,,
\end{equation} 
holds for the uncountably many Sturmian sequences $\omega=(\omega_n)_{n=1}^\infty$ of parameter $\mathcal{P}$.
\end{prop}
\begin{proof}
By assumption there is a  unique $f_\A$-maximizing measure $\mu$,
and this measure is an $\A$-Sturmian measure with irrational parameter $\mathcal{P}$, which in particular is not a periodic orbit measure. It follows that no $T_\A$-periodic orbit measure is $f_\A$-maximizing, so Proposition
\ref{counterexampleequivalenceprop} implies that $\A$ is a finiteness counterexample,
and that there exist uncountably many sequences $\omega\in\Omega$ such that (\ref{inthelimit3}) holds.
In fact the support of any $\A$-Sturmian measure $\mu$ is uniquely ergodic (see e.g.~\cite[Cor.~1.6]{bouschmairesse}),
so the ergodic theorem holds for the uncountably many points in the support of $\mu$ (see e.g.~\cite[Thm.~6.19]{walters}),
and therefore $\lim_{n\to\infty} \frac{1}{n}\sum_{j=0}^{n-1} f_\A(T_\A^j(p)) =  \int f_\A\, d\mu =  \log r(\A)$
for all $p$ in the support of $\mu$.
Writing $p=h_\A(\omega)$ as in the proof of Proposition \ref{counterexampleequivalenceprop},
the relation (\ref{sumproductexp}) then implies that
(\ref{inthelimit3}) holds for all 
points in the support of the Sturmian measure $m$,
i.e.~for all Sturmian sequences of parameter $\mathcal{P}$.
\end{proof}

\section{The Sturmian transfer function}\label{transfersection}

In order to 
show that the maximizing measure for $f_{\A(t)}$  is supported in some $\A$-Sturmian interval 
$\Gamma\in\ss_\A$,  our strategy will be to add a  coboundary $\phi_\Gamma - \phi_\Gamma\circ T_\A$, 
where the corresponding \emph{Sturmian transfer function} $\phi_\Gamma$ is introduced below,
so that the new function $ f_\A + \phi_\Gamma - \phi_\Gamma\circ T_\A$
takes a constant  value on all of $\Gamma$, and 
 is strictly smaller than this constant value on the complement of $\Gamma$.
 This approach is patterned on ideas of Bousch \cite{bousch} 
  in the setting of the angle-doubling map
 and degree-one trigonometric polynomials.

To proceed, it is convenient to introduce the following:

\begin{defn}
For $\A\in\mm$, to each $\A$-Sturmian interval $\Gamma$ we associate the \emph{hybrid contraction}
 $\tau_\Gamma: X\to X_\A$, defined by
\begin{equation}\label{hybrid}
\tau_\Gamma(x)=
\begin{cases}
T_{A_1}(x)&\text{if }x\in[0,c(\Gamma))\\
T_{A_0}(x)&\text{if }x\in[c(\Gamma),1]\,.
\end{cases}
\end{equation}
\end{defn}

\begin{remark}\label{taugammaremark}
The hybrid contraction $\tau_\Gamma$ satisfies $\tau_\Gamma(X)=\Gamma$,
and is piecewise Lipschitz continuous.
More precisely, its restriction to $[0,c_\A(\Gamma))$ is Lipschitz,
as is its restriction to $[c_\A(\Gamma),1]$.
\end{remark}

\begin{lemma}\label{phiexists}
Given $\A\in \mm$, and
an $\A$-Sturmian interval $\Gamma\in \ss_\A$, 
there exists a unique Lipschitz continuous function $\varphi_{\A,\Gamma}:X\to\mathbb{R}$
which simultaneously satisfies\footnote{The substantial condition is (\ref{defphiformula}),
which determines $\phi_{\A,\Gamma}$ up to an additive constant.
The extra condition
(\ref{zeroconv}) is useful in that it removes any ambiguity when discussing $\phi_{\A,\Gamma}$.}
\begin{equation}\label{defphiformula}
\varphi_{\A, \Gamma}' = \sum_{n=1}^\infty (f_\A\circ \tau_{\Gamma}^n)'
\quad\text{Lebesgue a.e.,}
\end{equation}
and 
\begin{equation}\label{zeroconv}
\varphi_{\A, \Gamma}(0)=0\,.
\end{equation}
\end{lemma}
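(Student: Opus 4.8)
The plan is to construct $\varphi_{\A,\Gamma}$ explicitly as the sum of its prescribed derivative, then verify Lipschitz continuity, uniqueness, and the normalisation. First I would observe that the series $\sum_{n=1}^\infty (f_\A\circ\tau_\Gamma^n)'$ makes sense and converges (at least Lebesgue-a.e., and in fact uniformly on suitable pieces): on each of the two subintervals $[0,c_\A(\Gamma))$ and $[c_\A(\Gamma),1]$ the first iterate $\tau_\Gamma$ is one of $T_{A_1}$ or $T_{A_0}$, and for $n\ge 2$ the point $\tau_\Gamma^{n-1}(x)$ lies in the image $\Gamma\subset X_\A$, so that from the second iterate onwards $\tau_\Gamma$ acts through the M\"obius branches $S_{A_0}^{-1}=T_{A_0}$ or $S_{A_1}^{-1}=T_{A_1}$ restricted to $X_\A$, which are uniform contractions (derivatives bounded away from $1$ by compactness and disjointness of the images, cf.\ Remark~\ref{taremarks}). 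Since $f_\A$ is Lipschitz on $X_\A$ (Remark~\ref{faremark}), the terms $(f_\A\circ\tau_\Gamma^n)'$ decay geometrically in $n$ away from the single breakpoint, giving an absolutely (and locally uniformly) convergent series; comparison with Corollary~\ref{usefulcor}, where the analogous sums $\sum_n (f_\A\circ T_{A_i}^n)'$ are computed in closed form as $1/(x+\varrho_{A_i})$, confirms the tail estimates quantitatively.

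Next I would \emph{define} $\varphi_{\A,\Gamma}(x) = \int_0^x \left(\sum_{n=1}^\infty (f_\A\circ\tau_\Gamma^n)'\right)(s)\,ds$. This automatically satisfies the normalisation (\ref{zeroconv}), and by the fundamental theorem of calculus it satisfies (\ref{defphiformula}) Lebesgue-a.e.\ provided the integrand is itself (locally) integrable, which the geometric decay bound supplies. Lipschitz continuity of $\varphi_{\A,\Gamma}$ is then equivalent to the integrand being essentially bounded on $X$; the only place this is in question is near the breakpoint $c_\A(\Gamma)$, where $\tau_\Gamma$ is discontinuous. But the $n=1$ term $(f_\A\circ\tau_\Gamma)'$ equals either $(f_\A\circ T_{A_0})'$ or $(f_\A\circ T_{A_1})'$ on each side, and each of these is bounded on all of $X$ (being a composition of the Lipschitz $f_\A$ with a smooth M\"obius map), while for $n\ge 2$ the maps $\tau_\Gamma^n$ already land inside $\Gamma$ and the terms are dominated by a convergent geometric series uniformly in $x$. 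Hence the integrand is bounded and $\varphi_{\A,\Gamma}$ is Lipschitz.

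For uniqueness: if $\psi$ is another Lipschitz function satisfying (\ref{defphiformula}) and (\ref{zeroconv}), then $\varphi_{\A,\Gamma}-\psi$ is Lipschitz with derivative $0$ Lebesgue-a.e., hence constant (a Lipschitz, equivalently absolutely continuous, function is the integral of its a.e.-derivative), and the constant is pinned to $0$ by (\ref{zeroconv}). I expect the main obstacle to be the careful handling of the breakpoint $c_\A(\Gamma)$: one must check that the discontinuity of $\tau_\Gamma$ there does not propagate into the higher iterates $\tau_\Gamma^n$ in a way that destroys boundedness of the series, and that the one-sided Lipschitz bounds from Remark~\ref{taugammaremark} glue to give genuine (two-sided) Lipschitz continuity of $\varphi_{\A,\Gamma}$ across $c_\A(\Gamma)$ — this works precisely because $\varphi_{\A,\Gamma}$ is defined as an \emph{integral}, so only boundedness (not continuity) of the integrand is needed, and the value $\varphi_{\A,\Gamma}(c_\A(\Gamma))$ is determined unambiguously. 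A secondary point worth stating cleanly is the uniform contraction estimate for the branches of $T_\A$ on $X_\A$, which underlies the geometric convergence; this follows from the disjointness of $X_{A_0}$ and $X_{A_1}$ together with the explicit M\"obius derivative formula, and may already be implicit in the ``induced Cantor set'' discussion of Remark~\ref{taremarks}(b).
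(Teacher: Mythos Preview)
Your proposal is correct and follows essentially the same route as the paper: both arguments use the piecewise-Lipschitz nature of $\tau_\Gamma$ together with the geometric decay $\|(\tau_\Gamma^n)'\|_\infty = O(\theta^n)$ to show that $\sum_{n\ge1}(f_\A\circ\tau_\Gamma^n)'$ defines an $L^\infty$ function, then take its Lipschitz antiderivative normalised to vanish at $0$, with uniqueness coming from the fact that a Lipschitz function is determined by its a.e.\ derivative up to an additive constant. Your treatment is somewhat more explicit about the breakpoint at $c_\A(\Gamma)$ and about where the uniform contraction comes from (disjointness of $X_{A_0}$ and $X_{A_1}$), but these are elaborations rather than a different strategy.
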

\begin{proof}
The function $f_\A$ 
is Lipschitz,
and $\tau_\Gamma$ is piecewise Lipschitz (cf.~Remark \ref{taugammaremark}),
so each $\tau_\Gamma^n$ is piecewise Lipschitz, so by Rademacher's Theorem is differentiable Lebesgue
almost everywhere, with $L^\infty$ derivative.
Now
 $\|(\tau_\Gamma^n)'\|_\infty = O(\theta^n)$ as $n\to\infty$ for some $\theta \in(0,1)$,
so the sum
$$\sum_{n=1}^\infty (f_\A\circ \tau_\Gamma^n)'
= \sum_{n=1}^\infty f_\A'\circ \tau_\Gamma^n .( \tau_\Gamma^n)' 
$$
 is Lebesgue almost everywhere convergent (as its $n$th term is
 $O(\theta^n)$),
and  defines an $L^\infty$ 
function with respect to Lebesgue measure on $X$.
In particular, it has a Lipschitz antiderivative $\varphi_\Gamma$,
which is the unique Lipschitz antiderivative up to an additive constant,
hence uniquely defined if it satisfies the additional condition
$\varphi_{\A, \Gamma}(0)=0$.
\end{proof}

\begin{notation}
For $\A\in\mm$, $\Gamma\in\ss_\A$,
the function $\phi_\Gamma=\phi_{\A,\Gamma}$ whose existence and uniqueness is guaranteed
by Lemma \ref{phiexists}
 will be referred to as the corresponding \emph{Sturmian transfer function}.
\end{notation}

\begin{remark}
Note that although the induced function $f_\A$ is only defined on $X_\A$, the Sturmian transfer function
$\phi_\Gamma$ is actually defined on all of $X=[0,1]$. For the most part, however, we shall only
be interested in the restriction of $\phi_\Gamma$ to $X_\A$.
More precisely, we shall be interested in certain properties of $f_\A+\phi_\Gamma$, 
or of $f_\A+ \phi_\Gamma - \phi_\Gamma\circ T_\A$,
considered as functions defined on $X_\A$, beginning with the following Corollary \ref{lipcont}.
\end{remark}

\begin{cor}\label{lipcont}
If $\A\in\mm$, and $\Gamma$ is any $\A$-Sturmian interval,
then both $f_\A+\phi_\Gamma$
and $f_\A+\phi_\Gamma - \phi_\Gamma\circ T_\A$ are Lipschitz continuous functions on $X_\A$.
\end{cor}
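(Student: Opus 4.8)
The goal is to establish Lipschitz continuity of the two functions $f_\A+\phi_\Gamma$ and $f_\A+\phi_\Gamma-\phi_\Gamma\circ T_\A$ on $X_\A$. The plan is to combine the Lipschitz continuity of the individual pieces that have already been assembled in the preceding lemmas and remarks.

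First I would recall that $f_\A$ is Lipschitz continuous on $X_\A$ by Remark \ref{faremark}, and that $\phi_\Gamma$ is Lipschitz continuous on all of $X=[0,1]$ by Lemma \ref{phiexists}. Since $X_\A\subset X$, the restriction of $\phi_\Gamma$ to $X_\A$ is Lipschitz, and hence $f_\A+\phi_\Gamma$ is Lipschitz continuous on $X_\A$ as a sum of two Lipschitz functions. This disposes of the first assertion immediately.

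For the second function, the only new ingredient is the term $\phi_\Gamma\circ T_\A$. Here I would invoke Remark \ref{taremarks}(a), which records that $T_\A:X_\A\to X$ is Lipschitz continuous, being a map whose restriction to each of the two disjoint intervals $X_{A_0}$ and $X_{A_1}$ is the M\"obius map $S_{A_0}$ or $S_{A_1}$ respectively. Composing the Lipschitz map $T_\A:X_\A\to X$ with the Lipschitz function $\phi_\Gamma:X\to\mathbb{R}$ yields a Lipschitz function $\phi_\Gamma\circ T_\A$ on $X_\A$. Therefore $f_\A+\phi_\Gamma-\phi_\Gamma\circ T_\A$ is Lipschitz continuous on $X_\A$, being a sum (with signs) of three Lipschitz functions on $X_\A$.

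There is essentially no obstacle here — this is a bookkeeping corollary assembling facts already in place, and the only point requiring a moment's care is that one uses the disjointness of $X_{A_0}$ and $X_{A_1}$ (so that a function Lipschitz on each piece is Lipschitz on the union) together with the fact that $T_\A$ maps into $X$, the full domain of definition of $\phi_\Gamma$, rather than only into $X_\A$. Both of these are already flagged in Remarks \ref{taremarks} and \ref{faremark} and in the remark preceding the corollary, so the proof is short.

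\begin{proof}
By Remark \ref{faremark}, $f_\A$ is Lipschitz continuous on $X_\A$, and by Lemma \ref{phiexists} the Sturmian transfer function $\phi_\Gamma$ is Lipschitz continuous on $X=[0,1]$, hence its restriction to $X_\A\subset X$ is Lipschitz continuous. A sum of Lipschitz continuous functions is Lipschitz continuous, so $f_\A+\phi_\Gamma$ is Lipschitz continuous on $X_\A$.

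For the second function it remains to check that $\phi_\Gamma\circ T_\A$ is Lipschitz continuous on $X_\A$. By Remark \ref{taremarks}(a), $T_\A:X_\A\to X$ is Lipschitz continuous. Since $\phi_\Gamma:X\to\mathbb{R}$ is Lipschitz continuous, the composition $\phi_\Gamma\circ T_\A:X_\A\to\mathbb{R}$ is Lipschitz continuous. Therefore $f_\A+\phi_\Gamma-\phi_\Gamma\circ T_\A$, being a sum of Lipschitz continuous functions on $X_\A$, is itself Lipschitz continuous on $X_\A$.
\end{proof}
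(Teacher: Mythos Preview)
Your proof is correct and follows essentially the same approach as the paper's own proof: both invoke Remark~\ref{faremark} for the Lipschitz continuity of $f_\A$, Lemma~\ref{phiexists} for that of $\phi_\Gamma$, and Remark~\ref{taremarks}(a) for that of $T_\A$, then conclude by the stability of Lipschitz continuity under composition and sums. Your write-up is slightly more detailed in spelling out the composition argument, but the content is identical.
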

\begin{proof}
Both $T_\A$ and $f_\A$ are Lipschitz continuous on $X_\A$, as noted in Remarks \ref{taremarks} and \ref{faremark}, and $\phi_\Gamma$ is Lipschitz continuous on $X$ as noted in Lemma \ref{phiexists}, hence Lipschitz continuous
on $X_\A$. It follows that
both $f_\A+\phi_\Gamma$
and $f_\A+\phi_\Gamma - \phi_\Gamma\circ T_\A$ are Lipschitz continuous on $X_\A$.
\end{proof}

\begin{lemma}\label{flatxai}
Suppose $\A\in\mm$, $t>0$, and $\Gamma$ is any $\A$-Sturmian interval.
The Lipschitz continuous function 
$f_{\A(t)}+\phi_\Gamma - \phi_\Gamma\circ T_\A:X_\A\to\mathbb{R}$ has the property that its restriction to
$\Gamma\cap X_{A_0}$ is a constant function, and
its restriction to
$\Gamma\cap X_{A_1}$ is a constant function.
\end{lemma}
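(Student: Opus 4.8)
The plan is to show that $f_{\A(t)}+\phi_\Gamma-\phi_\Gamma\circ T_\A$ is locally constant on each of the two pieces $\Gamma\cap X_{A_0}$ and $\Gamma\cap X_{A_1}$ by showing that its derivative vanishes Lebesgue-almost-everywhere there; since the function is Lipschitz continuous by Corollary \ref{lipcont}, vanishing derivative a.e. on an interval forces it to be constant on that interval. By Lemma \ref{simple}(ii) we have $f_{\A(t)}=f_\A+(\log t)\mathbbm 1_{X_{A_1}}$, and the added term is locally constant on each $X_{A_i}$, so it suffices to treat $f_\A$ itself; equivalently one may just carry the constant $\log t$ along harmlessly. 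So the crux is to differentiate $f_\A+\phi_\Gamma-\phi_\Gamma\circ T_\A$ on the interior of $\Gamma\cap X_{A_i}$ and see it is zero.

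The key computation uses the defining property (\ref{defphiformula}) of $\phi_\Gamma$, namely $\phi_\Gamma'=\sum_{n\ge1}(f_\A\circ\tau_\Gamma^n)'$ a.e., together with the structure of the hybrid contraction $\tau_\Gamma$ and the fact that $T_\A$ inverts the branches $T_{A_i}$. First I would fix $i\in\{0,1\}$ and a point $x$ in the interior of $\Gamma\cap X_{A_i}$; then $T_\A(x)=S_{A_i}(x)=T_{A_i}^{-1}(x)$, and by the description (\ref{disjunionsturm}) of $\Gamma$ together with the definition (\ref{hybrid}) of $\tau_\Gamma$ one checks that $\tau_\Gamma(T_\A(x))=x$ — that is, $\tau_\Gamma\circ T_\A$ is the identity on $\Gamma$ — because $T_\A(x)$ lands in the appropriate piece $[c_\A(\Gamma),1]$ or $[0,c_\A(\Gamma))$ of the domain of $\tau_\Gamma$ on which $\tau_\Gamma$ acts as $T_{A_i}$. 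Consequently $\tau_\Gamma^{n}(T_\A(x))=\tau_\Gamma^{n-1}(x)$ for all $n\ge1$. Now differentiate: by the chain rule $(\phi_\Gamma\circ T_\A)'(x)=\phi_\Gamma'(T_\A(x))\,T_\A'(x)=\Big(\sum_{n\ge1}(f_\A\circ\tau_\Gamma^n)'(T_\A(x))\Big)T_\A'(x)$. Applying the chain rule again, $(f_\A\circ\tau_\Gamma^n)'(T_\A(x))\,T_\A'(x)=(f_\A\circ\tau_\Gamma^n\circ T_\A)'(x)=(f_\A\circ\tau_\Gamma^{n-1})'(x)$, where for $n=1$ the term is $(f_\A\circ\tau_\Gamma^0\circ T_\A)'(x)=(f_\A)'(x)$ since $\tau_\Gamma\circ T_\A=\mathrm{id}$ on $\Gamma$. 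Therefore $(\phi_\Gamma\circ T_\A)'(x)=f_\A'(x)+\sum_{n\ge1}(f_\A\circ\tau_\Gamma^n)'(x)=f_\A'(x)+\phi_\Gamma'(x)$, which rearranges to $f_\A'(x)+\phi_\Gamma'(x)-(\phi_\Gamma\circ T_\A)'(x)=0$ a.e. on the interior of $\Gamma\cap X_{A_i}$. Since $f_{\A(t)}'=f_\A'$ on the interior of each $X_{A_i}$, the same holds with $f_{\A(t)}$ in place of $f_\A$.

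Having shown the derivative is zero Lebesgue-a.e. on each of the two intervals comprising $\Gamma$, the Lipschitz continuity from Corollary \ref{lipcont} (a Lipschitz function is absolutely continuous, hence the integral of its a.e.-derivative) yields that $f_{\A(t)}+\phi_\Gamma-\phi_\Gamma\circ T_\A$ is constant on $\Gamma\cap X_{A_0}$ and constant on $\Gamma\cap X_{A_1}$, as claimed. The main obstacle is the bookkeeping in the previous paragraph: one must verify carefully that $T_\A(x)$ really does land in the correct half $[c_\A(\Gamma),1]$ or $[0,c_\A(\Gamma))$ of the domain of $\tau_\Gamma$ so that $\tau_\Gamma\circ T_\A$ genuinely equals the identity on all of $\Gamma$ (including the degenerate extremal cases noted in Remark \ref{neglectsingleton}), and that the telescoping index shift $n\mapsto n-1$ in the series is legitimate; once that identity $\tau_\Gamma\circ T_\A=\mathrm{id}|_\Gamma$ is in hand, the rest is a mechanical application of the chain rule to the a.e.-convergent series guaranteed by Lemma \ref{phiexists}.
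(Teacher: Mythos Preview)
Your proposal is correct and follows essentially the same route as the paper: reduce to showing the Lebesgue-a.e.\ vanishing of the derivative on $\Gamma$ (using $f_{\A(t)}'=f_\A'$), then use the identity $\tau_\Gamma\circ T_\A=\mathrm{id}$ on $\Gamma$ to telescope the series for $(\phi_\Gamma\circ T_\A)'$ against $f_\A'+\phi_\Gamma'$. Your write-up is in fact more explicit than the paper's about verifying $\tau_\Gamma\circ T_\A=\mathrm{id}|_\Gamma$ via the description (\ref{disjunionsturm}) and the definition (\ref{hybrid}), and correctly notes that the single boundary point where this can fail is a null set.
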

\begin{proof}
By Corollary \ref{lipcont}, the function $f_{\A(t)}+\phi_\Gamma - \phi_\Gamma\circ T_\A$ is Lipschitz continuous on $X_\A$, because $\A(t)\in\mm$.
So by the fundamental theorem of calculus for Lipschitz functions (see e.g.~\cite[Thm.~7.1.15]{kk}),
the required result will follow if it can be shown that
\begin{equation}\label{derivativezero2}
(f_{\A(t)}+\varphi_\Gamma -\varphi_\Gamma \circ T_\A)'=0\quad \text{Lebesgue a.e. on $\Gamma$}\,.
\end{equation}
But $f_{\A(t)}'=f_\A'$, so
(\ref{derivativezero2}) is equivalent to proving that
\begin{equation}\label{derivativezero}
(f_\A +\varphi_\Gamma-\varphi_\Gamma\circ T_\A)'=0\quad \text{Lebesgue a.e. on $\Gamma$}\,.
\end{equation}
To establish this almost everywhere equality,
note that
$$
f_\A' + \phi_\Gamma' = 
f_\A' + \sum_{n=1}^\infty (f_\A\circ \tau_\Gamma^n)' 
=
 \sum_{n=0}^\infty (f_\A\circ \tau_\Gamma^n)' 
=
 \sum_{n=0}^\infty f_\A' \circ  \tau_\Gamma^n. (\tau_\Gamma^n)' 
$$
and 
$$
(\phi_\Gamma\circ T_\A)' =
  \sum_{n=1}^\infty f_\A' \circ \tau_\Gamma^n \circ T_\A .  (\tau_\Gamma^n)'\circ T_\A . T_\A' 
  =  \sum_{n=1}^\infty f_\A' \circ \tau_\Gamma^{n-1}.  (\tau_\Gamma^{n-1})',
$$
since $(\tau_\Gamma\circ T_\A)' = \tau_\Gamma^{n-1}$, so
indeed (\ref{derivativezero}) holds.
\end{proof}

\begin{remark}
In the generality of Lemma \ref{flatxai}, the constant values assumed by
$f_{\A(t)}+\phi_\Gamma - \phi_\Gamma\circ T_\A$ on
$\Gamma\cap X_{A_0}$ and
$\Gamma\cap X_{A_1}$ 
do not coincide.
However, we shall shortly give (see Lemma \ref{flattenlemma}) an extra condition
which does ensure
that $f_{\A(t)}+\phi_\Gamma - \phi_\Gamma\circ T_\A$ takes the \emph{same} constant value
on the whole of $\Gamma$. Indeed this possibility is a key tool in our strategy.
\end{remark}

\section{The extremal Sturmian intervals}\label{extsturmsection}

\subsection{Formulae involving extremal intervals}

As noted in Remark \ref{neglectsingleton}, an $\A$-Sturmian interval is the disjoint union of two closed intervals when viewed as a subset of $X=[0,1]$. However, the two \emph{extremal} cases yield a leftmost
$\A$-Sturmian interval equal to $X_{A_0}\cup\{T_{A_1}(0)\}$,
and a rightmost $\A$-Sturmian interval equal to $\{T_{A_0}(1)\}\cup X_{A_1}$.
The presence of singleton sets in these expressions is notationally inconvenient, and unnecessary
for our purposes, so henceforth we neglect them.

More precisely, henceforth
the leftmost $\A$-Sturmian interval is taken to be $X_{A_0}=T_{A_0}(X)$, and denoted
by $\Gamma_0$, so that $\tau_{\Gamma_0}=T_{A_0}$;
the rightmost $\A$-Sturmian interval is taken to be $X_{A_1}=T_{A_1}(X)$, and denoted
by $\Gamma_1$, so that $\tau_{\Gamma_1}=T_{A_1}$.

When the $\A$-Sturmian interval $\Gamma$ is either $\Gamma_0$ or $\Gamma_1$, there is an explicit formula for the Sturmian transfer function $\phi_\Gamma$:

\begin{lemma}\label{easyphiformulae}
Suppose $\A\in\mm$.
For $i\in\{0,1\}$, and all $x\in X$,
\begin{equation}\label{phiiformulae}
\phi_{\Gamma_i}(x) =  \log \left( \frac{x+\rho_{A_i}}{\rho_{A_i}} \right) \,.
\end{equation}
\end{lemma}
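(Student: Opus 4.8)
The plan is to verify directly that the proposed formula $\phi_{\Gamma_i}(x)=\log\left(\frac{x+\rho_{A_i}}{\rho_{A_i}}\right)$ satisfies the two defining properties of the Sturmian transfer function from Lemma \ref{phiexists}, namely the normalisation $\phi_{\Gamma_i}(0)=0$ and the derivative identity $\phi_{\Gamma_i}'=\sum_{n=1}^\infty(f_\A\circ\tau_{\Gamma_i}^n)'$ Lebesgue a.e.; uniqueness in Lemma \ref{phiexists} then forces equality. The normalisation is immediate since $\log\left(\frac{0+\rho_{A_i}}{\rho_{A_i}}\right)=\log 1=0$; note this computation is legitimate precisely because $\rho_{A_i}\neq 0$ (indeed $\rho_{A_0}>0$ and $\rho_{A_1}<-1$ by Corollary \ref{rhoposlessminusone}) and because $x+\rho_{A_i}$ has constant sign on $X=[0,1]$ (by \eqref{posnegxrhos} in Corollary \ref{posneg}, $x+\rho_{A_0}>0$ and $x+\rho_{A_1}<0$ for all $x\in X$), so the logarithm is well-defined throughout.

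The substantive step is the derivative identity. Here I would use the key observation, recorded at the start of \S\ref{extsturmsection}, that for the extremal interval $\Gamma_i$ the hybrid contraction degenerates: $\tau_{\Gamma_i}=T_{A_i}$, so that $\tau_{\Gamma_i}^n=T_{A_i}^n$ for every $n\ge 1$. Consequently
\begin{equation*}
\sum_{n=1}^\infty (f_\A\circ\tau_{\Gamma_i}^n)'(x) = \sum_{n=1}^\infty (f_\A\circ T_{A_i}^n)'(x)\,,
\end{equation*}
and this sum has already been computed: Corollary \ref{usefulcor}, equation \eqref{sumformula}, gives that it equals $\frac{1}{x+\rho_{A_i}}$ for $x\in X_{A_i}$. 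On the other hand, differentiating the proposed formula gives $\phi_{\Gamma_i}'(x)=\frac{1}{x+\rho_{A_i}}$ as well. Thus $\phi_{\Gamma_i}'$ agrees with $\sum_{n=1}^\infty(f_\A\circ\tau_{\Gamma_i}^n)'$, at least on $X_{A_i}$.

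The one point requiring slight care — and the mild obstacle in an otherwise routine verification — is that Lemma \ref{phiexists} asks for the derivative identity Lebesgue-a.e.\ on all of $X=[0,1]$, whereas \eqref{sumformula} is stated only on $X_{A_i}$. However, the computation underlying Corollary \ref{useful}/\ref{usefulcor} rests on \eqref{negativeformula}, i.e.\ on the chain-rule identity $\sum_{n=1}^k(\log S_{A_i}'\circ T_{A_i}^n)'(x) = -(\log T_{A_i^k}')'(x) = \frac{2}{x+\delta_{A_i^k}}$, together with $\rho_{A_i}=\lim_k\delta_{A_i^k}$ from Lemma \ref{usefullem}; since $T_{A_i}:X\to X$ is defined (and smooth) on all of $X$, this identity and the limit $\frac{2}{x+\rho_{A_i}}$ are valid for every $x\in X$, not merely $x\in X_{A_i}$. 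Hence $\phi_{\Gamma_i}'(x)=\sum_{n=1}^\infty(f_\A\circ T_{A_i}^n)'(x)=\frac{1}{x+\rho_{A_i}}$ holds throughout $X$, the function $x\mapsto\log\left(\frac{x+\rho_{A_i}}{\rho_{A_i}}\right)$ is Lipschitz on $X$ (its derivative is bounded there, again using the fixed sign of $x+\rho_{A_i}$), and both defining conditions of Lemma \ref{phiexists} are met. By the uniqueness asserted in that lemma, $\phi_{\Gamma_i}$ is exactly the claimed function, completing the proof.
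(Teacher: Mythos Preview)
Your proof is correct and follows essentially the same approach as the paper: both use $\tau_{\Gamma_i}=T_{A_i}$ to reduce to the sum $\sum_{n\ge 1}(f_\A\circ T_{A_i}^n)'$, invoke Corollary~\ref{usefulcor} to evaluate it as $(x+\rho_{A_i})^{-1}$, and then integrate (equivalently, verify the antiderivative) using the sign information from Corollary~\ref{posneg} together with the normalisation $\phi_{\Gamma_i}(0)=0$. Your extra paragraph extending \eqref{sumformula} from $X_{A_i}$ to all of $X$ is a worthwhile point of care that the paper's proof glosses over.
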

\begin{proof}
Now $\tau_{\Gamma_i} = T_{A_i}$, so the defining formula (\ref{defphiformula}) becomes
\begin{equation}\label{defphiiformula}
\phi_{\Gamma_i}'(x) = \sum_{n=1}^\infty (f_\A\circ T_{A_i}^n)'(x)\,,
\end{equation}
and then (\ref{sumformula}) implies that
\begin{equation*}
\phi_{\Gamma_i}'(x) = \frac{1}{x+\varrho_{A_i}}\,.
\end{equation*}
Noting that the sign of $x+\varrho_{A_i}$ is positive when $i=0$ and negative when $i=1$
(see Corollary \ref{posneg}), as well as the convention that $\phi_{\Gamma_i}(0)=0$ (see Definition \ref{phiexists}),
we deduce the required expression (\ref{phiiformulae}).
\end{proof}

\begin{defn}\label{Deltadefndefn}
Given $\A=(A_0,A_1)\in \mm$ and
$\Gamma\in\ss_\A$, 
define $\Delta_\A(\Gamma)  \in \mathbb{R}$ by
\begin{equation}\label{Deltadefn}
\Delta_\A(\Gamma)
=
\left(\phi_\Gamma(1)-\phi_\Gamma(0) \right)
-
\left( \phi_\Gamma(T_{A_0}(1))-\phi_\Gamma(T_{A_1}(0)) \right) \,,
\end{equation}
noting the equivalent expression
\begin{equation}\label{Deltadefnalt}
\Delta_\A(\Gamma)
=
\phi_\Gamma(1)
-
\left( \phi_\Gamma(T_{A_0}(1))-\phi_\Gamma(T_{A_1}(0)) \right) 
\end{equation}
as a consequence of the convention that $\phi_\Gamma(0)=0$
(see Lemma \ref{phiexists}).
\end{defn}

The values $\Delta_\A(\Gamma_i)$ play an important role, so it will be useful to record 
the following explicit formulae:

\begin{lemma}\label{deltagammailemma}
Suppose $\A\in\mm$.
For $i\in\{0,1\}$,
\begin{equation}\label{deltagammai}
\Delta_\A(\Gamma_i) = 
\log \left( 
\frac{(1+\rho_{A_i}) \left(\frac{b_1}{b_1+d_1}+\rho_{A_i}\right)}{ \rho_{A_i} \left( \frac{a_0}{a_0+c_0}+\rho_{A_i}\right)}
\right) \,.
\end{equation}
\end{lemma}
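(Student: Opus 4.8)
The plan is to compute $\Delta_\A(\Gamma_i)$ directly by substituting the explicit formula (\ref{phiiformulae}) from Lemma \ref{easyphiformulae} into the definition (\ref{Deltadefnalt}). Since $\phi_{\Gamma_i}(x) = \log\left(\frac{x+\rho_{A_i}}{\rho_{A_i}}\right)$, each term $\phi_{\Gamma_i}(y)$ appearing in $\Delta_\A(\Gamma_i)$ contributes $\log(y+\rho_{A_i}) - \log\rho_{A_i}$, so the $-\log\rho_{A_i}$ terms from $\phi_{\Gamma_i}(1)$ and $\phi_{\Gamma_i}(T_{A_1}(0))$ cancel against those from $\phi_{\Gamma_i}(0)$ (implicitly, via the convention $\phi_{\Gamma_i}(0)=0$, which was already used to pass from (\ref{Deltadefn}) to (\ref{Deltadefnalt})) and $\phi_{\Gamma_i}(T_{A_0}(1))$, leaving
\begin{equation*}
\Delta_\A(\Gamma_i) = \log(1+\rho_{A_i}) - \log\rho_{A_i} - \log\bigl(T_{A_0}(1)+\rho_{A_i}\bigr) + \log\bigl(T_{A_1}(0)+\rho_{A_i}\bigr)\,.
\end{equation*}

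Next I would substitute the explicit endpoint values of the induced images from Definition \ref{taxasa}: namely $T_{A_0}(1) = \frac{a_0}{a_0+c_0}$ (the right endpoint of $X_{A_0}$) and $T_{A_1}(0) = \frac{b_1}{b_1+d_1}$ (the left endpoint of $X_{A_1}$). Collecting the four logarithms into a single logarithm of a quotient then yields exactly the claimed expression
\begin{equation*}
\Delta_\A(\Gamma_i) = \log\left(\frac{(1+\rho_{A_i})\left(\frac{b_1}{b_1+d_1}+\rho_{A_i}\right)}{\rho_{A_i}\left(\frac{a_0}{a_0+c_0}+\rho_{A_i}\right)}\right)\,.
\end{equation*}

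I should note the minor point that the argument of the logarithm must be positive for the formula to make sense. This is not an obstacle but deserves a sentence: when $i=0$, Corollary \ref{rhoposlessminusone} gives $\rho_{A_0}>0$, so $1+\rho_{A_0}>0$ and both $\frac{b_1}{b_1+d_1}+\rho_{A_0}$ and $\frac{a_0}{a_0+c_0}+\rho_{A_0}$ are positive (being sums of nonnegative quantities with a positive one); when $i=1$, Corollary \ref{rhoposlessminusone} gives $\rho_{A_1}<-1$, and Corollary \ref{posneg} together with $T_{A_0}(1), T_{A_1}(0) \in X_{A_1}$ — or more directly the fact that $\rho_{A_1}=\varrho_{A_1}$ and $x+\varrho_{A_1}<0$ on $X_{A_1}$, combined with $1+\rho_{A_1}<0$ and $\rho_{A_1}<0$ — ensures each of the three factors $1+\rho_{A_1}$, $\frac{b_1}{b_1+d_1}+\rho_{A_1}$, $\frac{a_0}{a_0+c_0}+\rho_{A_1}$ is negative, so the overall quotient is again positive. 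Thus the computation is essentially bookkeeping, and there is no substantial obstacle: the only care needed is tracking signs so that the passage between $\log$ of individual (possibly negative) quantities and $\log$ of the (positive) combined quotient is legitimate, which is why I would phrase the intermediate steps in terms of the combined quotient rather than isolated logarithms whenever a sign could be negative.
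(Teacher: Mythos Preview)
Your proposal is correct and follows essentially the same approach as the paper: substitute the explicit formula (\ref{phiiformulae}) for $\phi_{\Gamma_i}$ into the definition (\ref{Deltadefnalt}) of $\Delta_\A(\Gamma_i)$, together with the endpoint values $T_{A_0}(1)=\frac{a_0}{a_0+c_0}$ and $T_{A_1}(0)=\frac{b_1}{b_1+d_1}$. One small slip in your positivity discussion: you write ``$T_{A_0}(1), T_{A_1}(0) \in X_{A_1}$'', but in fact $T_{A_0}(1)\in X_{A_0}$; this is harmless because Corollary \ref{posneg} gives $x+\varrho_{A_1}<0$ for \emph{all} $x\in X$, which is what you actually need.
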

\begin{proof}
This is immediate from the defining formula (\ref{Deltadefn})
(or (\ref{Deltadefnalt})) for $\Delta_\A(\Gamma_i)$, together with formula
(\ref{phiiformulae}) for $\phi_{\Gamma_i}$, and the fact that $T_{A_0}(1)=\frac{a_0}{a_0+c_0}$
and $T_{A_1}(0) = \frac{b_1}{b_1+d_1}$.
\end{proof}

\begin{cor}\label{deltasigns}
If $\A\in\mm$ then
\begin{equation*}
\label{starrr}
\Delta_\A(\Gamma_1) < 0 < \Delta_\A(\Gamma_0)\,.
\end{equation*}
\end{cor}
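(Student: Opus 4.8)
The plan is to use the explicit formula \eqref{deltagammai} for $\Delta_\A(\Gamma_i)$ and determine the sign of each logarithm by analysing the sign of the numerator and denominator of its argument. Since $\log$ is negative precisely on $(0,1)$ and positive on $(1,\infty)$, the task reduces to comparing the two products
$$
N_i := (1+\rho_{A_i})\left(\tfrac{b_1}{b_1+d_1}+\rho_{A_i}\right)
\quad\text{and}\quad
D_i := \rho_{A_i}\left(\tfrac{a_0}{a_0+c_0}+\rho_{A_i}\right)\,,
$$
for $i=0$ and $i=1$, keeping in mind that both must first be shown to be strictly positive (so that the argument of the logarithm is a genuine positive real) and then compared.

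For $i=0$: by Corollary \ref{rhoposlessminusone} we have $\rho_{A_0}>0$, so every factor above is strictly positive, and $1+\rho_{A_0}>1>\tfrac{a_0}{a_0+c_0}+\rho_{A_0}$? — no; rather I compare $N_0$ with $D_0$ directly. Note $1+\rho_{A_0}>\rho_{A_0}>0$, and $\tfrac{b_1}{b_1+d_1}+\rho_{A_0}$ versus $\tfrac{a_0}{a_0+c_0}+\rho_{A_0}$ is governed by $\tfrac{b_1}{b_1+d_1}$ versus $\tfrac{a_0}{a_0+c_0}$; but $T_{A_1}(0)=\tfrac{b_1}{b_1+d_1}>\tfrac{a_0}{a_0+c_0}=T_{A_0}(1)$ is exactly the content of \eqref{definequal1} (equivalently the disjointness-and-ordering of the induced images, Proposition \ref{concavconvexprop}). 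Hence $N_0>D_0>0$, so the argument of the logarithm in \eqref{deltagammai} exceeds $1$, giving $\Delta_\A(\Gamma_0)>0$.

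For $i=1$: here $\rho_{A_1}<-1$ by Corollary \ref{rhoposlessminusone}, so the factor $1+\rho_{A_1}<0$ and the factor $\rho_{A_1}<0$. To get $D_1>0$ I need $\tfrac{a_0}{a_0+c_0}+\rho_{A_1}<0$, i.e. $\rho_{A_1}<-\tfrac{a_0}{a_0+c_0}$; since $\rho_{A_1}<-1\le-\tfrac{a_0}{a_0+c_0}$ this is immediate (as $0<\tfrac{a_0}{a_0+c_0}<1$), and similarly $\tfrac{b_1}{b_1+d_1}+\rho_{A_1}<0$, so $N_1>0$ as a product of two negatives. Now compare: $|1+\rho_{A_1}|<|\rho_{A_1}|$ while $\bigl|\tfrac{b_1}{b_1+d_1}+\rho_{A_1}\bigr|<\bigl|\tfrac{a_0}{a_0+c_0}+\rho_{A_1}\bigr|$, the latter again because $\tfrac{b_1}{b_1+d_1}>\tfrac{a_0}{a_0+c_0}$ and both quantities inside are negative so adding a larger positive number brings them closer to zero. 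Multiplying these two strict inequalities between positive reals gives $N_1<D_1$, so the argument of the logarithm in \eqref{deltagammai} lies in $(0,1)$, whence $\Delta_\A(\Gamma_1)<0$.

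The only mild subtlety — and the step I would be most careful about — is bookkeeping the signs in the $i=1$ case, where three of the four factors are negative: one must check not merely that $N_1$ and $D_1$ are positive but that the product comparison respects the direction of inequality after taking absolute values. Everything else is a direct appeal to \eqref{definequal1}/Proposition \ref{concavconvexprop} together with Corollary \ref{rhoposlessminusone}, so I would present this as a short computation rather than an elaborate argument.
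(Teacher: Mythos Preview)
Your argument is correct and essentially identical to the paper's: both start from formula \eqref{deltagammai}, establish that all four factors $\rho_{A_i}$, $1+\rho_{A_i}$, $\tfrac{a_0}{a_0+c_0}+\rho_{A_i}$, $\tfrac{b_1}{b_1+d_1}+\rho_{A_i}$ share a common sign (positive for $i=0$, negative for $i=1$), and then compare the fraction to $1$ using $1+\rho_{A_i}>\rho_{A_i}$ together with \eqref{definequal1}. The only cosmetic differences are that the paper cites Corollary~\ref{posneg} rather than Corollary~\ref{rhoposlessminusone} for the signs, and that your closing remark miscounts the negative factors in the $i=1$ case as three rather than four (your actual computation handles all four correctly).
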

\begin{proof}
The four terms
$
\rho_{A_i}
$,
$
1+\rho_{A_i}
$,
$
\frac{a_0}{a_0+c_0}+\rho_{A_i}
$,
$
\frac{b_1}{b_1+d_1}+\rho_{A_i}
$
in (\ref{deltagammai}) are all positive if $i=0$, and all negative if $i=1$, by 
the inequalities (\ref{posnegxrhos})
in Corollary \ref{posneg}.
Now $\A\in\mm$ implies that (\ref{definequal1}) holds, so
$
\frac{b_1}{b_1+d_1}+\rho_{A_i}
>
\frac{a_0}{a_0+c_0}+\rho_{A_i}
$,
and clearly $1+\rho_{A_i}>\rho_{A_i}$
Consequently
$$
\frac{(1+\rho_{A_i}) \left(\frac{b_1}{b_1+d_1}+\rho_{A_i}\right)}{ \rho_{A_i} \left( \frac{a_0}{a_0+c_0}+\rho_{A_i}\right)}
$$
is strictly greater than $1$ if $i=0$, and strictly smaller than $1$ if $i=1$.
The result then follows from Lemma \ref{deltagammailemma}.
\end{proof}

\subsection{Adaptations for other matrix pairs}\label{adaptations}
As mentioned in \S \ref{relatsubsection}, the methods of this paper can be adapted 
so as to give alternative proofs 
of certain results
(analogues of Theorem \ref{maintheorem})
 mentioned in \S \ref{generalsection},
namely establishing that a full Sturmian family is generated
by the matrix pair (\ref{standardpair}), and for matrix pairs corresponding to sub-cases of (\ref{bmfamily}) and (\ref{kozyakinpairs})
which lie in the boundary of $\nn$.\footnote{Note that all
of the matrix pairs in (\ref{bmfamily}),   (\ref{kozyakinpairs}), (\ref{standardpair}) have the property that
$A_0$ is projectively concave and $A_1$ is projectively convex.}
In this subsection we indicate the modifications necessary to handle these cases.

Firstly, the induced space $X_\A$ may be the whole of $X=[0,1]$ rather than a disjoint union of two closed intervals:
this occurs if $a_0/c_0=b_1/d_1$ (i.e.~when (\ref{definequal1}) becomes an equality), which is the case for the pair
(\ref{standardpair}), and for (\ref{kozyakinpairs}) if $bc=1$.

Secondly, in each of the cases (\ref{bmfamily}), (\ref{kozyakinpairs}) and (\ref{standardpair}), the 
induced maps $T_{A_0}$ and $T_{A_1}$ have fixed points
at
0 and 1 respectively, so that the dynamical system $T_\A$ also fixes these points.
For (\ref{standardpair}), both 0 and 1 are indifferent fixed points, i.e.~$T_{A_0}'(0)=1=T_{A_1}'(1)$.
For   (\ref{bmfamily}) and (\ref{kozyakinpairs}) these fixed points are unstable for the induced maps $T_{A_0}$ and $T_{A_1}$, i.e.~$T_{A_0}'(0)>1$ and $T_{A_1}'(1)>1$, but both of these maps also have stable fixed points in the interior of $X=[0,1]$.
Consequently for (\ref{standardpair}) the dynamical system $T_\A:X\to X$ has indifferent fixed points at 0 and 1,
and no other fixed points, while for (\ref{bmfamily}) and (\ref{kozyakinpairs}) the dynamical system $T_\A$ has stable fixed points at 0 and 1, and two further unstable fixed points in the interior of $X$.

The potentially problematic stable fixed points for $T_\A$ can in fact be avoided by omitting to consider 
the two extremal $\A$-Sturmian intervals: this ensures the
asymptotic $\|(\tau_\Gamma^n)'\|_\infty = O(\theta^n)$ as $n\to\infty$,
 $\theta \in(0,1)$,  and the existence of Sturmian transfer functions is proved as in Lemma \ref{phiexists}.
 In the case where $T_\A$ has indifferent fixed points, it is even possible to consider 
 extremal $\A$-Sturmian intervals, as the series defining the Sturmian transfer function is nonetheless convergent.
 The existence of Sturmian transfer functions then allows the
 remainder of the method of proof to proceed essentially as for matrix pairs in $\nn$,
 ultimately establishing analogues of the main result Theorem \ref{maintheorem}.

\section{Associating $\A$-Sturmian intervals to parameter values}\label{particularsection}

\begin{notation}
For a Sturmian interval $\Gamma\in\ss_\A$,
let $s_\Gamma\in\sm_\A$
denote the $\A$-Sturmian measure supported by $\Gamma$, i.e.~$s_\Gamma$ is
the unique $T_\A$-invariant probability measure whose support is contained in $\Gamma$.
\end{notation}

\begin{lemma}\label{flattenlemma}
Suppose $\A\in \mm$.
If $t\in\mathbb{R}^+$
and
$\Gamma\in \ss_\A$  are such that
\begin{equation}\label{keyflatteningequation}
f_{\A(t)}(T_{A_0}(1)) - f_{\A(t)}(T_{A_1}(0))
= \Delta_\A( \Gamma)\,,
\end{equation}
then the Lipschitz continuous function
$f_{\A(t)}+\phi_{\Gamma} - \phi_{\Gamma}\circ T_\A$ is
  equal to the constant value
$\int f_{\A(t)}\, ds_\Gamma$ when restricted to $\Gamma$.
\end{lemma}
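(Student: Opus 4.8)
The plan is to show that the Lipschitz continuous function $g := f_{\A(t)}+\phi_\Gamma - \phi_\Gamma\circ T_\A$, restricted to $\Gamma$, takes a single constant value, and that this value equals $\int f_{\A(t)}\,ds_\Gamma$. The first assertion is the nontrivial part; the second will then follow by an integration argument against the invariant measure $s_\Gamma$.

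First I would recall from Lemma \ref{flatxai} that, for \emph{any} $\A$-Sturmian interval $\Gamma$, the function $g$ is already constant on $\Gamma\cap X_{A_0}$ and (separately) constant on $\Gamma\cap X_{A_1}$; call these constants $\kappa_0$ and $\kappa_1$. The point of hypothesis (\ref{keyflatteningequation}) is precisely to force $\kappa_0=\kappa_1$. To see this, I would evaluate $g$ at the two ``inner'' endpoints of the disjoint intervals comprising $\Gamma$: by (\ref{disjunionsturm}), the point $T_{A_0}(1)$ lies in $\Gamma\cap X_{A_0}$ and the point $T_{A_1}(0)$ lies in $\Gamma\cap X_{A_1}$, so $\kappa_0 = g(T_{A_0}(1))$ and $\kappa_1 = g(T_{A_1}(0))$. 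Now
\begin{align*}
\kappa_0-\kappa_1
&= \bigl(f_{\A(t)}(T_{A_0}(1)) - f_{\A(t)}(T_{A_1}(0))\bigr)
 + \bigl(\phi_\Gamma(T_{A_0}(1)) - \phi_\Gamma(T_{A_1}(0))\bigr) \\
&\quad - \bigl(\phi_\Gamma(T_\A(T_{A_0}(1))) - \phi_\Gamma(T_\A(T_{A_1}(0)))\bigr).
\end{align*}
But $T_{A_0}(1)$ and $T_{A_1}(0)$ are the two maximal, resp.\ minimal, endpoints of $X_{A_0}$, resp.\ $X_{A_1}$, so $T_\A(T_{A_0}(1)) = S_{A_0}(T_{A_0}(1)) = 1$ and $T_\A(T_{A_1}(0)) = S_{A_1}(T_{A_1}(0)) = 0$; hence the last bracket equals $\phi_\Gamma(1)-\phi_\Gamma(0)$. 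Substituting and comparing with the definition (\ref{Deltadefn}) of $\Delta_\A(\Gamma)$, I get $\kappa_0-\kappa_1 = \bigl(f_{\A(t)}(T_{A_0}(1)) - f_{\A(t)}(T_{A_1}(0))\bigr) - \Delta_\A(\Gamma)$, which is $0$ precisely by (\ref{keyflatteningequation}). Therefore $g$ equals a single constant $\kappa := \kappa_0 = \kappa_1$ on all of $\Gamma$.

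It remains to identify $\kappa$ as $\int f_{\A(t)}\,ds_\Gamma$. Since $s_\Gamma$ is $T_\A$-invariant and supported in $\Gamma$, integrating $g = f_{\A(t)} + \phi_\Gamma - \phi_\Gamma\circ T_\A$ over $s_\Gamma$ gives $\int g\, ds_\Gamma = \int f_{\A(t)}\, ds_\Gamma + \int \phi_\Gamma\, ds_\Gamma - \int \phi_\Gamma\circ T_\A\, ds_\Gamma = \int f_{\A(t)}\, ds_\Gamma$, the coboundary integrating to zero by invariance (and $\phi_\Gamma$ being bounded, as it is Lipschitz on the compact interval $X$). On the other hand $g \equiv \kappa$ on $\Gamma \supseteq \operatorname{supp} s_\Gamma$, so $\int g\, ds_\Gamma = \kappa$. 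Hence $\kappa = \int f_{\A(t)}\, ds_\Gamma$, as claimed.

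The main obstacle is the bookkeeping in the first step: one has to be careful that the endpoint evaluations of the piecewise-defined function $g$ pick up the correct branch of $T_\A$, and that the two extremal Sturmian intervals $\Gamma_0, \Gamma_1$ (where one of the two component intervals degenerates to a singleton, per Remark \ref{neglectsingleton}) are handled consistently with the stated convention $\tau_{\Gamma_i} = T_{A_i}$; but in those cases $\Gamma$ is literally $X_{A_0}$ or $X_{A_1}$, so $g$ is constant on $\Gamma$ outright by Lemma \ref{flatxai}, and the computation of $\kappa$ goes through verbatim. Everything else is a direct substitution of the explicit definitions of $\Delta_\A(\Gamma)$, $\phi_\Gamma$, and $T_\A$ together with the invariance of $s_\Gamma$.
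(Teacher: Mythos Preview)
Your proof is correct and follows essentially the same approach as the paper: invoke Lemma~\ref{flatxai} to get constancy on each piece, then compare the values at the inner endpoints $T_{A_0}(1)$ and $T_{A_1}(0)$, using $T_\A(T_{A_0}(1))=1$ and $T_\A(T_{A_1}(0))=0$ to reduce the difference $\kappa_0-\kappa_1$ to the hypothesis~(\ref{keyflatteningequation}). Your additional paragraph identifying the constant as $\int f_{\A(t)}\,ds_\Gamma$ via the coboundary argument is in fact more explicit than the paper's own proof, which asserts this value in the statement but does not spell out the integration step.
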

\begin{proof}
By Lemma \ref{flatxai} we know that
$f_{\A(t)}+\varphi_\Gamma-\varphi_\Gamma\circ T_\A$ 
is constant when restricted to
$\Gamma\cap X_{A_0}$, and also constant
when restricted to $\Gamma\cap X_{A_1}$. 
To prove that these constant values are the \emph{same},
it suffices to show that $f_{\A(t)}+\phi_\Gamma - \phi_\Gamma\circ T_\A$ 
takes the same value at the point $T_{A_0}(1)\in X_{A_0}$ as
it does at the point $T_{A_1}(0)\in X_{A_1}$.
But the equality
$$
\left(f_{\A(t)}+\phi_\Gamma - \phi_\Gamma\circ T_\A\right)(T_{A_0}(1))
=
\left(f_{\A(t)}+\phi_\Gamma - \phi_\Gamma\circ T_\A\right)(T_{A_1}(0))
$$
holds if and only if
$$
f_{\A(t)}(T_{A_0}(1)) - f_{\A(t)}(T_{A_1}(0)) = \phi_\Gamma(1)-\phi_\Gamma(0) - \left(\phi_\Gamma(T_{A_0}(1)) - \phi_\Gamma(T_{A_1}(0))\right)\,,
$$
in other words
$f_{\A(t)}(T_{A_0}(1)) - f_{\A(t)}(T_{A_1}(0)) = \Delta_\A( \Gamma)$, which is precisely
the  hypothesis
(\ref{keyflatteningequation}).
\end{proof}

\begin{cor}\label{flattencor}
Given $\A\in \mm$, if $t\in\mathbb{R}^+$
and
$\Gamma\in \ss_\A$  are such that
\begin{equation}\label{keyflatteningequation1}
\log \left( \left(\frac{a_0+c_0}{b_1+d_1}\right) t^{-1} \right)
= \Delta_\A(\Gamma) \,,
\end{equation}
then the Lipschitz continuous function
$f_{\A(t)}+\phi_{\Gamma} - \phi_{\Gamma}\circ T_\A$ is
  equal to the constant value
$\int f_{\A(t)}\, ds_\Gamma$ on $\Gamma$.
\end{cor}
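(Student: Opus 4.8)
The plan is to show that the hypothesis (\ref{keyflatteningequation1}) is nothing more than an explicit rewriting of the hypothesis (\ref{keyflatteningequation}) of Lemma \ref{flattenlemma}, after which the corollary follows immediately by invoking that lemma.

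First I would apply Lemma \ref{simple}(iii), which gives
\[
f_{\A(t)}(T_{A_0}(1)) - f_{\A(t)}(T_{A_1}(0)) = f_\A(T_{A_0}(1)) - f_\A(T_{A_1}(0)) - \log t\,,
\]
so that it suffices to identify $f_\A(T_{A_0}(1)) - f_\A(T_{A_1}(0))$ with $\log\!\left(\tfrac{a_0+c_0}{b_1+d_1}\right)$. To do this I would evaluate $f_\A$ at the two relevant endpoints using the explicit formula (\ref{explicitfalt}), namely $f_\A(x) = \log\!\left(\tfrac{\det A_i}{-\alpha_{A_i}(x+\sigma_{A_i})}\right)$ for $x\in X_{A_i}$. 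Since $T_{A_0}(1) = \tfrac{a_0}{a_0+c_0}$ is the right endpoint of $X_{A_0}$ and $T_{A_1}(0) = \tfrac{b_1}{b_1+d_1}$ is the left endpoint of $X_{A_1}$, the determinant identities (\ref{detac}) and (\ref{detbd}) of Lemma \ref{detalphasigma} read $\det A_0 = -\alpha_{A_0}(a_0+c_0)\bigl(T_{A_0}(1)+\sigma_{A_0}\bigr)$ and $\det A_1 = -\alpha_{A_1}(b_1+d_1)\bigl(T_{A_1}(0)+\sigma_{A_1}\bigr)$. Substituting these into (\ref{explicitfalt}) collapses the ratios, giving $f_\A(T_{A_0}(1)) = \log(a_0+c_0)$ and $f_\A(T_{A_1}(0)) = \log(b_1+d_1)$, hence $f_\A(T_{A_0}(1)) - f_\A(T_{A_1}(0)) = \log\!\left(\tfrac{a_0+c_0}{b_1+d_1}\right)$.

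Combining the two displays then yields $f_{\A(t)}(T_{A_0}(1)) - f_{\A(t)}(T_{A_1}(0)) = \log\!\left(\bigl(\tfrac{a_0+c_0}{b_1+d_1}\bigr)t^{-1}\right)$, so condition (\ref{keyflatteningequation1}) holds exactly when condition (\ref{keyflatteningequation}) does; Lemma \ref{flattenlemma} then delivers the conclusion, including the identification of the constant value with $\int f_{\A(t)}\,ds_\Gamma$.

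There is no genuine obstacle here: the entire content is the endpoint evaluation of $f_\A$, which is a one-line substitution. The only point requiring a moment's care is bookkeeping --- keeping straight that we use the \emph{right} endpoint $a_0/(a_0+c_0)$ of $X_{A_0}$ together with identity (\ref{detac}) (the one involving $a+c$), and the \emph{left} endpoint $b_1/(b_1+d_1)$ of $X_{A_1}$ together with identity (\ref{detbd}) (the one involving $b+d$), so that the appropriate determinant identity is paired with the appropriate endpoint in each case.
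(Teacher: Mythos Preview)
Your proposal is correct and follows essentially the same route as the paper: reduce via Lemma~\ref{simple}(iii) to the evaluation $f_\A(T_{A_0}(1))=\log(a_0+c_0)$ and $f_\A(T_{A_1}(0))=\log(b_1+d_1)$, then invoke Lemma~\ref{flattenlemma}. The only cosmetic difference is that you explicitly cite the determinant identities (\ref{detac}) and (\ref{detbd}) to justify the endpoint evaluations, whereas the paper simply substitutes into (\ref{explicitfalt}) and records the outcome.
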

\begin{proof}
By Lemma \ref{flattenlemma} it suffices to show that
$$f_{\A(t)}(T_{A_0}(1)) - f_{\A(t)}(T_{A_1}(0))
=
\log \left( \left(\frac{a_0+c_0}{b_1+d_1}\right) t^{-1} \right)\,,$$
and by
Lemma \ref{simple}(iii) this is equivalent to showing that
$$f_{\A}(T_{A_0}(1)) - f_{\A}(T_{A_1}(0))
=
\log \left( \frac{a_0+c_0}{b_1+d_1} \right)\,.$$
Substituting $T_{A_0}(1)=\frac{a_0}{a_0+c_0}$ and $T_{A_1}(0)=\frac{b_1}{b_1+d_1}$
into, respectively, the formulae (\ref{explicitfalt}) for $f_\A$ on $X_{A_0}$ and $X_{A_1}$ yields
\begin{equation}\label{fata01}
f_\A(T_{A_0}(1)) = \log(a_0+c_0)
\end{equation}
and
\begin{equation}\label{fata10}
 f_\A(T_{A_1}(0)) = \log(b_1+d_1)\,,
 \end{equation}
 so the result follows.
\end{proof}

In view of equation
(\ref{keyflatteningequation1})
we make the following definition:

\begin{defn}
For $\A\in\mm$ and $i\in\{0,1\}$, define $t_i=t_i(\A)$ by
\begin{equation}\label{tiadef}
t_i = t_i(\A) =  \left(\frac{a_0+c_0}{b_1+d_1}\right) e^{-\Delta_\A(\Gamma_i)}\,,
\end{equation}
so that
\begin{equation*}
\log \left( \left(\frac{a_0+c_0}{b_1+d_1}\right) t_i^{-1} \right)
= \Delta_\A(\Gamma_i)\,.
\end{equation*}
\end{defn}

\begin{remark}
Since $e^{-\Delta_\A(\Gamma_0)} < 1 < e^{-\Delta_\A(\Gamma_1)}$
by (\ref{starrr}), it follows that 
\begin{equation*}
t_0(\A) <t_1(\A)\,.
\end{equation*}
\end{remark}

\begin{lemma}
For $\A\in\mm$ and 
$i\in\{0,1\}$,
\begin{equation}\label{initialtiaexpressions}
t_i(\A)
=
\frac{\rho_{A_i} \left( a_0 + \rho_{A_i}(a_0+c_0) \right)}{(1+\rho_{A_i})\left(b_1+\rho_{A_i}(b_1+d_1)\right)} \,.
\end{equation}
\end{lemma}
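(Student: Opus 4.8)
The plan is to simply combine the definition of $t_i(\A)$ with the explicit formula for $\Delta_\A(\Gamma_i)$ already derived. First I would recall from the defining equation (\ref{tiadef}) that
\begin{equation*}
t_i(\A) = \left(\frac{a_0+c_0}{b_1+d_1}\right) e^{-\Delta_\A(\Gamma_i)} \,,
\end{equation*}
and from Lemma \ref{deltagammailemma} (namely formula (\ref{deltagammai})) that
\begin{equation*}
e^{-\Delta_\A(\Gamma_i)} = \frac{\rho_{A_i}\left(\frac{a_0}{a_0+c_0}+\rho_{A_i}\right)}{(1+\rho_{A_i})\left(\frac{b_1}{b_1+d_1}+\rho_{A_i}\right)} \,.
\end{equation*}

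Next I would substitute this expression into the formula for $t_i(\A)$ and clear denominators: multiplying the numerator factor $\frac{a_0}{a_0+c_0}+\rho_{A_i}$ by $a_0+c_0$ gives $a_0+\rho_{A_i}(a_0+c_0)$, and multiplying the denominator factor $\frac{b_1}{b_1+d_1}+\rho_{A_i}$ by $b_1+d_1$ gives $b_1+\rho_{A_i}(b_1+d_1)$; the leading fraction $\frac{a_0+c_0}{b_1+d_1}$ precisely supplies these two factors, yielding
\begin{equation*}
t_i(\A) = \frac{\rho_{A_i}\left(a_0+\rho_{A_i}(a_0+c_0)\right)}{(1+\rho_{A_i})\left(b_1+\rho_{A_i}(b_1+d_1)\right)} \,,
\end{equation*}
which is exactly (\ref{initialtiaexpressions}).

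This argument is purely a routine algebraic manipulation, so there is no substantive obstacle; the only point requiring a word of care is that all the factors appearing in denominators ($1+\rho_{A_i}$, $b_1+d_1$, and $\frac{b_1}{b_1+d_1}+\rho_{A_i}$) are nonzero. Their nonvanishing follows from Corollary \ref{rhoposlessminusone} together with the sign information in Corollary \ref{posneg} (for $i=0$ the relevant quantities are positive, for $i=1$ they are negative, by (\ref{posnegxrhos})), so the division is legitimate and the identity holds for both $i\in\{0,1\}$.
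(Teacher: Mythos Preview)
Your proof is correct and follows essentially the same route as the paper: invert (\ref{deltagammai}) to obtain $e^{-\Delta_\A(\Gamma_i)}$, substitute into the definition (\ref{tiadef}), and clear the inner fractions. Your added remark about nonvanishing of the denominators is a sensible extra care that the paper leaves implicit.
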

\begin{proof}
From (\ref{deltagammai}) we see that for $i\in\{0,1\}$,
\begin{equation*}
e^{-\Delta_\A(\Gamma_i)} = 
\frac{ \rho_{A_i} \left( \frac{a_0}{a_0+c_0}+\rho_{A_i}\right)}{(1+\rho_{A_i}) \left(\frac{b_1}{b_1+d_1}+\rho_{A_i}\right)}\,,
\end{equation*}
so that (\ref{tiadef}) gives
\begin{equation}\label{firsttexpression}
t_i(\A) = 
\left(\frac{a_0+c_0}{b_1+d_1}\right) e^{-\Delta_\A(\Gamma_i)}
=
\frac{\rho_{A_i} \left( a_0 + \rho_{A_i}(a_0+c_0) \right)}{(1+\rho_{A_i})\left(b_1+\rho_{A_i}(b_1+d_1)\right)} \,,
\end{equation}
which is the required expression (\ref{initialtiaexpressions}).
\end{proof}

A consequence is the following property:

\begin{cor}
For $\A\in \mm$,  $t\in\mathbb{R}^+$, and $i\in\{0,1\}$,
\begin{equation}\label{tti}
t_i( \A(t)) = \frac{t_i(\A)}{t}\,.
\end{equation}
\end{cor}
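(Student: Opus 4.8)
The plan is to unwind the definition (\ref{tiadef}) of $t_i$ and to track how each ingredient transforms under the replacement $\A\mapsto\A(t)=(A_0,tA_1)$. Writing the second matrix of $\A(t)$ as $tA_1$, its entries in the relevant positions are $tb_1$ and $td_1$, while $A_0$ is left unchanged; hence the prefactor appearing in (\ref{tiadef}) satisfies $\frac{a_0+c_0}{tb_1+td_1}=\tfrac1t\cdot\frac{a_0+c_0}{b_1+d_1}$. It therefore remains only to establish that the exponential factor is unchanged, i.e.\ that
\[
\Delta_{\A(t)}(\Gamma_i)=\Delta_\A(\Gamma_i)\qquad\text{for }i\in\{0,1\}.
\]

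First I would note that this equality even makes sense: by Remark \ref{invariantposmultiple} one has $T_{tA_1}=T_{A_1}$, so the induced dynamical system $T_{\A(t)}=T_\A$, and consequently the extremal $\A$-Sturmian intervals $\Gamma_0,\Gamma_1$ are common to $\A$ and $\A(t)$ (cf.\ Remark \ref{sturmianasturmian}(b)). Then I would appeal to the closed formula of Lemma \ref{deltagammailemma}, which expresses $\Delta_\A(\Gamma_i)$ solely in terms of $\rho_{A_i}$ and the normalised entries $\frac{a_0}{a_0+c_0}$ and $\frac{b_1}{b_1+d_1}$. The two normalised entries are manifestly invariant under scaling $A_1$ by $t$, and $\rho$ is scale-invariant as well: substituting $(ta,tb,tc,td)$ for $(a,b,c,d)$ in the defining formula for $\rho_A$ multiplies numerator and denominator alike by $t$, whence $\rho_{tA_1}=\rho_{A_1}$ (and of course $\rho_{A_0}$ is untouched). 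Thus every term in the formula of Lemma \ref{deltagammailemma} is unaffected, giving $\Delta_{\A(t)}(\Gamma_i)=\Delta_\A(\Gamma_i)$; combining this with the transformation of the prefactor yields $t_i(\A(t))=\tfrac1t\cdot\frac{a_0+c_0}{b_1+d_1}\,e^{-\Delta_\A(\Gamma_i)}=t_i(\A)/t$.

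An equivalent and slightly shorter route would be to bypass $\Delta_\A(\Gamma_i)$ altogether and apply the same scaling observation directly to the explicit expression (\ref{initialtiaexpressions}) for $t_i(\A)$: there the only dependence on $A_1$ sits in the factors $b_1+\rho_{A_1}(b_1+d_1)$ in the denominator, each of which picks up precisely one factor of $t$ under $A_1\mapsto tA_1$ (using once again that $\rho_{tA_1}=\rho_{A_1}$), so the whole quotient is multiplied by $1/t$. I do not anticipate any genuine obstacle here; the only point demanding a moment's care is the scale-invariance of $\rho_A$, which is immediate from its defining formula.
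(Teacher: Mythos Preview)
Your proposal is correct, and the second route you sketch---applying the scale-invariance $\rho_{tA_1}=\rho_{A_1}$ directly to the explicit formula (\ref{initialtiaexpressions}) to see that the denominator acquires a single factor of $t$---is precisely the paper's proof. Your first route via $\Delta_\A(\Gamma_i)$ is a harmless detour that reaches the same conclusion for the same reason; one minor slip is that the denominator factor is $b_1+\rho_{A_i}(b_1+d_1)$ (subscript $i$, not $1$), but this does not affect the argument.
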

\begin{proof}
This follows easily from
(\ref{initialtiaexpressions}),
and the easily verified fact (used only in the proof of the $i=1$ case) that 
$\rho_{tA_1}=\rho_{A_1}$.
Specifically, for $i\in\{0,1\}$,
\begin{equation*}
t_i(\A(t))
=
\frac{\rho_{A_i} \left( a_0 + \rho_{A_i}(a_0+c_0) \right)}{(1+\rho_{A_i})\left(tb_1+\rho_{A_i}(tb_1+td_1)\right)} 
=\frac{1}{t}
\frac{\rho_{A_i} \left( a_0 + \rho_{A_i}(a_0+c_0) \right)}{(1+\rho_{A_i})\left(b_1+\rho_{A_i}(b_1+d_1)\right)} 
= \frac{t_i(\A)}{t}\,.
\end{equation*}
\end{proof}

\begin{lemma}
For $\A\in\mm$, the quantities $t_0(\A)$ and $t_1(\A)$ admit the following alternative expressions:
\begin{equation}\label{tiaexpressions}
t_0(\A)
=
\frac{\det A_0}{ \left( a_0 - b_0 ( 1+\rho_{A_0}^{-1})\right) \left( d_1 +b_1(1+\rho_{A_0}^{-1})\right)}
\end{equation}
and
\begin{equation}\label{tiaexpressionsi1}
t_1(\A)
=
\frac{ \left(a_0+c_0(1+\rho_{A_1}^{-1})^{-1}\right) \left(a_1-b_1(1+\rho_{A_1}^{-1})\right)}{\det A_1}\,.
\end{equation}
\end{lemma}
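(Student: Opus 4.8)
The plan is to derive both identities by elementary manipulation of the formula (\ref{initialtiaexpressions}) for $t_i(\A)$, the only non-formal ingredient being the quadratic $q_{A_i}(\rho_{A_i})=0$ from Lemma \ref{rhoquadratic}. I would first record, for an arbitrary $A=\begin{pmatrix} a&b\\c&d\end{pmatrix}\in M_2^+(\mathbb{R}^+)$, the identity
\[
a-b\bigl(1+\rho_A^{-1}\bigr) = b+d-\alpha_A\rho_A\,,
\]
obtained by dividing $\alpha_A\rho_A^2+\beta_A\rho_A-b=0$ through by $\rho_A$ and substituting $\beta_A=a-d-2b$ (equivalently, this is the determinantal form of $q_A(\rho_A)=0$ furnished by Lemma \ref{qpolylemma}); I would also record the two elementary determinant relations $(a+c)(a-b)-a\alpha_A=\det A$ and $(b+d)(a-b)-b\alpha_A=\det A$.

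For $t_0(\A)$ I would set $\rho=\rho_{A_0}$ and apply the purely algebraic identity $b_1+\rho(b_1+d_1)=\rho\bigl(d_1+b_1(1+\rho^{-1})\bigr)$ to rewrite (\ref{initialtiaexpressions}) as
\[
t_0(\A)=\frac{a_0+\rho(a_0+c_0)}{(1+\rho)\bigl(d_1+b_1(1+\rho^{-1})\bigr)}\,.
\]
Since $\rho>0$ by Corollary \ref{rhoposlessminusone}, the factor $d_1+b_1(1+\rho^{-1})$ is positive, so the asserted formula (\ref{tiaexpressions}) is equivalent, after clearing this common factor, to $\bigl(a_0+\rho(a_0+c_0)\bigr)\bigl(a_0-b_0(1+\rho^{-1})\bigr)=(1+\rho)\det A_0$, hence, by the first recorded identity applied to $A_0$, to
\[
\bigl(a_0+\rho(a_0+c_0)\bigr)\bigl(b_0+d_0-\alpha_{A_0}\rho\bigr)=(1+\rho)\det A_0\,.
\]
To verify this I would expand the left-hand side, use $\alpha_{A_0}\rho^2=b_0-\beta_{A_0}\rho$ to eliminate the $\rho^2$ term, and simplify the resulting affine expression in $\rho$ by means of the two determinant relations; it collapses to exactly $(1+\rho)\det A_0$. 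This incidentally yields $b_0+d_0-\alpha_{A_0}\rho>0$, which legitimises the division and shows the right-hand side of (\ref{tiaexpressions}) is well defined.

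For $t_1(\A)$ the argument runs in parallel with $\rho=\rho_{A_1}$ (now $\rho<-1$ by Corollary \ref{rhoposlessminusone}, so the factors to be cancelled are again nonzero): using $a_0+\rho(a_0+c_0)=(1+\rho)\bigl(a_0+c_0(1+\rho^{-1})^{-1}\bigr)$ one rewrites (\ref{initialtiaexpressions}) as $t_1(\A)=\rho\bigl(a_0+c_0(1+\rho^{-1})^{-1}\bigr)\big/\bigl(b_1+\rho(b_1+d_1)\bigr)$, and after clearing the factor $a_0+c_0(1+\rho^{-1})^{-1}$ and invoking $a_1-b_1(1+\rho^{-1})=b_1+d_1-\alpha_{A_1}\rho$, the claim (\ref{tiaexpressionsi1}) reduces to
\[
\rho\,\det A_1=\bigl(b_1+\rho(b_1+d_1)\bigr)\bigl(b_1+d_1-\alpha_{A_1}\rho\bigr)\,,
\]
which is checked exactly as before, using $\alpha_{A_1}\rho^2=b_1-\beta_{A_1}\rho$ and the determinant relations.

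I do not anticipate a substantive obstacle: each step is a bounded symbolic computation. The only point demanding care is bookkeeping — in (\ref{tiaexpressions}) the quantity $\rho_{A_0}$ is paired both with entries of $A_0$ and with entries of $A_1$, and only the $A_0$-factor may be rewritten through the quadratic, so one must avoid prematurely symmetrising the two matrices — together with keeping track of the signs of the denominators being cancelled so that the asserted formulae are seen to be meaningful.
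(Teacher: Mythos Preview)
Your proposal is correct and follows essentially the same route as the paper: both arguments start from (\ref{initialtiaexpressions}), factor the denominator (respectively numerator) to obtain the intermediate expression $\frac{a_0+\rho(a_0+c_0)}{(1+\rho)\bigl(d_1+b_1(1+\rho^{-1})\bigr)}$ (respectively $\frac{\rho\bigl(a_0+c_0(1+\rho^{-1})^{-1}\bigr)}{b_1+\rho(b_1+d_1)}$), and then reduce the remaining identity to the quadratic relation $q_{A_i}(\rho_{A_i})=0$ of Lemma~\ref{rhoquadratic}. The only difference is presentational: the paper simply asserts that ``clearing fractions'' in the resulting equation yields a multiple of $q_{A_i}(\rho_{A_i})$, whereas you first invoke the quadratic to rewrite $a_i-b_i(1+\rho^{-1})$ as $b_i+d_i-\alpha_{A_i}\rho$ and then expand explicitly, using it a second time to eliminate the $\rho^2$ term --- more bookkeeping, but the same underlying identity.
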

\begin{proof}
Since (\ref{firsttexpression}) implies
\begin{equation}\label{intermediatetexpression}
t_0(\A) = 
\frac{a_0 + \rho_{A_0}(a_0+c_0) }{(1+\rho_{A_0})\left(d_1+b_1(1+\rho_{A_0}^{-1})\right)} \,,
\end{equation}
we see that $t_0(\A)$ is equal to (\ref{tiaexpressions}) if and only if
\begin{equation}\label{ifftexpression}
\frac{a_0 + \rho_{A_0}(a_0+c_0) }{1+\rho_{A_0}}
=
\frac{a_0d_0-b_0c_0}{a_0-b_0(1+\rho_{A_0}^{-1})}\,.
\end{equation}
Clearing fractions in (\ref{ifftexpression}) reveals it to be equivalent to the equation
$$
q_{A_0}(\rho_{A_0}) = \alpha_{A_0} \rho_{A_0}^2+\beta_{A_0}\rho_{A_0}-b_0 =0\,,
$$
which is true by Lemma \ref{rhoquadratic}.

Since (\ref{firsttexpression}) implies
\begin{equation}\label{intermediatetexpression1}
t_1(\A) = 
\frac{\rho_{A_1}\left( a_0+c_0(1+\rho_{A_1}^{-1})^{-1}\right)}{b_1+\rho_{A_1}(b_1+d_1)} \,,
\end{equation}
we see that $t_1(\A)$ is equal to (\ref{tiaexpressionsi1}) if and only if
\begin{equation}\label{ifftexpression1}
\frac{\rho_{A_1}}{b_1+\rho_{A_1}(b_1+d_1)}
=
\frac{a_1-b_1(1+\rho_{A_1}^{-1})}{\det A_1}\,.
\end{equation}
Clearing fractions in (\ref{ifftexpression1}) reveals it to be equivalent to the equation
$$
q_{A_1}(\rho_{A_1}) = \alpha_{A_1} \rho_{A_1}^2+\beta_{A_1}\rho_{A_1}-b_1 =0\,,
$$
which is true by Lemma \ref{rhoquadratic}.
\end{proof}

\begin{notation}
For $\A\in\mm$, let $\t_\A$ denote the open interval $\left(t_0(\A), t_1(\A)\right)$.
\end{notation}

\begin{prop}\label{analogue} 
Let $\A\in\mm$.
For each $t \in \t_\A$ there exists 
an $\A$-Sturmian interval $\Gamma_\A(t)\in \ss_\A$ such that
$f_{\A(t)}+\phi_{\Gamma_t} - \phi_{\Gamma_t}\circ T_\A$ is 
  equal to the constant value
$\int f_{\A(t)}\, ds_{\Gamma_\A(t)}$ on $\Gamma_\A(t)$.
\end{prop}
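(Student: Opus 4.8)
The plan is to establish \propref{analogue} by combining \corref{flattencor} with a continuity/surjectivity argument for the map $\Gamma\mapsto\Delta_\A(\Gamma)$ on the space of $\A$-Sturmian intervals. By \corref{flattencor}, it suffices to produce, for each $t\in\t_\A$, an $\A$-Sturmian interval $\Gamma$ with
\begin{equation*}
\log\left(\left(\frac{a_0+c_0}{b_1+d_1}\right)t^{-1}\right)=\Delta_\A(\Gamma)\,.
\end{equation*}
First I would note that as $t$ ranges over the open interval $\t_\A=(t_0(\A),t_1(\A))$, the left-hand side ranges over the open interval with endpoints $\log\left(\left(\frac{a_0+c_0}{b_1+d_1}\right)t_1(\A)^{-1}\right)=\Delta_\A(\Gamma_1)$ and $\log\left(\left(\frac{a_0+c_0}{b_1+d_1}\right)t_0(\A)^{-1}\right)=\Delta_\A(\Gamma_0)$, i.e.\ over the open interval $(\Delta_\A(\Gamma_1),\Delta_\A(\Gamma_0))$ (recall $t_0(\A)<t_1(\A)$ and $\Delta_\A(\Gamma_1)<0<\Delta_\A(\Gamma_0)$ by \corref{deltasigns}). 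So the whole proposition reduces to showing that the function $\Gamma\mapsto\Delta_\A(\Gamma)$ takes every value in the open interval $(\Delta_\A(\Gamma_1),\Delta_\A(\Gamma_0))$ as $\Gamma$ ranges over $\ss_\A$.

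The natural way to do this is via the intermediate value theorem, using the parametrization $c_\A:\ss_\A\to X=[0,1]$ from \defref{henceforthca}, which is a bijection with $c_\A(\Gamma_0)=0$ and $c_\A(\Gamma_1)=1$ (after the identification of \remref{neglectsingleton}). Thus I would consider the composite function $c\mapsto\Delta_\A(\Gamma)$ where $c=c_\A(\Gamma)$, and show it is continuous on $[0,1]$; then since it takes the value $\Delta_\A(\Gamma_0)$ at $c=0$ and $\Delta_\A(\Gamma_1)$ at $c=1$, the intermediate value theorem gives, for each target value in between, an interval $\Gamma_\A(t)$ with the required $\Delta$-value. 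Continuity should follow from the formula \eqref{Deltadefnalt} for $\Delta_\A(\Gamma)$ together with the observation that the Sturmian transfer function $\phi_\Gamma$, and in particular its values $\phi_\Gamma(1)$, $\phi_\Gamma(T_{A_0}(1))$, $\phi_\Gamma(T_{A_1}(0))$, depend continuously on the parameter $c=c_\A(\Gamma)$. This in turn is because $\phi_\Gamma'=\sum_{n\ge1}(f_\A\circ\tau_\Gamma^n)'$ (see \eqref{defphiformula}), the series converges uniformly thanks to the geometric bound $\|(\tau_\Gamma^n)'\|_\infty=O(\theta^n)$ with $\theta$ bounded away from $1$ uniformly in $\Gamma$, and the hybrid contractions $\tau_\Gamma$ vary continuously (away from their single discontinuity point $c$, whose location varies continuously); integrating and using the normalisation $\phi_\Gamma(0)=0$ then yields continuous dependence of $\phi_\Gamma$ on $c$, uniformly on $X$.

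The main obstacle I anticipate is the careful treatment of the continuity of $c\mapsto\phi_\Gamma$ near the extremal parameters $c=0,1$ and across the internal discontinuity of $\tau_\Gamma$: one must check that the asymptotic $\|(\tau_\Gamma^n)'\|_\infty=O(\theta^n)$ holds with a rate $\theta\in(0,1)$ that can be chosen uniformly over all $\Gamma\in\ss_\A$ (including the extremal ones $\Gamma_0,\Gamma_1$, where $\tau_\Gamma$ becomes a single M\"obius contraction $T_{A_i}$ with contraction rate governed by its stable fixed point $p_{A_i}$ in the interior of $X$), so that the tail of the defining series is uniformly small and continuity is not lost in the limit. Once this uniform contraction estimate is in hand, the remaining steps are routine: the partial sums $\sum_{n=1}^N(f_\A\circ\tau_\Gamma^n)'$ depend continuously on $c$ (being finite sums of compositions of maps varying continuously in $c$), they converge uniformly in $c$ to $\phi_\Gamma'$, hence $\phi_\Gamma$ depends continuously on $c$ in the sup norm, and therefore so does $\Delta_\A(\Gamma)$ via \eqref{Deltadefnalt}. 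Applying the intermediate value theorem and then \corref{flattencor} completes the proof.
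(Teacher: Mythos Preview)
Your proposal is correct and follows essentially the same route as the paper: reduce via \corref{flattencor} to hitting a prescribed value of $\Delta_\A(\Gamma)$, establish continuity of $\Gamma\mapsto\Delta_\A(\Gamma)$ via continuous dependence of $\phi_\Gamma$ on the parameter, and apply the intermediate value theorem between the extremal values $\Delta_\A(\Gamma_0)$ and $\Delta_\A(\Gamma_1)$. The one place where the paper's execution differs slightly is the continuity step: rather than arguing with partial sums of $\phi_\Gamma'$ (which, due to the moving jump of $\tau_\Gamma$, vary continuously in $L^1$ but not in $L^\infty$), the paper writes $\phi_\Gamma(z)=\int f_\A'\, H_z(\Gamma)$ with $H_z(\Gamma)=\sum_{n\ge1}\mathbbm{1}_{\tau_\Gamma^n[0,z]}$ and shows directly that $H_z\in C([\Gamma_0,\Gamma_1],L^1)$, which packages the discontinuity issue cleanly.
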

\begin{proof} 
First we show that $\Delta_\A: \Gamma \mapsto \Delta_\A(\Gamma)$ is continuous.
The formula (\ref{Deltadefnalt}) defines
\begin{align*}
\Delta_\A(\Gamma)
& =
\phi_\Gamma(1)
-
\phi_\Gamma(T_{A_0}(1)) + \phi_\Gamma(T_{A_1}(0)) \cr
& =
\phi_\Gamma(1)
-
\phi_\Gamma\left(\frac{a_0}{a_0+c_0}\right) + \phi_\Gamma\left(\frac{b_1}{b_1+d_1}\right) \,,
\end{align*}
so the continuity of $ \Delta_\A$ will follow from the fact
that $\Gamma\mapsto \phi_\Gamma(z)$ is continuous for each $z\in X$.
To see this, first note that Definition \ref{phiexists} gives
\begin{equation*}
\phi_\Gamma(z)
= \phi_\Gamma(z)-\phi_\Gamma(0) 
 =\int_0^z \phi_\Gamma' 
 = \int_0^z \sum_{n=1}^\infty (f_\A \circ \tau_\Gamma^n)' \,,
\end{equation*}
and re-writing this integral as
\begin{equation*}
  \sum_{n=1}^\infty \int_0^z (f_\A \circ \tau_\Gamma^n)' 
 =  \sum_{n=1}^\infty \int_{\tau^n_\Gamma[0,z]} f_\A' 
 = \sum_{n=1}^\infty \int  \mathbbm{1}_{\tau^n_\Gamma[0,z]}  f_\A'
 =\int f_\A' \sum_{n=1}^\infty \mathbbm{1}_{\tau^n_\Gamma[0,z]}
\end{equation*}
gives
\begin{equation}\label{phigammaexpression}
\phi_\Gamma(z) = \int f_\A' H_z(\Gamma) \,,
\end{equation}
where 
\begin{equation*}
H_z(\Gamma) = \sum_{n=1}^\infty \mathbbm{1}_{\tau^n_\Gamma[0,z]} \,.
\end{equation*}
Now each map $H_{z,n}:\Gamma\mapsto \mathbbm{1}_{\tau^n_\Gamma[0,z]}$ 
clearly belongs to $C([\Gamma_0,\Gamma_1],L^1)$, the space of
continuous functions from $[\Gamma_0,\Gamma_1]$ to $L^1=L^1(dx)$,
and $\sum_{n=1}^\infty H_{z,n}$
is convergent in
$C([\Gamma_0,\Gamma_1],L^1)$, so $H_z(\cdot) \in C([\Gamma_0,\Gamma_1],L^1)$.
It then follows from (\ref{phigammaexpression}) that 
$\Gamma\mapsto \phi_\Gamma(z)$ is continuous, as required.

Now 
note that
the function $G_\A$ defined by
\begin{equation}\label{gadef}
G_\A(t) = \log \left( \left(\frac{a_0+c_0}{b_1+d_1}\right) t^{-1} \right)
\end{equation}
is strictly decreasing, since $a_0,c_0,b_1,d_1>0$,
so if $t \in \t_\A = (t_0(\A),t_1(\A))$ then 
\begin{equation}\label{ginside}
G_\A(t)\in \left(G_\A(t_1(\A)),G_\A(t_0(\A))\right) = (\Delta_\A(\Gamma_1),\Delta_\A(\Gamma_0))\,.
\end{equation}
Now $\Delta_\A$ is continuous, so applying
the intermediate value theorem to this function (defined on the interval $[\Gamma_0,\Gamma_1]$) we see that in view of (\ref{ginside}), there exists an $\A$-Sturmian interval, which we denote by $\Gamma_\A(t)$, such that 
$\Gamma_\A(t)\in (\Gamma_0,\Gamma_1)$
and 
\begin{equation}\label{deltatdefeq}
\Delta_\A(\Gamma_\A(t))=G_\A(t)\,.
\end{equation}
In other words,
\begin{equation*}
\log \left( \left(\frac{a_0+c_0}{b_1+d_1}\right) t^{-1} \right)
= \Delta_\A(\Gamma_\A(t)) \,,
\end{equation*}
so that Corollary \ref{flattencor} implies that 
$f_{\A(t)}+\phi_{\Gamma_t} - \phi_{\Gamma_t}\circ T_\A
= \int f_{\A(t)}\, ds_{\Gamma_\A(t)}$ on $\Gamma_\A(t)$, as required.
\end{proof}

\section{The case when one matrix dominates}\label{dominatessec}

It will be useful to record the value of the induced function
$f_\A$ at the two fixed points of $T_\A$:

\begin{lemma}\label{fapai}
For $\A\in\mm$ and $i\in\{0,1\}$,
\begin{equation*}
f_\A(p_{A_i})
=
\log\left( \frac{\det A_i}{a_i - b_i ( 1+\rho_{A_i}^{-1})}
\right)
=
\log\left( \frac{\det A_i}{a_i-b_i -\frac{1}{2}(\beta_{A_i}+\gamma_{A_i})}\right)
\,.
\end{equation*}
\end{lemma}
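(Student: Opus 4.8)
The plan is to evaluate $f_\A$ at the fixed point $p_{A_i}$ by means of the explicit formula (\ref{explicitfalt}) and then simplify the resulting expression in the two ways claimed.

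First I would observe that $p_{A_i}$ lies in the induced image $X_{A_i}$: since $p_{A_i}\in X=[0,1]$ and $p_{A_i}=T_{A_i}(p_{A_i})$, we have $p_{A_i}\in T_{A_i}(X)=X_{A_i}$. Hence the relevant branch of $f_\A$ is the one on $X_{A_i}$, and (\ref{explicitfalt}) gives
$$
f_\A(p_{A_i}) = \log\left( \frac{\det A_i}{-\alpha_{A_i}(p_{A_i}+\sigma_{A_i})} \right)\,,
$$
where the argument of the logarithm is strictly positive by Corollary \ref{alwaysposcor}.

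Next I would simplify the denominator $-\alpha_{A_i}(p_{A_i}+\sigma_{A_i})$. From $p_{A_i}=\frac{\beta_{A_i}+\gamma_{A_i}}{2\alpha_{A_i}}$ (see (\ref{paformularedone})) we get $\alpha_{A_i}p_{A_i}=\frac{1}{2}(\beta_{A_i}+\gamma_{A_i})$, and from $\sigma_{A_i}=\frac{b_i-a_i}{\alpha_{A_i}}$ (see (\ref{sigmaaredone})) we get $\alpha_{A_i}\sigma_{A_i}=b_i-a_i$; therefore
$$
-\alpha_{A_i}(p_{A_i}+\sigma_{A_i}) = a_i - b_i - \frac{1}{2}(\beta_{A_i}+\gamma_{A_i})\,,
$$
which is exactly the second claimed expression for $f_\A(p_{A_i})$. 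Finally, since $\rho_{A_i}=\frac{2b_i}{\beta_{A_i}+\gamma_{A_i}}$ by (\ref{rhoaredone}), one has $\frac{1}{2}(\beta_{A_i}+\gamma_{A_i})=b_i\rho_{A_i}^{-1}$, so the denominator also equals $a_i-b_i(1+\rho_{A_i}^{-1})$, giving the first claimed expression.

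The whole argument is a short substitution, so I do not expect any genuine obstacle; the only point needing (minimal) care is confirming that $p_{A_i}\in X_{A_i}$, so that the correct branch of $f_\A$ is used, together with the positivity of the argument of the logarithm, both already available from the earlier development. As a consistency check one could instead argue that $S_{A_i}=T_{A_i}^{-1}$ and Lemma \ref{derivfixedpt} give $S_{A_i}'(p_{A_i})=\lambda_{A_i}^{2}/\det A_i$, whence $f_\A(p_{A_i})=\frac{1}{2}\bigl(\log S_{A_i}'(p_{A_i})+\log\det A_i\bigr)=\log\lambda_{A_i}$ by (\ref{fthatis}); the displayed formulae then reduce to the identity $\lambda_{A_i}\bigl(a_i-b_i(1+\rho_{A_i}^{-1})\bigr)=\det A_i$, which follows from $\lambda_{A_i}=\frac{1}{2}(a_i+d_i+\gamma_{A_i})$ together with $\gamma_{A_i}^{2}=(a_i-d_i)^{2}+4b_ic_i$.
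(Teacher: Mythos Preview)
Your proof is correct and follows essentially the same approach as the paper, which simply records it as a straightforward computation using (\ref{rhoaredone}), (\ref{paformularedone}), and (\ref{explicitfalt}). Your extra consistency check via $f_\A(p_{A_i})=\log\lambda_{A_i}$ is a nice addition but not needed.
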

\begin{proof}
Straightforward computation using (\ref{rhoaredone}),
(\ref{paformularedone}), 
and (\ref{explicitfalt}).
\end{proof}

We first consider a sufficient condition for 
the projectively concave matrix
$A_0$ to be the dominant matrix of the pair $\A=(A_0,A_1)$:

\begin{theorem}\label{t0larger1theorem}
If $\A\in\mm$ is such that
\begin{equation}\label{t0larger1}
t_0(\A) \ge 1\,,
\end{equation}
then the Dirac measure at the fixed point $p_{A_0}$ is the unique $f_\A$-maximizing measure;
in particular, the joint spectral radius of $\A$ is equal to the spectral radius of $A_0$.
\end{theorem}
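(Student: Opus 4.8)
The plan is to show, via the coboundary method of \S\ref{particularsection} applied to the \emph{leftmost} extremal Sturmian interval $\Gamma_0 = X_{A_0}$, that the Dirac measure $\delta_{p_{A_0}}$ solves the ergodic optimization problem of Theorem~\ref{maxtheorem}. Put $g = f_\A + \phi_{\Gamma_0} - \phi_{\Gamma_0}\circ T_\A$, which is Lipschitz on $X_\A$ by Corollary~\ref{lipcont}; since $g - f_\A$ is a coboundary, $\int g\, d\mu = \int f_\A\, d\mu$ for all $\mu\in\m_\A$. Thus it suffices to prove that $g\le f_\A(p_{A_0})$ on $X_\A$, with $g(x) = f_\A(p_{A_0})$ for $x\in Y_\A$ only if $x = p_{A_0}$. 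Granting this, $\log r(\A) = \max_{\mu\in\m_\A}\int g\, d\mu \le f_\A(p_{A_0}) = \int g\, d\delta_{p_{A_0}}$ identifies $\delta_{p_{A_0}}$ as $f_\A$-maximizing, and simplifying Lemma~\ref{fapai} via $a_0 - b_0 - \tfrac12(\beta_{A_0}+\gamma_{A_0}) = \tfrac12(a_0+d_0-\gamma_{A_0})$, the smaller eigenvalue of $A_0$, gives $f_\A(p_{A_0}) = \log\lambda_{A_0} = \log r(A_0)$.

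First I would examine $g$ on each component of $X_\A$. On $X_{A_0}$: since $\Gamma_0 = X_{A_0}$ and $\tau_{\Gamma_0} = T_{A_0}$, Lemma~\ref{flatxai} (with $t=1$) makes $g$ constant there, and evaluating at the $T_\A$-fixed point $p_{A_0}$ gives $g\equiv f_\A(p_{A_0})$ on $X_{A_0}$. On $X_{A_1}$, where $T_\A = S_{A_1}$: inserting the explicit formula $\phi_{\Gamma_0}(x) = \log\bigl((x+\rho_{A_0})/\rho_{A_0}\bigr)$ of Lemma~\ref{easyphiformulae} and the formula (\ref{explicitfalt}) for $f_\A$, and simplifying the resulting M\"obius expressions (using $\alpha_{A_1}\sigma_{A_1} = b_1 - a_1$), I would obtain $g(x) = \log(\det A_1) + \log M_0(x)$ on $X_{A_1}$, where $M_0$ is exactly the M\"obius map $x\mapsto \dfrac{x+\rho_{A_0}}{(b_1+d_1-\alpha_{A_1}\rho_{A_0})x + (a_1-b_1)\rho_{A_0} - b_1}$ of Lemma~\ref{techderivlemma}. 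Since $M_0$ has strictly negative derivative (Lemma~\ref{techderivlemma}) and is positive on $X_{A_1}$ (as $g$ is real-valued there), $g$ is strictly decreasing on $X_{A_1}$, so $\sup_{X_{A_1}} g = g(T_{A_1}(0))$ with $T_{A_1}(0) = b_1/(b_1+d_1)$.

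The step I expect to be the main obstacle is the inequality $g(T_{A_1}(0)) \le f_\A(p_{A_0})$, which is exactly where the hypothesis (\ref{t0larger1}) must be used; it is a delicate but elementary identity-chase. Evaluating the formula for $g$ at $T_{A_1}(0)$ and simplifying (the denominator of $M_0$ at this point reduces to $\rho_{A_0}\det A_1/(b_1+d_1)$ via $a_1 d_1 - b_1 c_1 = \det A_1$) gives $g(T_{A_1}(0)) = \log\bigl(d_1 + b_1(1+\rho_{A_0}^{-1})\bigr)$, while Lemma~\ref{fapai} gives $f_\A(p_{A_0}) = \log\bigl(\det A_0/(a_0 - b_0(1+\rho_{A_0}^{-1}))\bigr)$. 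Hence $g(T_{A_1}(0)) \le f_\A(p_{A_0})$ amounts to $\det A_0 \ge \bigl(a_0 - b_0(1+\rho_{A_0}^{-1})\bigr)\bigl(d_1 + b_1(1+\rho_{A_0}^{-1})\bigr)$, which by the expression (\ref{tiaexpressions}) for $t_0(\A)$ is precisely (\ref{t0larger1}). Together with the previous paragraph this establishes $g\le f_\A(p_{A_0})$ on all of $X_\A$.

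Finally, for uniqueness: any $f_\A$-maximizing $\mu\in\m_\A$ satisfies $\int g\, d\mu = f_\A(p_{A_0}) = \sup_{X_\A} g$, so $g = f_\A(p_{A_0})$ $\mu$-almost everywhere; as $g$ is strictly decreasing on $X_{A_1}$ and $T_{A_1}(0)\notin Y_\A$ (because $T_\A(T_{A_1}(0)) = 0\notin X_\A$), we have $g < f_\A(p_{A_0})$ on $Y_\A\cap X_{A_1}$, so $\mu$ is supported in $X_{A_0}\cap Y_\A$. But on $X_{A_0}$ the map $T_\A$ is $S_{A_0} = T_{A_0}^{-1}$, for which $p_{A_0}$ is a repelling fixed point ($T_{A_0}'(p_{A_0}) = \det A_0/\lambda_{A_0}^2 < 1$ by Lemma~\ref{derivfixedpt}), so a $T_\A$-invariant measure supported in $X_{A_0}$ must equal $\delta_{p_{A_0}}$ (a support point $x\ne p_{A_0}$ would have forward orbit $S_{A_0}^n(x)$ eventually leaving $X_{A_0}$, contradicting invariance). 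Thus $\delta_{p_{A_0}}$ is the unique $f_\A$-maximizing measure and $r(\A) = r(A_0)$.
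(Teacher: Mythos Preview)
Your proof is correct and follows essentially the same route as the paper: both choose $\phi=\phi_{\Gamma_0}$, use Lemma~\ref{flatxai} to flatten $g=f_\A+\phi-\phi\circ T_\A$ on $X_{A_0}$, identify $g$ on $X_{A_1}$ with $\log\det A_1$ plus the logarithm of the M\"obius map of Lemma~\ref{techderivlemma} to obtain strict decrease there, and then recognise the endpoint inequality $g(T_{A_1}(0))\le f_\A(p_{A_0})$ as exactly the hypothesis $t_0(\A)\ge 1$ via the formula~(\ref{tiaexpressions}). Your treatment of uniqueness is in fact more careful than the paper's, which simply asserts that $\delta_{p_{A_0}}$ is ``clearly'' the unique maximizer; you correctly handle the boundary case $t_0(\A)=1$ by noting that $T_{A_1}(0)\notin Y_\A$ (since $T_\A(T_{A_1}(0))=0\notin X_\A$), and you spell out why $\delta_{p_{A_0}}$ is the only $T_\A$-invariant measure supported in $X_{A_0}$.
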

\begin{proof}
Choosing 
$\phi(x) = \phi_{\Gamma_0}(x) = \log \left(\frac{x+\rho_{A_0}}{\rho_{A_0}}\right)$
ensures,
by Lemma \ref{flatxai}, 
 that
$f_\A+\phi-\phi\circ T_\A$ is constant when restricted to $X_{A_0} = \Gamma_0$,
and the constant value assumed by this function is clearly $f_\A(p_{A_0})$.
The result will follow if we can show that $f_\A+\phi-\phi\circ T_\A$ is strictly decreasing
on $X_{A_1}$,
and that the value $(f_\A+\phi-\phi\circ T_\A)(\frac{b_1}{b_1+d_1})$
at the left endpoint of $X_{A_1}$ is no greater than the constant value $f_\A(p_{A_0})$.
This is because
the Dirac measure $\delta_{p_{A_0}}$ will then clearly be the unique maximizing measure for
$f_\A+\phi-\phi\circ T_\A$, and hence the unique maximizing measure for $f_\A$.

To compute the value $(f_\A+\phi-\phi\circ T_\A)(\frac{b_1}{b_1+d_1})$ we recall
from (\ref{fata10}) that
$$
f_\A\left( \frac{b_1}{b_1+d_1}\right)
=  f_\A(T_{A_1}(0)) = \log(b_1+d_1)\,,
$$
and note that
$$
\phi\left(T_\A\left(\frac{b_1}{b_1+d_1}\right)\right)=\phi(0) = 0
\,,
$$
and 
$$
\phi\left( \frac{b_1}{b_1+d_1}\right)=\log\left( \frac{ \frac{b_1}{b_1+ d_1} +\rho_{A_0}}{\rho_{A_0}}\right)
=\log\left(\frac{\left( d_1 +b_1(1+\rho_{A_0}^{-1})\right)}{b_1+d_1}\right)
\,.
$$
Therefore
\begin{equation}\label{endpointformula}
(f_\A+\phi-\phi\circ T_\A)\left(\frac{b_1}{b_1+d_1}\right)
=
\log \left( d_1 +b_1(1+\rho_{A_0}^{-1})\right)
 \,.
\end{equation}

By Lemma \ref{fapai},
\begin{equation}\label{fpt0}
f_\A(p_{A_0})
=
\log\left( \frac{\det A_0}{a_0 - b_0 ( 1+\rho_{A_0}^{-1})}
\right)\,,
\end{equation}
so (\ref{endpointformula}) and (\ref{fpt0}) imply that the desired inequality
$$(f_\A+\phi-\phi\circ T_\A)\left(\frac{b_1}{b_1+d_1}\right)\le f_\A(p_{A_0})$$
is precisely the hypothesis (\ref{t0larger1}),
since
\begin{equation*}
t_0(\A)
=
\frac{\det A_0}{ \left( a_0 - b_0 ( 1+\rho_{A_0}^{-1})\right) \left( d_1 +b_1(1+\rho_{A_0}^{-1})\right)}
\end{equation*}
by (\ref{tiaexpressions}).

It remains to show that $f_\A+\phi-\phi\circ T_\A$ is strictly decreasing on $X_{A_1}$.
Suppose $x\in X_{A_1}$. We know by (\ref{explicitfalt}) that
$$
f_\A(x) =  \log  \left( \frac{\det A_1}{-\alpha_{A_1}(x+ \sigma_{A_1})} \right)\,.
$$
Now 
$$
\phi(x) = \log \left( \frac{x+\rho_{A_0}}{\rho_{A_0}}\right)\,,
$$
so 
$$
\phi(T_\A(x)) 
=
\log \left( \frac{S_{A_1}(x)+\rho_{A_0}}{\rho_{A_0}}\right)\,,
$$
and therefore
$$
(f_\A+\phi-\phi\circ T_\A)(x) 
=
\log\left( \frac{\det A_1 (x+\rho_{A_0})}{-\alpha_{A_1}(x+\sigma_{A_1})(S_{A_1}(x)+\rho_{A_0})}\right)\,.
$$
It therefore suffices to show that
\begin{equation}\label{suffdec}
x\mapsto \frac{x+\rho_{A_0}}{-\alpha_{A_1}(x+\sigma_{A_1})(S_{A_1}(x)+\rho_{A_0})}
\end{equation}
is strictly decreasing.
For this note that
$$
S_{A_1}(x)+\rho_{A_0}
= \frac{(b_1+d_1)x-b_1}{-\alpha_{A_1}(x+\sigma_{A_1})}+\rho_{A_0}
=
\frac{ (b_1+d_1-\alpha_{A_1}\rho_{A_0})x +(a_1-b_1)\rho_{A_0}-b_1}{-\alpha_{A_1}(x+\sigma_{A_1})}
$$
so (\ref{suffdec}) is seen to be the M\"obius function
\begin{equation*}
x\mapsto \frac{x+\rho_{A_0}}{ (b_1+d_1-\alpha_{A_1}\rho_{A_0})x +(a_1-b_1)\rho_{A_0}-b_1} \,,
\end{equation*}
which is known to be strictly decreasing by Lemma \ref{techderivlemma}.
\end{proof}

As a consequence of Theorem \ref{t0larger1theorem} we obtain:

\begin{cor}\label{t0largerttheorem}
If $\A\in\mm$ and $t\in \mathbb{R}^+$ are such that
\begin{equation}\label{t0largert}
t \le t_0(\A) \,,
\end{equation}
then the Dirac measure at the fixed point $p_{A_0}$ is the unique $f_{\A(t)}$-maximizing measure;
in particular, the joint spectral radius of $\A(t)$ is equal to the spectral radius of $A_0$.
\end{cor}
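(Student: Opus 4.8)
The plan is to deduce this corollary directly from Theorem \ref{t0larger1theorem}, applying that result to the pair $\A(t)=(A_0,tA_1)$ in place of $\A$. The point is that scaling $A_1$ by $t$ leaves the induced dynamical system (hence the space $\m_\A=\m_{\A(t)}$ of invariant measures) unchanged by Lemma \ref{simple}(i), and turns the hypothesis $t\le t_0(\A)$ into the hypothesis $t_0(\A(t))\ge1$ needed to invoke Theorem \ref{t0larger1theorem}.

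First I would check that $\A(t)\in\mm$ whenever $\A=(A_0,A_1)\in\mm$ and $t>0$: replacing $A_1$ by $tA_1$ multiplies $b_1$ and $d_1$ both by $t$, so the ratio $b_1/d_1$ appearing in (\ref{definequal1}) is unchanged, and it multiplies $a_1+c_1-b_1-d_1$ by $t>0$, hence preserves the sign condition in (\ref{definequal2}). Thus $\A(t)\in\mm$, and in particular the quantity $t_0(\A(t))$ is well defined. Next, by the scaling identity (\ref{tti}) we have $t_0(\A(t))=t_0(\A)/t$, so the hypothesis $t\le t_0(\A)$ is precisely equivalent to $t_0(\A(t))\ge1$, which is hypothesis (\ref{t0larger1}) of Theorem \ref{t0larger1theorem} applied to $\A(t)$.

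Theorem \ref{t0larger1theorem}, applied to the pair $\A(t)$, then states that the Dirac measure at the fixed point of the \emph{first} matrix of that pair is the unique $f_{\A(t)}$-maximizing measure, and that the joint spectral radius of $\A(t)$ equals the spectral radius of that first matrix. Since the first matrix of $\A(t)=(A_0,tA_1)$ is $A_0$, its fixed point (in the sense of (\ref{paformula})) is $p_{A_0}$, and $r(\A(t))=r(A_0)$. This is exactly the assertion of the corollary.

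The argument is essentially bookkeeping, so there is no substantive obstacle; the only things requiring a small verification are that $\A(t)$ again lies in $\mm$ (so that Theorem \ref{t0larger1theorem} is applicable) and the scaling identity (\ref{tti}) converting $t\le t_0(\A)$ into $t_0(\A(t))\ge1$. One should also note explicitly that "$f_{\A(t)}$-maximizing" refers to maximization over $\m_{\A(t)}=\m_\A$, which is legitimate because $T_{\A(t)}=T_\A$.
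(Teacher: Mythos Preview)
Your proof is correct and follows essentially the same approach as the paper: invoke the scaling identity (\ref{tti}) to convert $t\le t_0(\A)$ into $t_0(\A(t))\ge1$, then apply Theorem \ref{t0larger1theorem} to the pair $\A(t)$. Your explicit verification that $\A(t)\in\mm$ is a helpful addition that the paper leaves implicit.
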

\begin{proof}
The assumption (\ref{t0largert}) means, using (\ref{tti}),
that $t_0(\A(t))\ge 1$, so the result follows by applying
Theorem \ref{t0larger1theorem} 
with $\A$ replaced by $\A(t)$.
\end{proof}

We now turn to an analogous sufficient condition for the projectively convex matrix $A_1$
to be dominant:

\begin{theorem}\label{t1smaller1theorem}
If $\A\in\mm$ is such that
\begin{equation}\label{t1smaller1}
t_1(\A) \le 1\,,
\end{equation}
then the Dirac measure at the fixed point $p_{A_1}$ is the unique $f_\A$-maximizing measure;
in particular, the joint spectral radius of $\A$ is equal to the spectral radius of $A_1$.
\end{theorem}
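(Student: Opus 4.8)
The plan is to mirror the proof of Theorem~\ref{t0larger1theorem}, interchanging the roles of the projectively concave matrix $A_0$ and the projectively convex matrix $A_1$. Concretely, I would take as transfer function $\phi = \phi_{\Gamma_1}$, the Sturmian transfer function attached to the rightmost extremal interval $\Gamma_1 = X_{A_1}$, which by Lemma~\ref{easyphiformulae} is $\phi(x) = \log\!\left(\frac{x+\rho_{A_1}}{\rho_{A_1}}\right)$ (well defined since $\rho_{A_1} < -1$ and $x+\rho_{A_1} < 0$ on $X$ by Corollary~\ref{posneg}). By Lemma~\ref{flatxai} applied with $t = 1$, the function $f_\A + \phi - \phi\circ T_\A$ is then constant on $X_{A_1} = \Gamma_1$, and since $p_{A_1}\in X_{A_1}$ is fixed by $T_\A$ this constant equals $f_\A(p_{A_1})$. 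It then suffices to establish (i) that $f_\A + \phi - \phi\circ T_\A$ is strictly increasing on $X_{A_0}$, and (ii) that its value at the right endpoint $T_{A_0}(1) = \frac{a_0}{a_0+c_0}$ of $X_{A_0}$ is at most $f_\A(p_{A_1})$. Granting (i) and (ii), one has $f_\A + \phi - \phi\circ T_\A \le f_\A(p_{A_1})$ on all of $X_\A$, with equality only on $\Gamma_1$ together with, at worst, the single point $T_{A_0}(1)$; as the latter has $T_\A$-image equal to $1\notin X_\A$ it lies outside $Y_\A$ and so carries no invariant mass, whence every $f_\A$-maximizing measure is supported in $X_{A_1}$ and hence equals $\delta_{p_{A_1}}$ (the only $T_\A$-invariant measure supported in $X_{A_1}$, exactly as in the proof of Theorem~\ref{t0larger1theorem}). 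The assertion $r(\A) = r(A_1)$ then follows, since $f_\A(p_{A_1}) = \log r(A_1)$ by Corollary~\ref{specradiusta} and $\log r(\A) = \int f_\A\,d\delta_{p_{A_1}}$ by Theorem~\ref{maxtheorem}.

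For step (i), I would substitute into $f_\A + \phi - \phi\circ T_\A$ on $X_{A_0}$ the explicit formula $(\ref{explicitfalt})$ for $f_\A$, the formula above for $\phi$, and the formula for $S_{A_0} = T_\A|_{X_{A_0}}$. Using $\alpha_{A_0}\sigma_{A_0} = b_0 - a_0$ to simplify $S_{A_0}(x)+\rho_{A_1}$, one cancels the common factor $-\alpha_{A_0}(x+\sigma_{A_0})$ (positive on $X_{A_0}$ by Corollary~\ref{alwaysposcor}), exactly as in the proof of Theorem~\ref{t0larger1theorem}, and the quantity reduces to the logarithm of the M\"obius map
\[
x \longmapsto \frac{x + \rho_{A_1}}{(b_0 + d_0 - \alpha_{A_0}\rho_{A_1})x + (a_0 - b_0)\rho_{A_1} - b_0}\,,
\]
which is precisely the second M\"obius map in Lemma~\ref{techderivlemma} and so has strictly positive derivative.

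For step (ii), evaluating at $x = \frac{a_0}{a_0+c_0}$ and using $f_\A(T_{A_0}(1)) = \log(a_0 + c_0)$ from $(\ref{fata01})$, $\phi(T_\A(\tfrac{a_0}{a_0+c_0})) = \phi(1) = \log\frac{1+\rho_{A_1}}{\rho_{A_1}}$, and the explicit form of $\phi$, a short computation gives
\[
\bigl(f_\A + \phi - \phi\circ T_\A\bigr)\!\left(\tfrac{a_0}{a_0+c_0}\right) = \log\!\left(a_0 + c_0\bigl(1 + \rho_{A_1}^{-1}\bigr)^{-1}\right).
\]
Comparing with $f_\A(p_{A_1}) = \log\!\left(\frac{\det A_1}{a_1 - b_1(1+\rho_{A_1}^{-1})}\right)$ from Lemma~\ref{fapai}, and noting $1+\rho_{A_1}^{-1}\in(0,1)$ so that both $a_0 + c_0(1+\rho_{A_1}^{-1})^{-1} > 0$ and $a_1 - b_1(1+\rho_{A_1}^{-1}) > 0$, the desired inequality becomes $\bigl(a_0 + c_0(1+\rho_{A_1}^{-1})^{-1}\bigr)\bigl(a_1 - b_1(1+\rho_{A_1}^{-1})\bigr) \le \det A_1$, i.e.\ exactly the hypothesis $t_1(\A) \le 1$ in view of the expression $(\ref{tiaexpressionsi1})$ for $t_1(\A)$.

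I do not anticipate a genuine obstacle here, as the argument is the mirror image of that for Theorem~\ref{t0larger1theorem}; the only point needing care is sign and orientation bookkeeping — checking that on the concave side $X_{A_0}$ the relevant M\"obius quantity is strictly \emph{increasing} (so its supremum sits at the \emph{right} endpoint $T_{A_0}(1)$, the one adjacent to $X_{A_1}$, rather than at the left endpoint), and that the signs of $x + \rho_{A_1}$, $-\alpha_{A_0}(x+\sigma_{A_0})$, and $a_1 - b_1(1+\rho_{A_1}^{-1})$ are tracked consistently when passing to logarithms.
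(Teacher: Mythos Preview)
Your proposal is correct and follows essentially the same argument as the paper's proof: the same transfer function $\phi=\phi_{\Gamma_1}$, the same reduction on $X_{A_0}$ to the M\"obius map handled by Lemma~\ref{techderivlemma}, and the same endpoint computation identifying the inequality with $t_1(\A)\le 1$ via $(\ref{tiaexpressionsi1})$. Your extra remark that $T_{A_0}(1)$ lies outside $Y_\A$ (so carries no invariant mass in the borderline equality case) is a harmless refinement the paper leaves implicit.
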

\begin{proof}
Choosing 
$\phi(x) = \phi_{\Gamma_1}(x) = \log \left(\frac{x+\rho_{A_1}}{\rho_{A_1}}\right)$
ensures,
by Lemma \ref{flatxai}, 
 that
$f_\A+\phi-\phi\circ T_\A$ is constant when restricted to $X_{A_1} = \Gamma_1$,
and the constant value assumed by this function is clearly $f_\A(p_{A_1})$.
The result will follow if we can show that $f_\A+\phi-\phi\circ T_\A$ is strictly increasing
on $X_{A_0}$,
and that the value $(f_\A+\phi-\phi\circ T_\A)(\frac{a_0}{a_0+c_0})$
at the right endpoint of $X_{A_0}$ is no greater than the constant value $f_\A(p_{A_1})$.
This is because
the Dirac measure $\delta_{p_{A_1}}$ will then clearly be the unique maximizing measure for
$f_\A+\phi-\phi\circ T_\A$, and hence the unique maximizing measure for $f_\A$.

To compute the value $(f_\A+\phi-\phi\circ T_\A)(\frac{a_0}{a_0+c_0})$ we recall
from (\ref{fata01}) that
$$
f_\A\left( \frac{a_0}{a_0+c_0}\right)
=  f_\A(T_{A_0}(1)) = \log(a_0+c_0)\,,
$$
and note that
$$
\phi\left(T\left(\frac{a_0}{a_0+c_0}\right)\right)=\phi(1) = \log\left( \frac{1+\rho_{A_1}}{\rho_{A_1}}\right)
= \log(1+\rho_{A_1}^{-1})
\,,
$$
and 
$$
\phi\left( \frac{a_0}{a_0+c_0}\right)=\log\left( \frac{ \frac{a_0}{a_0+c_0} +\rho_{A_1}}{\rho_{A_1}}\right)
=\log\left(\frac{\left( c_0 +a_0(1+\rho_{A_1}^{-1})\right)}{a_0+c_0}\right)
\,.
$$
Therefore
\begin{equation}\label{endpointformula2}
(f_\A+\phi-\phi\circ T_\A)\left(\frac{a_0}{a_0+c_0}\right)
=
\log \left( a_0 +c_0(1+\rho_{A_1}^{-1})^{-1} \right)
 \,.
\end{equation}

By Lemma \ref{fapai},
\begin{equation}\label{fpt02}
f_\A(p_{A_1})
=
\log\left( \frac{\det A_1}{a_1 - b_1 ( 1+\rho_{A_1}^{-1})}
\right)\,,
\end{equation}
so (\ref{endpointformula2}) and (\ref{fpt02}) imply that the desired inequality
$$(f_\A+\phi-\phi\circ T_\A)\left(\frac{a_0}{a_0+c_0}\right)\le f_\A(p_{A_1})$$
is precisely the hypothesis (\ref{t1smaller1}),
since
\begin{equation*}
t_1(\A)
=
\frac{ \left(a_0+c_0(1+\rho_{A_1}^{-1})^{-1}\right) \left(a_1-b_1(1+\rho_{A_1}^{-1})\right)}{\det A_1}
\end{equation*}
by (\ref{tiaexpressions}).

It remains to show that $f_\A+\phi-\phi\circ T_\A$ is strictly increasing on $X_{A_0}$.
Suppose $x\in X_{A_0}$. We know by (\ref{explicitfalt}) that
$$
f_\A(x) =  \log  \left( \frac{\det A_0}{-\alpha_{A_0}(x+ \sigma_{A_0})} \right)\,.
$$
Now 
$$
\phi(x) = \log \left( \frac{x+\rho_{A_1}}{\rho_{A_1}}\right)\,,
$$
so 
$$
\phi(T_\A(x)) 
=
\log \left( \frac{S_{A_0}(x)+\rho_{A_1}}{\rho_{A_1}}\right)\,,
$$
and therefore
$$
(f_\A+\phi-\phi\circ T_\A)(x) 
=
\log\left( \frac{\det A_0 (x+\rho_{A_1})}{-\alpha_{A_0}(x+\sigma_{A_0})(S_{A_0}(x)+\rho_{A_1})}\right)\,.
$$
It therefore suffices to show that
\begin{equation}\label{suffinc}
x\mapsto \frac{x+\rho_{A_1}}{-\alpha_{A_0}(x+\sigma_{A_0})(S_{A_0}(x)+\rho_{A_1})}
\end{equation}
is strictly increasing.
For this note that
$$
S_{A_0}(x)+\rho_{A_1}
= \frac{(b_0+d_0)x-b_0}{-\alpha_{A_0}(x+\sigma_{A_0})}+\rho_{A_1}
=
\frac{ (b_0+d_0-\alpha_{A_0}\rho_{A_1})x +(a_0-b_0)\rho_{A_1}-b_0}{-\alpha_{A_0}(x+\sigma_{A_0})}
$$
so (\ref{suffinc}) is seen to be the M\"obius function
\begin{equation*}
x\mapsto \frac{x+\rho_{A_1}}{ (b_0+d_0-\alpha_{A_0}\rho_{A_1})x +(a_0-b_0)\rho_{A_1}-b_0} \,,
\end{equation*}
which is known to be strictly increasing by Lemma \ref{techderivlemma}.
\end{proof}

As a consequence of Theorem \ref{t1smaller1theorem}
we obtain:

\begin{cor}\label{t1smallerttheorem}
If $\A\in\mm$ and $t\in \mathbb{R}^+$ are such that
\begin{equation}\label{t1smallert}
t \ge t_1(\A) \,,
\end{equation}
then the Dirac measure at the fixed point $p_{A_1}$ is the unique $f_{\A(t)}$-maximizing measure;
in particular, the joint spectral radius of $\A(t)$ is equal to the spectral radius of $tA_1$.
\end{cor}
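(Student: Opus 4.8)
The plan is to deduce this directly from Theorem \ref{t1smaller1theorem} by the same rescaling device used to prove Corollary \ref{t0largerttheorem}. The first point to record is that $\A(t)\in\mm$ for every $t>0$: rescaling $A_1$ by the positive factor $t$ leaves the induced map $T_{A_1}$, the induced image $X_{A_1}$, and the sign of $\alpha_{A_1}$ unchanged (Remark \ref{invariantposmultiple} together with Lemma \ref{convexequivconditions}), and it does not affect $A_0$ at all, so both defining inequalities of Definition \ref{frakmdefn} continue to hold. Hence Theorem \ref{t1smaller1theorem} is legitimately applicable to the pair $\A(t)$ as soon as $t_1(\A(t))\le 1$.

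Next I would translate the hypothesis $t\ge t_1(\A)$ into precisely this condition. By the scaling identity (\ref{tti}) we have $t_1(\A(t))=t_1(\A)/t$, so $t\ge t_1(\A)$ gives $t_1(\A(t))\le 1$, which is exactly hypothesis (\ref{t1smaller1}) for the pair $\A(t)$. Applying Theorem \ref{t1smaller1theorem} to $\A(t)=(A_0,tA_1)$ then yields that the Dirac measure at the fixed point of $T_{tA_1}$ is the unique $f_{\A(t)}$-maximizing measure, and that the joint spectral radius of $\A(t)$ equals the spectral radius of the second matrix of the pair, namely $r(tA_1)$. Finally, since scaling a positive matrix by a positive scalar does not change the associated induced map (Remark \ref{invariantposmultiple}), the fixed point of $T_{tA_1}$ is just $p_{A_1}$, which gives the statement exactly as phrased.

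There is no genuine obstacle here: the entire substance of the argument lives in Theorem \ref{t1smaller1theorem}, and the only things needing a word of justification are the stability of $\mm$ under $A_1\mapsto tA_1$ and the identity $p_{tA_1}=p_{A_1}$, both immediate from results already established. If anything, the one place to be mildly careful is bookkeeping the roles of $A_0$ versus $A_1$ (equivalently, $\Gamma_0$ versus $\Gamma_1$) so that one invokes the ``$A_1$ dominates'' statement rather than its mirror image, but this is purely a matter of matching the indices in (\ref{tti}) and (\ref{t1smaller1}).
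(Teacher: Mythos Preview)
Your proof is correct and follows essentially the same route as the paper: use the scaling identity (\ref{tti}) to convert $t\ge t_1(\A)$ into $t_1(\A(t))\le1$, then apply Theorem \ref{t1smaller1theorem} to $\A(t)$. Your version is in fact slightly more careful than the paper's, since you explicitly justify that $\A(t)\in\mm$ and that $p_{tA_1}=p_{A_1}$, both of which the paper leaves tacit.
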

\begin{proof}
The assumption (\ref{t1smallert}) means, using (\ref{tti}),
that $t_1(\A(t))\le 1$, so the result follows by applying
Theorem \ref{t1smaller1theorem} 
with $\A$ replaced by $\A(t)$.
\end{proof}

\section{Sturmian maximizing measures}\label{technicalsection}

It is at this point that we make the extra hypothesis 
that the matrix pair $\A$ lies in the class $\nn \subset \mm$.
By Lemma \ref{posnegf}(ii) we know that if $\A\in\mm$ then $f_\A$ is strictly increasing on $X_{A_0}$ and strictly decreasing on $X_{A_1}$; the following result asserts that
if we make the stronger hypothesis that $\A\in\nn$ then
 these monotonicity properties
are inherited by all functions formed by adding a Sturmian transfer function $\phi_\Gamma$  to $f_\A$.

\begin{prop}\label{increasingdecreasingprop}
Let $\A\in\nn$.
For each $\A$-Sturmian interval $\Gamma\in \ss_\A$, 
the function $f_\A+\varphi_\Gamma:X_\A\to\mathbb{R}$ is strictly increasing on $X_{A_0}$,
and strictly decreasing on $X_{A_1}$.
\end{prop}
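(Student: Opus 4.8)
The plan is to prove that the derivative $g':=(f_\A+\varphi_\Gamma)'$ is strictly positive on $X_{A_0}$ and strictly negative on $X_{A_1}$; since $g:=f_\A+\varphi_\Gamma$ is Lipschitz (hence absolutely continuous) on $X_\A$ by Corollary~\ref{lipcont}, this yields the asserted strict monotonicities. The first step is to record the ``cohomological'' identity for derivatives: reindexing the series defining $\varphi_\Gamma'$ in Lemma~\ref{phiexists} gives $g'=f_\A'+(g'\circ\tau_\Gamma)\cdot\tau_\Gamma'$ almost everywhere, so $g'$ is the \emph{unique} bounded fixed point of the operator $Lu:=f_\A'+(u\circ\tau_\Gamma)\cdot\tau_\Gamma'$ acting on bounded Borel functions on $X_\A$ with the sup norm (uniqueness because $\|(\tau_\Gamma^n)'\|_\infty=O(\theta^n)$, $\theta\in(0,1)$, exactly as in the proof of Lemma~\ref{phiexists}, so $L$ is eventually a contraction; any sup-norm-closed $L$-invariant set then contains $g'$).

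Next I would introduce the two \emph{extremal} comparison functions $h_i:=(f_\A+\varphi_{\Gamma_i})'$, $i\in\{0,1\}$. By Lemma~\ref{easyphiformulae}, $\varphi_{\Gamma_i}'(x)=(x+\rho_{A_i})^{-1}$, and by Lemma~\ref{simple}(iv), $f_\A'(x)=-(x+\sigma_{A_j})^{-1}$ on $X_{A_j}$, so $h_0,h_1$ are given by explicit rational formulae on each of $X_{A_0},X_{A_1}$. Combining the sign facts $x+\sigma_{A_0}<0<x+\sigma_{A_1}$ on the respective images (Corollary~\ref{alwaysposcor}) and $x+\varrho_{A_1}<0<x+\varrho_{A_0}$ (Corollary~\ref{posneg}) with the two inequalities $\rho_{A_1}<\sigma_{A_0}$ and $\sigma_{A_1}<\rho_{A_0}$ defining $\nn$, a short calculation gives $0<h_1\le h_0$ on $X_{A_0}$ and $h_1\le h_0<0$ on $X_{A_1}$. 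Consequently the ``box'' $\mathcal C:=\{\,u:h_1\le u\le h_0\text{ a.e. on }X_\A\,\}$ is a nonempty, convex, sup-norm-closed set every member of which is strictly positive on $X_{A_0}$ and strictly negative on $X_{A_1}$. Thus it suffices to prove $g'\in\mathcal C$, and for this it is enough to prove $L(\mathcal C)\subseteq\mathcal C$.

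To verify $L(\mathcal C)\subseteq\mathcal C$, fix $u\in\mathcal C$ and $x\in X_\A$; then $\tau_\Gamma(x)$ equals either $T_{A_0}(x)\in X_{A_0}$ or $T_{A_1}(x)\in X_{A_1}$, and $\tau_\Gamma'(x)>0$. If $\tau_\Gamma(x)=T_{A_0}(x)$, then $(Lu)(x)\le f_\A'(x)+h_0(T_{A_0}x)\,T_{A_0}'(x)=h_0(x)$, the displayed equality being the $n=0$ term of the identity $h_0=f_\A'+(h_0\circ T_{A_0})T_{A_0}'$; and $(Lu)(x)\ge f_\A'(x)+h_1(T_{A_0}x)\,T_{A_0}'(x)$, which is $\ge h_1(x)$ precisely when $(h_1\circ T_{A_0})'(x)\ge(h_1\circ T_{A_1})'(x)$ (using $h_1=f_\A'+(h_1\circ T_{A_1})T_{A_1}'$). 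The case $\tau_\Gamma(x)=T_{A_1}(x)$ is symmetric and reduces to $(h_0\circ T_{A_0})'(x)\ge(h_0\circ T_{A_1})'(x)$. So everything comes down to the single family of inequalities
\begin{equation*}
(\psi\circ T_{A_0})'(x)\ \ge\ (\psi\circ T_{A_1})'(x)\qquad\text{for all }x\in X_\A,\ \psi\in\{\,f_\A+\varphi_{\Gamma_0},\ f_\A+\varphi_{\Gamma_1}\,\}.
\end{equation*}
Here $f_\A(T_{A_j}(x))=\log(\alpha_{A_j}x+b_j+d_j)$ (from the derivative formula (\ref{taderiv}) together with $S_{A_j}\circ T_{A_j}=\mathrm{id}$, consistently with (\ref{fata01}) and (\ref{fata10})), and $\varphi_{\Gamma_i}\circ T_{A_j}$ is again a M\"obius map, so each side above is an explicit rational function of $x$; the inequality is then checked by direct computation using $\det A_j>0$, the signs $\alpha_{A_0}>0>\alpha_{A_1}$ (Lemmas~\ref{concaveequivconditions} and \ref{convexequivconditions}), and the defining inequalities of $\nn$.

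I expect this last rational-function inequality to be the main obstacle: it is the only genuinely computational point, and it is exactly where the full strength of $\A\in\nn$ is used — already the monotonicity of the extremal functions $f_\A+\varphi_{\Gamma_0}$ and $f_\A+\varphi_{\Gamma_1}$ on the ``opposite'' image consumes one of the two inequalities $\rho_{A_1}<\sigma_{A_0}$, $\sigma_{A_1}<\rho_{A_0}$, while the interpolation to a general $\Gamma$ through the invariant box $\mathcal C$ needs both. Everywhere else the argument is soft: reindexing of series, the closed-invariant-set/contraction principle, and bookkeeping of signs already established in \S\ref{projconvprojconc}.
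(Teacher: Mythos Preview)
Your proof is correct and takes a genuinely different route from the paper's. The paper argues directly: for $x\in X_{A_0}$ it partitions the series $\sum_{n\ge0}(f_\A\circ\tau_\Gamma^n)'(x)$ into blocks indexed by the successive returns of the $\tau_\Gamma$-orbit to $X_{A_0}$; within each block the orbit applies only $T_{A_1}$, so the finite block sum is bounded below (using Lemma~\ref{posnegf}(iii)) by the full tail $-(z+\sigma_{A_0})^{-1}+(z+\rho_{A_1})^{-1}$, which is strictly positive by $\rho_{A_1}<\sigma_{A_0}$. The case $x\in X_{A_1}$ is symmetric, using $\sigma_{A_1}<\rho_{A_0}$. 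Your invariant-box argument is more structural: it packages the same two sign estimates into the single assertion that the box $\{h_1\le u\le h_0\}$ is $L_\Gamma$-invariant for \emph{every} $\Gamma$, so the unique fixed point $g'_\Gamma$ lies between the two extremal solutions $h_0,h_1$. What your approach buys is a clean uniform bound $h_1\le (f_\A+\varphi_\Gamma)'\le h_0$ valid for all $\Gamma$ simultaneously; what the paper's buys is brevity, since the block decomposition avoids introducing the operator $L$ and the invariant set altogether.

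Two minor remarks. First, your intermediate notation ``$(h_1\circ T_{A_0})'(x)$'' is a slip (that would be $h_1'(T_{A_0}x)T_{A_0}'(x)$); what you need and what your final displayed inequality in terms of $\psi$ correctly expresses is $h_1(T_{A_0}x)T_{A_0}'(x)=(\psi_1\circ T_{A_0})'(x)$ with $\psi_1'=h_1$. Second, the ``main obstacle'' you anticipate dissolves completely: having already shown $h_i>0$ on $X_{A_0}$ and $h_i<0$ on $X_{A_1}$, and since $T_{A_j}(X)=X_{A_j}$ with $T_{A_j}'>0$, the required inequality $(\psi_i\circ T_{A_0})'(x)\ge(\psi_i\circ T_{A_1})'(x)$ reads $\text{(positive)}\ge\text{(negative)}$, so no further rational-function computation is needed.
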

\begin{proof}
First suppose $x\in X_{A_0}$.
Let $0=i_0<i_1<i_2<\ldots$ be the sequence of all
integers such that $\tau_\Gamma^{i_k}(x) \in X_{A_0}$.

For $k\ge0$, writing $z=\tau_\Gamma^{i_k}(x)$ we see that
if $1\le i<i_{k+1}-i_k$ then
$\tau_\Gamma^i(z)\in X_{A_1}$, 
and thus
$\tau_\Gamma^i(z)=T_{A_1}^i(z)$, so that
\begin{equation}\label{tauta1pos}
\sum_{i=0}^{i_{k+1}-i_k-1} (f_\A\circ \tau_\Gamma^i)'(z)
= 
f_\A'(z) + \sum_{i=1}^{i_{k+1}-i_k-1} (f_\A\circ T_{A_1}^i)'(z) 
>
f_\A'(z) + \sum_{i=1}^{\infty} (f_\A\circ T_{A_1}^i)'(z)\,,
\end{equation}
where the inequality is because
$(f_\A\circ T_{A_1}^i)'(z) <0$ for all $i\ge1$, by Lemma \ref{posnegf}.
Now 
$z\in X_{A_0}$, so
(\ref{fderivative}) in Lemma \ref{simple} (iii)
gives
$f_\A' (z)= -(z+\sigma_{A_0})^{-1}$ (which is positive),
and formula
(\ref{sumformula}) from Corollary \ref{usefulcor} gives
$\sum_{i=1}^\infty (f_\A\circ T_{A_1}^i)'(z) = (z+\varrho_{A_1})^{-1}$ (which is negative), so
(\ref{tauta1pos}) implies that
\begin{equation}\label{sigrho}
\sum_{i=0}^{i_{k+1}-i_k-1} (f_\A\circ \tau_\Gamma^i)'(z)
>
\frac{-1}{z+\sigma_{A_0}} + \frac{1}{z+\varrho_{A_1}} \,.
\end{equation}
However $\A\in\nn$, so $\rho_{A_1}<\sigma_{A_0}$, and therefore the righthand side of
(\ref{sigrho}) is positive, so we have shown that
\begin{equation*}
\sum_{i=0}^{i_{k+1}-i_k-1} (f_\A\circ \tau_\Gamma^i)'(z)
>
0\,.
\end{equation*}

It follows that for all $k\ge0$,
$$
\sum_{n=i_k}^{i_{k+1}-1} (f_\A\circ \tau_\Gamma^n)'(x)
=
(\tau_\Gamma^{i_k})'(x) \sum_{i=0}^{i_{k+1}-i_k-1} (f_\A\circ \tau_\Gamma^i)'(z) > 0\,,
$$
and hence
$$
(f_\A+\phi_\Gamma)'(x)
=\sum_{n=0}^\infty (f_\A\circ \tau_\Gamma^n)'(x)
=\sum_{k=0}^\infty \sum_{n=j_k}^{j_{k+1}-1} (f_\A\circ \tau_\Gamma^n)'(x) >0\,,
$$
so $f_\A+\phi_\Gamma$ is strictly increasing on $X_{A_0}$.

Now suppose $x\in X_{A_1}$. The proof proceeds analogously to the above.
Let $0=j_0<j_1<j_2<\ldots$ be the sequence of all
integers such that $\tau_\Gamma^{j_k}(x) \in X_{A_1}$.

For $k\ge0$, writing $z=\tau_\Gamma^{j_k}(x)$ we see that
if $1\le i<j_{k+1}-j_k$ then
$\tau_\Gamma^i(z)\in X_{A_0}$, 
and thus
$\tau_\Gamma^i(z)=T_{A_0}^i(z)$, so that
\begin{equation}\label{sigrho2}
\sum_{i=0}^{j_{k+1}-j_k-1} (f_\A\circ \tau_\Gamma^i)'(z)
=
f_\A'(z) + \sum_{i=1}^{j_{k+1}-j_k-1} (f_\A\circ T_{A_0}^i)'(z) 
 <
f_\A'(z) + \sum_{i=1}^{\infty} (f_\A\circ T_{A_0}^i)'(z) \,,
\end{equation}
using the fact that
$(f_\A\circ T_{A_0}^i)'(z) >0$ for all $i\ge1$, by Lemma \ref{posnegf}. 
The righthand side of (\ref{sigrho2}) can be written as
$-(z+\sigma_{A_1})^{-1} + (z+\varrho_{A_0})^{-1}$
using Lemma \ref{simple} (iii)
and
Corollary \ref{usefulcor},
and this is strictly negative since $\sigma_{A_1}<\varrho_{A_0}$
because $\A\in\nn$,
so we have shown that
\begin{equation*}
\sum_{i=0}^{j_{k+1}-j_k-1} (f_\A\circ \tau_\Gamma^i)'(z)
<
\frac{-1}{z+\sigma_{A_1}} + \frac{1}{z+\varrho_{A_0}} < 0\,.
\end{equation*}
It follows that for all $k\ge0$,
$$
\sum_{n=j_k}^{j_{k+1}-1} (f_\A\circ \tau_\Gamma^n)'(x)
=
(\tau_\Gamma^{j_k})'(x) \sum_{i=0}^{j_{k+1}-j_k-1} (f_\A\circ \tau_\Gamma^i)'(z) < 0\,,
$$
and hence
$$
(f_\A+\phi_\Gamma)'(x)
=\sum_{n=0}^\infty (f_\A\circ \tau_\Gamma^n)'(x)
=\sum_{k=0}^\infty \sum_{n=j_k}^{j_{k+1}-1} (f_\A\circ \tau_\Gamma^n)'(x) <0\,,
$$
so $f_\A+\phi_\Gamma$ is strictly decreasing on $X_{A_1}$.
\end{proof}

\begin{theorem}\label{thm6}
Let $\A\in\nn$ and $t\in \t_\A= (t_0(\A),t_1(\A))$.
The $\A$-Sturmian measure 
supported by
the $\A$-Sturmian interval $\Gamma_\A(t)$ is the unique maximizing measure
for $f_{\A(t)}$; thus the corresponding Sturmian measure on $\Omega = \{0,1\}^\N$ is the unique $\A(t)$-maximizing measure.
\end{theorem}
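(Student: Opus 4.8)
The plan is to run the standard sub-action argument of ergodic optimization. Write $\Gamma=\Gamma_\A(t)$ for the $\A$-Sturmian interval furnished by \propref{analogue}, set $c=c_\A(\Gamma)$, let $\ell=\int f_{\A(t)}\,ds_\Gamma$, and put $g=f_{\A(t)}+\phi_\Gamma-\phi_\Gamma\circ T_\A$ on $X_\A$. By \propref{analogue} together with \corref{lipcont}, $g$ is Lipschitz on $X_\A$ and $g\equiv\ell$ on $\Gamma$. Everything will follow once we prove that $g\le\ell$ on the whole of $X_\A$ with $\{g=\ell\}=\Gamma$. Indeed, for any $\mu\in\m_\A=\m_{\A(t)}$ invariance gives $\int(\phi_\Gamma-\phi_\Gamma\circ T_\A)\,d\mu=0$, so $\int f_{\A(t)}\,d\mu=\int g\,d\mu\le\ell$; hence $\ell=\max_{\mu\in\m_\A}\int f_{\A(t)}\,d\mu$, attained by $s_\Gamma$, and if $\mu$ is any $f_{\A(t)}$-maximizing measure then $\int(\ell-g)\,d\mu=0$ with $\ell-g\ge0$ continuous, so $\text{supp}\,\mu\subseteq\{g=\ell\}=\Gamma$, whence $\mu$ is $\A$-Sturmian with support in $\Gamma$ and therefore $\mu=s_\Gamma$. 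Since $T_{\A(t)}=T_\A$ and $h_\A^*$ carries $\mathcal{S}$ onto $\sm_\A$ (see \defref{sturmianmeasdef}), the preimage $(h_\A^*)^{-1}(s_\Gamma)$ is the Sturmian measure on $\Omega$ determined by $\Gamma$, and it is the unique $\A(t)$-maximizing measure.

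Thus the task is to show $g<\ell$ on $X_\A\setminus\Gamma$. By (\ref{disjunionsturm}), as a subset of $X$ one has $\Gamma=[T_{A_0}(c),T_{A_0}(1)]\cup[T_{A_1}(0),T_{A_1}(c)]$, so $X_\A\setminus\Gamma$ is the disjoint union of $L=[T_{A_0}(0),T_{A_0}(c))\subset X_{A_0}$ and $R=(T_{A_1}(c),T_{A_1}(1)]\subset X_{A_1}$. The device I would use is the function
$$H(u)=g(T_{A_1}(u))-g(T_{A_0}(u))\qquad(u\in[0,1]).$$
Since $T_\A\circ T_{A_i}$ is the identity on $X$, the coboundary terms cancel and $H=(f_{\A(t)}+\phi_\Gamma)\circ T_{A_1}-(f_{\A(t)}+\phi_\Gamma)\circ T_{A_0}$; on the other hand $g\equiv\ell$ on $\Gamma$ forces $g\circ T_{A_1}\equiv\ell$ on $[0,c]$ and $g\circ T_{A_0}\equiv\ell$ on $[c,1]$. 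Consequently $H(c)=0$; for $u\in[0,c)$ one has $H(u)=\ell-g(T_{A_0}(u))$ with $T_{A_0}(u)$ ranging over $L$, and for $u\in(c,1]$ one has $H(u)=g(T_{A_1}(u))-\ell$ with $T_{A_1}(u)$ ranging over $R$. So $g<\ell$ on $L\cup R$ is exactly the assertion that $H$ is strictly decreasing.

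That $H$ is strictly decreasing is where the hypothesis $\A\in\nn$ does its work. By \propref{increasingdecreasingprop}, for every $\A$-Sturmian interval $\Gamma$ the function $f_\A+\phi_\Gamma$ is strictly increasing on $X_{A_0}$ and strictly decreasing on $X_{A_1}$, and by \lemref{simple}(ii) the same is true of $f_{\A(t)}+\phi_\Gamma$, which differs from it only by the constant $\log t$ on $X_{A_1}$. Since $T_{A_0}$ and $T_{A_1}$ are orientation-preserving (\remref{derivativesremark}), $(f_{\A(t)}+\phi_\Gamma)\circ T_{A_1}$ is strictly decreasing and $(f_{\A(t)}+\phi_\Gamma)\circ T_{A_0}$ strictly increasing in $u$, hence $H$ is strictly decreasing; with $H(c)=0$ this yields $H>0$ on $[0,c)$ and $H<0$ on $(c,1]$, which is what was needed.

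The genuine obstacle lies upstream, in \propref{increasingdecreasingprop}: it is precisely there that both defining inequalities of $\nn$, namely $\rho_{A_1}<\sigma_{A_0}$ and $\sigma_{A_1}<\rho_{A_0}$, are consumed, through the grouping of consecutive $\tau_\Gamma$-return times to $X_{A_0}$ and to $X_{A_1}$. Once that monotonicity statement is in hand, the present theorem needs only the bookkeeping above — keeping careful track of which points $T_{A_i}(u)$ lie in $\Gamma$ — the sign computation for $H$, and the routine passage from a calibrated sub-action to uniqueness of the maximizing measure.
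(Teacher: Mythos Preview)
Your proof is correct and follows essentially the same route as the paper: both arguments use \propref{analogue} to get $g\equiv\ell$ on $\Gamma$, then invoke \propref{increasingdecreasingprop} to see that $f_{\A(t)}+\phi_\Gamma$ is strictly increasing on $X_{A_0}$ and strictly decreasing on $X_{A_1}$, and conclude that $g<\ell$ off $\Gamma$ by comparing the two $T_\A$-preimages of each point. Your packaging via the auxiliary function $H(u)=g(T_{A_1}(u))-g(T_{A_0}(u))$ is a mild reformulation of the paper's direct comparison of pairs $z,z'$ with $T(z)=T(z')$, but the substance is identical.
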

\begin{proof}
Let us write $\varphi = \varphi_{\Gamma_\A(t)}$ and $T=T_\A=T_{\A(t)}$.
We know that
 $f_{\A(t)}+\varphi-\varphi\circ T$
is a constant function when restricted to $\Gamma_\A(t)=: [\gamma_t^-,\gamma_t^+] \cap X_\A$,
by Proposition \ref{analogue}.
In particular, 
$$(f_{\A(t)}+\varphi-\varphi\circ T)\left(\gamma_t^-\right)
=
(f_{\A(t)}+\varphi-\varphi\circ T)\left(\gamma_t^+\right)\,,
$$
and because 
$T(\gamma_t^-)=T(\gamma_t^+)$,
we deduce that
\begin{equation}\label{equality}
(f_{\A(t)}+\varphi)\left(\gamma_t^-\right)
=
(f_{\A(t)}+\varphi)\left(\gamma_t^+\right)
\,.
\end{equation}

But Proposition \ref{increasingdecreasingprop} implies that
$f_{\A(t)}+\varphi$ is strictly increasing on $X_{A_0}$,
and strictly decreasing on $X_{A_1}$, so together with (\ref{equality}) we deduce that
\begin{equation}\label{strongsturmian}
(f_{\A(t)}+\varphi)(x) > (f_{\A(t)}+\varphi)(y)\quad\text{for all }x\in \Gamma_\A(t),\ y\in X_\A \setminus \Gamma_\A(t)\,.
\end{equation}

Consequently, if $z,z'$ are such that $T(z)=T(z')$, with $z\in \Gamma_\A(t)$ and $z'\notin \Gamma_\A(t)$, then
$$
(f_{\A(t)}+\varphi)(z) > (f_{\A(t)}+\varphi)(z')\,,
$$
and hence
$$
(f_{\A(t)}+\varphi-\varphi\circ T)(z) > (f_{\A(t)}+\varphi-\varphi\circ T)(z')\,.
$$
In other words, the constant value of
$f_{\A(t)}+\varphi-\varphi\circ T$ on $\Gamma_\A(t)$ is its global maximum,
and this value is not attained at any point in $X_\A \setminus \Gamma_\A(t)$.

It follows that the Sturmian measure supported by $\Gamma_\A(t)$ is the unique maximizing measure
for $f_{\A(t)}+\varphi-\varphi\circ T$,
and hence the unique maximizing measure for $f_{\A(t)}$.
Thus the corresponding Sturmian measure on $\Omega=\{0,1\}^\N$ is the unique $\A(t)$-maximizing measure.
\end{proof}

Recall from \S \ref{generalsection} that
$\ee \subset M_2(\R)^2$
denotes the set of matrix pairs which are equivalent to some pair in $\nn$, where
equivalence of
$\A=(A_0,A_1)$ and $\A'=(A_0',A_1')$ means that
$A_0'=uP^{-1}A_0P$ and $A_1'=vP^{-1}A_1P$ for some invertible $P$ and $u,v>0$.
We deduce the following theorem:

\begin{theorem}\label{deducedthm}
If $\A\in\ee$ and $t\in\mathbb{R}^+$, then $\A(t)$ has a unique maximizing measure, and this maximizing measure is Sturmian. 
\end{theorem}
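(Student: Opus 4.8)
The plan is to prove the statement first for matrix pairs in $\nn$ and every $t\in\mathbb{R}^+$ — by splitting $\mathbb{R}^+$ at the thresholds $t_0(\A)<t_1(\A)$ and invoking the three results already established — and then to deduce it for all of $\ee$ using the fact that a simultaneous similarity, or a uniform positive rescaling, of a matrix pair leaves the joint spectral radius essentially unchanged and does not alter which sequences (hence which shift-invariant measures on $\Omega$) are maximizing.

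\emph{Step 1: the case $\A\in\nn$.} Fix $\A\in\nn$ and $t\in\mathbb{R}^+$. If $t\le t_0(\A)$, then by Corollary \ref{t0largerttheorem} the Dirac measure $\delta_{p_{A_0}}$ is the unique $f_{\A(t)}$-maximizing measure; since $p_{A_0}=T_{A_0}(p_{A_0})\in X_{A_0}=\Gamma_0$, the leftmost $\A$-Sturmian interval (in the convention of \S\ref{extsturmsection}), the measure $\delta_{p_{A_0}}$ lies in $\sm_\A$ with parameter $0$, i.e.\ corresponds to a Sturmian measure on $\Omega$. Symmetrically, if $t\ge t_1(\A)$, Corollary \ref{t1smallerttheorem} gives that $\delta_{p_{A_1}}$ is the unique $f_{\A(t)}$-maximizing measure, and $p_{A_1}\in X_{A_1}=\Gamma_1$ makes it the $\A$-Sturmian measure of parameter $1$. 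Finally, if $t\in\t_\A=(t_0(\A),t_1(\A))$, Theorem \ref{thm6} gives directly that the Sturmian measure supported on $\Gamma_\A(t)$ is the unique $\A(t)$-maximizing measure. Since $t_0(\A)<t_1(\A)$ these three regimes exhaust $\mathbb{R}^+$, so for every $\A\in\nn$ and every $t\in\mathbb{R}^+$ the pair $\A(t)$ has a unique maximizing measure, and it is Sturmian.

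\emph{Step 2: from $\nn$ to $\ee$.} Let $\A'=(A_0',A_1')\in\ee$, with $A_0'=uP^{-1}A_0P$ and $A_1'=vP^{-1}A_1P$ for some invertible $P$, reals $u,v>0$, and $\A=(A_0,A_1)\in\nn$. For $t\in\mathbb{R}^+$, factoring out $u$ from both entries gives
\[
\A'(t)=(A_0',\,tA_1')=\bigl(uP^{-1}A_0P,\;tvP^{-1}A_1P\bigr)=u\cdot P^{-1}\,\A\!\bigl(tv/u\bigr)\,P,
\]
so $\A'(t)$ is obtained from the pair $\A(tv/u)$ by a simultaneous similarity followed by a uniform rescaling by $u>0$. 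Conjugation by $P$ changes each finite matrix product only by pre- and post-composition with $P^{-1}$ and $P$, and rescaling by $u$ multiplies a length-$n$ product by $u^{n}$; in particular neither operation alters the set of optimizing sequences, hence neither alters the set of maximizing measures on $\Omega$. Since $\A\in\nn$ and $tv/u\in\mathbb{R}^+$, Step 1 shows $\A(tv/u)$ has a unique maximizing measure, which is Sturmian; therefore so does $\A'(t)$, which is the assertion.

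The one point that needs care is definitional rather than substantive. For $\A'\in\ee$ the pair $\A'(t)$ need not lie in $\mm$ (its entries can fail to be positive), so a priori it carries no induced system $T_{\A'(t)}$ or function $f_{\A'(t)}$, and ``maximizing measure for $\A'(t)$'' must be read as an intrinsic object on $\Omega$: a $\sigma$-invariant probability measure $m$ such that $r(\A'(t))=\lim_{n\to\infty}r(B_{\omega_1}\cdots B_{\omega_n})^{1/n}$ for $m$-almost every $\omega$, where $\A'(t)=(B_0,B_1)$. Such a notion is manifestly invariant under simultaneous similarity and uniform rescaling — which is all that Step 2 uses — and, by the Remark following Theorem \ref{maxtheorem}, for pairs in $\mm$ it coincides with the $h^*$-image of the $f$-maximizing measure, so Step 1 feeds into it. With this identification fixed, the remaining ingredients — that $\delta_{p_{A_i}}$ is the period-one Sturmian measure of parameter $i\in\{0,1\}$, and that $t_0(\A)<t_1(\A)$ — are immediate from the material already developed, so the main work is simply checking that the intrinsic and $\mm$-adapted notions of maximizing measure agree.
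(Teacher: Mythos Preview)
Your proof is correct and follows essentially the same approach as the paper: reduce to $\A\in\nn$ via the equivalence relation, then split $\mathbb{R}^+$ into the three regimes $t\le t_0(\A)$, $t\in\t_\A$, $t\ge t_1(\A)$ and invoke Corollaries \ref{t0largerttheorem}, \ref{t1smallerttheorem} and Theorem \ref{thm6} respectively. The paper's own proof is a one-liner that omits the details you supply; in particular your explicit handling of the definitional issue (that $\A'(t)$ need not lie in $\mm$, so ``maximizing measure'' must be read intrinsically on $\Omega$) is a point the paper glosses over but which is worth making precise.
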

\begin{proof}
It suffices to prove the result for $\A\in\nn$, and this is
immediate from Corollaries \ref{t0largerttheorem} and \ref{t1smallerttheorem},
Theorem \ref{thm6}, and
 the fact that $\nn \subset \mm$.
\end{proof}

\section{The parameter map is a devil's staircase}\label{devilsection}

As noted in Remark \ref{conjugacymeasures}, if $\A\in\mm$ then there is a topological conjugacy
$h_\A:\Omega\to Y_\A$ between the the shift map $\sigma:\Omega\to\Omega$ 
and the restriction of $T_\A$ to the Cantor set $Y_\A\subset X_\A$; the map $h_\A$ is strictly increasing with respect to the orders on $\Omega$ and $Y_\A$ (cf.~Remark \ref{sturmianasturmian}).
If $d:\Omega\to [0,1]$ is as in Proposition \ref{sturmianclassical}\,(c), associating to 
$\omega\in\Omega$ the Sturmian parameter of the measure supported by $[0\omega,1\omega]$,
then the map $d_\A:Y_\A\to[0,1]$ given by
 $d_\A=d\circ h_\A^{-1}$
 enjoys the same properties as $d$:

\begin{lemma}
The map $d_\A: Y_\A\to [0,1]$
is continuous, non-decreasing, and surjective. 
The preimage $d_\A^{-1}(\mathcal{P})$ is a singleton if $\mathcal{P}$ is irrational,
and a positive-length closed interval if $\mathcal{P}$ is rational.
\end{lemma}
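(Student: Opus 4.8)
The plan is to deduce every clause directly from the corresponding property of $d:\Omega\to[0,1]$ recorded in Proposition \ref{sturmianclassical}(c), transporting it across the conjugacy $h_\A$; no information about the dynamics of $T_\A$ is needed beyond the facts already in hand, namely that (for $\A\in\mm$) the map $h_\A:\Omega\to Y_\A$ is a homeomorphism which is strictly increasing with respect to the lexicographic order on $\Omega$ and the order inherited by $Y_\A\subseteq X_\A\subseteq[0,1]$. First I would record the immediate consequence that $h_\A^{-1}:Y_\A\to\Omega$ is likewise a strictly increasing homeomorphism.

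Continuity of $d_\A=d\circ h_\A^{-1}$ is then nothing more than continuity of a composition, using that $d$ is continuous by Proposition \ref{sturmianclassical}(c). For the monotonicity clause I would argue that $y\le y'$ in $Y_\A$ forces $h_\A^{-1}(y)\le h_\A^{-1}(y')$ in $\Omega$, and hence $d_\A(y)=d(h_\A^{-1}(y))\le d(h_\A^{-1}(y'))=d_\A(y')$ because $d$ is non-decreasing; surjectivity follows from $d_\A(Y_\A)=d(h_\A^{-1}(Y_\A))=d(\Omega)=[0,1]$, since $h_\A^{-1}$ maps onto $\Omega$.

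For the description of the fibres I would use that $h_\A$ is a bijection, so that $d_\A^{-1}(\mathcal{P})=h_\A\bigl(d^{-1}(\mathcal{P})\bigr)$ for every $\mathcal{P}\in[0,1]$. When $\mathcal{P}$ is irrational, Proposition \ref{sturmianclassical}(c) gives that $d^{-1}(\mathcal{P})$ is a singleton, whose image under the bijection $h_\A$ is again a singleton. When $\mathcal{P}$ is rational, $d^{-1}(\mathcal{P})=[\omega^-,\omega^+]$ is a closed order-interval of $\Omega$ with $\omega^-<\omega^+$; I would check that $h_\A$ carries it onto $[h_\A(\omega^-),h_\A(\omega^+)]\cap Y_\A$ — the inclusion $\subseteq$ being immediate since $h_\A$ is increasing, and the reverse inclusion following because $h_\A^{-1}$ is increasing — and then observe that injectivity of $h_\A$ gives $h_\A(\omega^-)<h_\A(\omega^+)$, so that $d_\A^{-1}(\mathcal{P})$ is a closed interval with distinct (hence positively separated) endpoints, exactly as claimed.

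The argument is a routine transport of structure, and there is no genuine obstacle. The one point deserving a moment's care is the last step: one must make sure that the phrase \emph{positive-length closed interval} is being read compatibly on both sides, i.e.\ that an order-interval of the Cantor set $\Omega$ is sent by the order-preserving homeomorphism $h_\A$ to a set of precisely the form $[a,b]\cap Y_\A$ with real endpoints $a<b$; this is supplied by the two short order-preservation checks indicated above.
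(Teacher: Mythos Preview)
Your proposal is correct and follows exactly the paper's approach: the paper's proof is the single sentence ``Immediate from Proposition \ref{sturmianclassical}\,(c), and the fact that $h_\A$ is strictly increasing,'' and you have simply supplied the details behind that sentence. Your care about what ``positive-length closed interval'' means inside the Cantor set $Y_\A$ (namely a set of the form $[a,b]\cap Y_\A$ with $a<b$) is appropriate and matches the intended reading.
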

\begin{proof}
Immediate from Proposition \ref{sturmianclassical}\,(c), and the fact that $h_\A$ is strictly increasing.
\end{proof}

Note that $d_\A$ associates to $y\in Y_\A$
the parameter of the $\A$-Sturmian measure supported by the $\A$-Sturmian
interval
 $c_\A^{-1}(y)$,
 where
we recall from Definition \ref{henceforthca} that the identification map $c_\A:\ss_\A\to [0,1]$ is defined by
$c_\A(\Gamma)=T_\A(\min \Gamma) = T_\A(\max \Gamma)$.
Of the extensions of the function $d_\A$ from the Cantor set $Y_\A$ to the interval $X=[0,1]$,
there is a unique one giving a non-decreasing self-map $d_\A:X\to [0,1]$.
This extension, which we shall also denote by $d_\A$, is continuous,  and $d_\A(c)$ is just the parameter of the $\A$-Sturmian measure
$s_{c_\A^{-1}(c)}$ (i.e.~of the $\A$-Sturmian measure supported by
the $\A$-Sturmian interval $c_\A^{-1}(c)$) for each $c\in X$.
We therefore have the following:

\begin{cor}\label{dadev}
The map $d_\A: X\to [0,1]$
is continuous, non-decreasing, and surjective. 
The preimage $d_\A^{-1}(\mathcal{P})$ is a singleton if $\mathcal{P}$ is irrational,
and a positive-length closed interval if $\mathcal{P}$ is rational.
\end{cor}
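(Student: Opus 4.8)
The plan is to obtain Corollary \ref{dadev} from the preceding Lemma (which records the four asserted properties for $d_\A$ restricted to the Cantor set $Y_\A$) together with the description, recorded just above the statement, of the extension of $d_\A$ to $X=[0,1]$: for every $c\in X$ the value $d_\A(c)$ is the Sturmian parameter $\mathcal{P}(s_{c_\A^{-1}(c)})$ of the $\A$-Sturmian measure supported by the $\A$-Sturmian interval $c_\A^{-1}(c)$, and on $Y_\A$ this coincides with $d\circ h_\A^{-1}$, because by the conjugacy $h_\A$ and Remark \ref{sturmianasturmian}(a) the interval $c_\A^{-1}(y)$ corresponds, for $y\in Y_\A$, to the Sturmian interval $[0\,h_\A^{-1}(y),\,1\,h_\A^{-1}(y)]$ of $\Omega$.

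The one substantive point is that this really is the unique non-decreasing extension, equivalently that $d_\A$ takes equal values at the two endpoints of each complementary gap of $Y_\A$. First I would note that $X\setminus Y_\A$ is a countable union of open intervals; on the leftmost and rightmost of these, monotonicity alone forces $d_\A$ to equal its value at $p_{A_0}$, resp.\ $p_{A_1}$, i.e.\ $d(0^\infty)=0$, resp.\ $d(1^\infty)=1$. For any other gap, the two endpoints lie in $Y_\A$, and transported by the order-isomorphism $h_\A^{-1}:Y_\A\to\Omega$ they become a pair of sequences consecutive in the lexicographic order on $\Omega$; such a pair necessarily has the form $w01^\infty$, $w10^\infty$ for a finite word $w$, and no point of $\Omega$ lies strictly between them. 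Since the function $d$ of Proposition \ref{sturmianclassical}(c) is non-decreasing and surjective onto $[0,1]$, it must assign the same value to $w01^\infty$ and $w10^\infty$: were these distinct, every (necessarily irrational) number strictly between them would, by monotonicity of $d$, fail to be attained, contradicting surjectivity. Hence $d_\A=d\circ h_\A^{-1}$ matches at the two endpoints of each gap, so setting $d_\A$ equal to this common value throughout the gap yields the unique non-decreasing extension $d_\A:X\to[0,1]$, which is precisely the map $c\mapsto\mathcal{P}(s_{c_\A^{-1}(c)})$.

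With the extension so described, the remaining assertions follow formally. It is non-decreasing by construction; it is surjective since already $d_\A(Y_\A)=[0,1]$ by the Lemma and $Y_\A\subset X$; and a non-decreasing surjection of $[0,1]$ onto $[0,1]$ is automatically continuous, because a jump discontinuity would leave a non-degenerate subinterval of $[0,1]$ unattained. Finally, if $\mathcal{P}$ is irrational then no gap of $Y_\A$ can carry the value $\mathcal{P}$ (its two distinct endpoints would then be two points of $Y_\A$ sharing the value $\mathcal{P}$, contradicting the Lemma), so $d_\A^{-1}(\mathcal{P})$ equals $(d_\A|_{Y_\A})^{-1}(\mathcal{P})$, a single point by the Lemma; while if $\mathcal{P}$ is rational then $(d_\A|_{Y_\A})^{-1}(\mathcal{P})$ is a positive-length closed subset of $Y_\A$ by the Lemma, and since $d_\A$ is continuous, non-decreasing and constant on each gap, $d_\A^{-1}(\mathcal{P})$ is exactly the convex hull of this set, a positive-length closed interval. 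The only step that is not purely formal is the matching of $d_\A$ across the gaps, and that is where Proposition \ref{sturmianclassical}(c) is used; everything else is inherited from the Lemma.
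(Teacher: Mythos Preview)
Your proof is correct and follows the same approach as the paper, which treats the Corollary as an immediate consequence of the preceding Lemma together with the description of the non-decreasing extension; you have simply supplied the details the paper leaves implicit, in particular the verification (via surjectivity and monotonicity of $d$) that $d_\A$ agrees at the two endpoints of each gap of $Y_\A$.
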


\begin{defn}
For $\A\in\nn$, let $\mathcal{P}_\A(t)$ denote the parameter of the Sturmian maximizing measure for $\A(t)$,
or equivalently of the $\A$-Sturmian $f_{\A(t)}$-maximizing measure.
This defines the \emph{parameter map} $\mathcal{P}_\A:\mathbb{R}^+\to[0,1]$.
\end{defn}
Recalling 
(see Proposition \ref{analogue})
the map $t\mapsto \Gamma_\A(t)$ associating $\A$-Sturmian interval to parameter 
$t\in\t_\A =  (t_0(\A),t_1(\A))$,
we see that in fact the map $\mathcal{P}_\A:\t_\A\to X$ can be written as
\begin{equation}\label{rfactor}
\mathcal{P}_\A = d_\A\circ c_\A\circ \Gamma_\A\,.
\end{equation}
This means that $\mathcal{P}_\A$ will enjoy the same properties as established for $d_\A$ in Corollary
\ref{dadev}, provided $c_\A\circ \Gamma_\A$ is strictly increasing:

\begin{lemma}\label{strictinccomp}
For $\A\in\mm$, the map $c_\A\circ \Gamma_\A: \t_\A\to X$ is strictly increasing and surjective.
\end{lemma}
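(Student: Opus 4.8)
The plan is to unpack the composition $c_\A\circ\Gamma_\A$ using the characterising equation $\Delta_\A(\Gamma_\A(t))=G_\A(t)$ from Proposition~\ref{analogue}. Since $G_\A(t)=\log\bigl((a_0+c_0)/(b_1+d_1)\bigr)-\log t$ is a strictly decreasing bijection from $\t_\A=(t_0(\A),t_1(\A))$ onto the open interval $(\Delta_\A(\Gamma_1),\Delta_\A(\Gamma_0))$, it suffices to show that the map $\Delta_\A:[\Gamma_0,\Gamma_1]\to\R$ restricted to its natural domain is \emph{strictly decreasing} as a function of $c_\A(\Gamma)$, and hence a homeomorphism onto $[\Delta_\A(\Gamma_1),\Delta_\A(\Gamma_0)]$; for then $\Gamma_\A=\Delta_\A^{-1}\circ G_\A$ followed by $c_\A$ will be strictly increasing and surjective onto $X=[0,1]$. (Recall $c_\A:\ss_\A\to X$ is already known to be a bijection by Definition~\ref{henceforthca}, and the extremal cases $c=0,1$ correspond to $\Gamma_0,\Gamma_1$.)

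Thus the crux is monotonicity of $c\mapsto\Delta_\A(c_\A^{-1}(c))$. First I would rewrite $\Delta_\A(\Gamma)$ via formula~(\ref{Deltadefnalt}) and the integral representation~(\ref{phigammaexpression}):
\begin{equation*}
\Delta_\A(\Gamma)=\int f_\A'\cdot\Bigl(H_1(\Gamma)-H_{a_0/(a_0+c_0)}(\Gamma)+H_{b_1/(b_1+d_1)}(\Gamma)\Bigr)\,dx,
\end{equation*}
where $H_z(\Gamma)=\sum_{n\ge1}\mathbbm 1_{\tau_\Gamma^n[0,z]}$. Fix two Sturmian intervals $\Gamma<\Gamma'$ in the order on $\ss_\A$, equivalently $c:=c_\A(\Gamma)<c_\A(\Gamma')=:c'$. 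Using the explicit description~(\ref{disjunionsturm}) of a Sturmian interval and the definition~(\ref{hybrid}) of the hybrid contraction $\tau_\Gamma$, I would track how the iterated images $\tau_\Gamma^n[0,z]$ change when $\Gamma$ is enlarged to $\Gamma'$: the key structural point is that $\tau_{\Gamma'}$ agrees with $\tau_\Gamma$ except on $[c,c')$, where $\tau_\Gamma=T_{A_1}$ but $\tau_{\Gamma'}=T_{A_0}$, and more generally the orbit of a point under $\tau_{\Gamma'}$ stays "to the left" of its orbit under $\tau_\Gamma$ in the appropriate sense. I would then combine this monotonicity of orbits with the sign information on $f_\A'$ (strictly positive on $X_{A_0}$, strictly negative on $X_{A_1}$, by Lemma~\ref{posnegf}(i)) to conclude that each of the three terms changes in a definite direction, and that the net change $\Delta_\A(\Gamma')-\Delta_\A(\Gamma)$ is strictly negative. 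An alternative, and perhaps cleaner, route is to use the already-proved Corollary~\ref{deltasigns} together with a direct differentiation: parametrising Sturmian intervals smoothly where possible and showing $\frac{d}{dc}\Delta_\A<0$ using the chain-rule expansion of $\phi_\Gamma'$ from~(\ref{defphiformula}).

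The main obstacle I anticipate is the bookkeeping in establishing the orbit-monotonicity: because $\tau_\Gamma$ is only piecewise Lipschitz and the "switch" between the $T_{A_0}$ and $T_{A_1}$ branches happens at the moving point $c_\A(\Gamma)$, one has to argue carefully that enlarging $\Gamma$ cannot cause an orbit to jump across $\Gamma$ in a way that reverses the inequality — this is really a symbolic-dynamics statement, most transparently seen through the conjugacy $h_\A$ with the shift and the classical monotonicity of the Sturmian coding (Proposition~\ref{sturmianclassical}), so I would ultimately phrase the argument on $\Omega$ and transport it back via $c_\A$ and $h_\A$. Surjectivity onto $X=[0,1]$ is then immediate: $\Delta_\A$ takes the endpoint values $\Delta_\A(\Gamma_0)>0>\Delta_\A(\Gamma_1)$ by Corollary~\ref{deltasigns}, $G_\A$ maps $\t_\A$ onto exactly $(\Delta_\A(\Gamma_1),\Delta_\A(\Gamma_0))$, and $c_\A$ is a bijection onto $X$ sending $\Gamma_0\mapsto 0$, $\Gamma_1\mapsto 1$, so the composition $c_\A\circ\Gamma_\A$ is a strictly increasing continuous surjection $\t_\A\to(0,1)$, which extends to $[0,1]$ on the closure.
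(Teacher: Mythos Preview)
Your plan diverges from the paper's, which does not attempt to prove that $c\mapsto\Delta_\A(c_\A^{-1}(c))$ is strictly monotone. Instead the paper argues directly from the relation $\Delta_\A\circ\Gamma_\A=G_\A$: since $G_\A$ is strictly decreasing it is injective, and this alone forces $\Gamma_\A$ to be injective; the paper then asserts that $\Gamma_\A$ extends continuously to $\overline{\t_\A}$, and a continuous injection between intervals is strictly monotone; finally the endpoint values $c_\A(\Gamma_\A(t_i(\A)))=i$ for $i=0,1$ pin down the direction and give surjectivity onto $X$. This is considerably shorter than what you propose, though admittedly the continuity step is dispatched with a ``clearly'' that your approach --- if it succeeded --- would make rigorous.

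Your stronger target, strict monotonicity of $\Delta_\A$ itself, would certainly suffice, but the sketch has concrete problems. First, you have the branch assignment backwards: by definition~(\ref{hybrid}), on $[c,c')$ one has $\tau_\Gamma=T_{A_0}$ and $\tau_{\Gamma'}=T_{A_1}$ (not the other way round), so $\tau_{\Gamma'}(x)\ge\tau_\Gamma(x)$ pointwise and iterated orbits under $\tau_{\Gamma'}$ lie to the \emph{right}, not the left. Second, and more seriously, even with the correct direction it is not at all clear how to combine this orbit comparison with the sign pattern of $f_\A'$ (positive on $X_{A_0}$, negative on $X_{A_1}$) to extract a definite sign for $\Delta_\A(\Gamma')-\Delta_\A(\Gamma)$: the images $\tau_\Gamma^n[0,z]$ typically straddle both $X_{A_0}$ and $X_{A_1}$, so the assertion that ``each of the three terms changes in a definite direction'' is exactly the hard part and is not justified by your outline. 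The symbolic-dynamics rescue you mention is plausible in spirit but is itself a substantial argument, and none of this is needed for the lemma --- the paper's injectivity-plus-continuity route bypasses it entirely.
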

\begin{proof}
Recall from
(\ref{gadef})
the function $G_\A$ given by
$$G_\A(t) = \log \left( \left(\frac{a_0+c_0}{b_1+d_1}\right) t^{-1} \right)\,,$$
and that $\Gamma_\A(t)\in \ss_A$ is defined
(see (\ref{deltatdefeq})) by the identity
\begin{equation*}
 \Delta_\A \circ \Gamma_\A
 = G_\A
  \,.
\end{equation*}
Now $G_\A$ is strictly decreasing, so in particular injective,
therefore the map $\Gamma_\A$ is necessarily injective. 
Note that $\Gamma_\A$ clearly extends to a continuous injection on
$\overline{\t_\A}= [t_0(\A),t_1(\A)]$, with $\Gamma_\A(t_i(\A))=\Gamma_i$ for $i\in\{0,1\}$.

Now $c_\A:\ss_\A\to X$ is a bijection,
so $c_\A\circ \Gamma_\A:\overline{\t_\A}\to X$ is injective, and its continuity means it is strictly monotone.
But
$c_\A(\Gamma_\A(t_0(\A)))=0$
and $c_\A(\Gamma_\A(t_1(\A)))=1$, so the map
$c_\A\circ\Gamma_\A$ must be strictly increasing and surjective, as required.
\end{proof}

We can now prove that the parameter map $\mathcal{P}_\A:\mathbb{R}^+\to[0,1]$ is 
singular. More specifically, its properties described by the following Theorem \ref{deviltheorem}
mean it is a \emph{devil's staircase}. These properties of the parameter map had been noted
by Bousch \& Mairesse \cite{bouschmairesse} in the context of the family (\ref{bmfamily}),
and proved in detail by Morris \& Sidorov \cite{morrissidorov} for the family (\ref{standardpair}).
The following result can be viewed as a 
more detailed version of Theorem
\ref{maintheorem} from \S \ref{generalsection}:

\begin{theorem}\label{deviltheorem}
If $\A\in\ee$ and $t\in\mathbb{R}^+$, then $\A(t)$ has a unique maximizing measure, and this maximizing measure is Sturmian. 
Let $\mathcal{P}_\A(t)$ denote the parameter of the Sturmian maximizing measure for $\A(t)$.
The parameter map $\mathcal{P}_\A:\mathbb{R}^+\to [0,1]$ 
is continuous, non-decreasing, and surjective. 
The preimage $\mathcal{P}_\A^{-1}(\mathcal{P})$ is a singleton if $\mathcal{P}$ is irrational,
and a positive-length closed interval if $\mathcal{P}$ is rational.
\end{theorem}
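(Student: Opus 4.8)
The plan is to reduce to $\A\in\nn$ and then to assemble the asserted properties of $\mathcal{P}_\A$ from the behaviour on the three ranges $(0,t_0(\A)]$, $\t_\A=(t_0(\A),t_1(\A))$, and $[t_1(\A),\infty)$. For the reduction, suppose $\A\in\ee$ is equivalent to $\A_*\in\nn$. Then for every $t$ the pair $\A(t)$ is equivalent to $\A_*(\lambda t)$ for a fixed $\lambda>0$ (the positive scalar the equivalence attaches to the second matrix), so, since equivalence leaves the maximizing measures unchanged (cf.\ \S\ref{problemsetting}), the parameter map of $\A$ is that of $\A_*$ precomposed with $t\mapsto\lambda t$ --- a rescaling that preserves continuity, monotonicity, surjectivity, and the nature of all fibres. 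So assume $\A\in\nn\subset\mm$; the opening assertion (a unique, Sturmian, maximizing measure for every $t$) is then exactly Theorem \ref{deducedthm}.

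First I would treat the extremal ranges. By Corollary \ref{t0largerttheorem}, for $t\in(0,t_0(\A)]$ the unique maximizing measure is $\delta_{p_{A_0}}$, of Sturmian parameter $0$, so $\mathcal{P}_\A\equiv 0$ there; dually, Corollary \ref{t1smallerttheorem} gives $\mathcal{P}_\A\equiv 1$ on $[t_1(\A),\infty)$. On $\t_\A$ I would invoke the factorisation $\mathcal{P}_\A=d_\A\circ c_\A\circ\Gamma_\A$ from (\ref{rfactor}). By Lemma \ref{strictinccomp} (and the continuous extension exhibited in its proof), $c_\A\circ\Gamma_\A$ is a strictly increasing homeomorphism of $\overline{\t_\A}$ onto $X$ with $c_\A(\Gamma_\A(t_i(\A)))=i$ for $i\in\{0,1\}$, while Corollary \ref{dadev} records that $d_\A:X\to[0,1]$ is continuous, non-decreasing and surjective, with singleton fibres over the irrationals and positive-length closed-interval fibres over the rationals. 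Composing, $\mathcal{P}_\A$ is continuous and non-decreasing on $\overline{\t_\A}$.

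Next I would glue the three pieces. As $t\downarrow t_0(\A)$ we have $c_\A\circ\Gamma_\A(t)\to c_\A(\Gamma_0)=0$, whence $\mathcal{P}_\A(t)\to d_\A(0)$; and $d_\A(0)$ is the parameter of the Sturmian measure carried by the extremal interval $\Gamma_0$, namely $\delta_{p_{A_0}}$, which is $0$, so this matches the constant value on $(0,t_0(\A)]$. Symmetrically $\mathcal{P}_\A(t)\to d_\A(1)=1$ as $t\uparrow t_1(\A)$, matching the constant value on $[t_1(\A),\infty)$. Hence the globally defined map $\mathcal{P}_\A:\mathbb{R}^+\to[0,1]$ is continuous and non-decreasing; and since it attains the values $0$ (at $t_0(\A)$) and $1$ (at $t_1(\A)$), the intermediate value theorem gives surjectivity.

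Finally, the fibres. Fix $\mathcal{P}\in[0,1]$. If $\mathcal{P}$ is irrational, then $\mathcal{P}\in(0,1)$, so $\mathcal{P}_\A^{-1}(\mathcal{P})\subset\t_\A$, where $\mathcal{P}_\A=d_\A\circ(c_\A\circ\Gamma_\A)$ with $c_\A\circ\Gamma_\A$ a bijection onto $X$; since $d_\A^{-1}(\mathcal{P})$ is a singleton, $\mathcal{P}_\A^{-1}(\mathcal{P})$ is its $(c_\A\circ\Gamma_\A)$-preimage, hence a singleton. If $\mathcal{P}$ is rational and in $(0,1)$, then $d_\A^{-1}(\mathcal{P})$ is a positive-length closed subinterval of $(0,1)$, so $\mathcal{P}_\A^{-1}(\mathcal{P})=(c_\A\circ\Gamma_\A)^{-1}(d_\A^{-1}(\mathcal{P}))$ is a positive-length closed subinterval of $\t_\A$. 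For $\mathcal{P}\in\{0,1\}$ one adjoins the extremal ranges: $\mathcal{P}_\A^{-1}(0)=(0,t_0(\A)]\cup(c_\A\circ\Gamma_\A)^{-1}(d_\A^{-1}(0))$ is a positive-length interval closed in $\mathbb{R}^+$, and likewise for $\mathcal{P}_\A^{-1}(1)$. I expect the one point needing genuine care to be the continuous gluing at $t_0(\A)$ and $t_1(\A)$, i.e.\ that the constant-$0$ and constant-$1$ regimes abut the $\Gamma_\A$-regime without a jump; this hinges on the continuous extension of $\Gamma_\A$ to $\overline{\t_\A}$ and on recognising the Sturmian measure supported by $\Gamma_i$ as $\delta_{p_{A_i}}$. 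The rest is a formal composition of facts already in hand.
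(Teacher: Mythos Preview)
Your proof is correct and follows essentially the same route as the paper's: reduce from $\ee$ to $\nn$, handle the extremal ranges $(0,t_0(\A)]$ and $[t_1(\A),\infty)$ via Corollaries \ref{t0largerttheorem} and \ref{t1smallerttheorem}, and on $\t_\A$ invoke the factorisation $\mathcal{P}_\A=d_\A\circ c_\A\circ\Gamma_\A$ together with Corollary \ref{dadev} and Lemma \ref{strictinccomp}. You supply more detail than the paper does --- in particular the explicit rescaling $\mathcal{P}_\A(t)=\mathcal{P}_{\A_*}(\lambda t)$ justifying the reduction, the continuous gluing at $t_0(\A)$ and $t_1(\A)$, and the case-by-case fibre analysis --- but these are elaborations of the same argument rather than a different one.
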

\begin{proof}
The set $\ee$ consists of matrix pairs which are equivalent to a matrix pair in $\nn$, so it suffices to prove the result
for $\A\in\nn$. 
Theorem \ref{deducedthm} gives that
$\A(t)$ has a unique maximizing measure, and that this maximizing measure is Sturmian.

For $t\in \mathbb{R}^+\setminus \t_\A$ we know that
\begin{equation}\label{rot0}
\mathcal{P}_\A(t)=0 \quad\text{for }t\in (0,t_0(\A))
\end{equation}
by Theorem \ref{t0largerttheorem},
and
\begin{equation}\label{rot1}
\mathcal{P}_\A(t)=1 \quad\text{for }t\in (t_1(\A),\infty) 
\end{equation}
by Theorem \ref{t1smallerttheorem}, since the Dirac measures at the fixed points
$p_{A_0}$ and $p_{A_1}$ are $\A$-Sturmian measures of parameters 0 and 1 respectively.

In view of (\ref{rot0}) and (\ref{rot1}), it suffices to 
establish the required properties of $\mathcal{P}_\A$ on the sub-interval $\t_\A=(t_0(\A),t_1(\A))$.
Using
the factorisation
(\ref{rfactor}),
we see that this follows 
from Corollary \ref{dadev}
and Lemma \ref{strictinccomp}.
\end{proof}

\end{document}